\providecommand{\tabularnewline}{\\}
\numberwithin{equation}{section}
\numberwithin{figure}{section}
\theoremstyle{plain}
\newtheorem*{thm*}{\protect\theoremname}
\theoremstyle{plain}
\newtheorem{cor}{\protect\corollaryname}
\theoremstyle{definition}
\newtheorem{defn}{\protect\definitionname}
\theoremstyle{plain}
\newtheorem{thm}{\protect\theoremname}
\theoremstyle{plain}
\newtheorem{lem}{\protect\lemmaname}
\theoremstyle{plain}
\newtheorem{prop}{\protect\propositionname}
\theoremstyle{plain}
\newtheorem*{criterion*}{\protect\criterionname}
\theoremstyle{definition}
 \newtheorem{example}{\protect\examplename}
\theoremstyle{remark}
\newtheorem{rem}{\protect\remarkname}
\newlist{casenv}{enumerate}{4}
\setlist[casenv]{leftmargin=*,align=left,widest={iiii}}
\setlist[casenv,1]{label={{\itshape\ \casename} \arabic*.},ref=\arabic*}
\setlist[casenv,2]{label={{\itshape\ \casename} \roman*.},ref=\roman*}
\setlist[casenv,3]{label={{\itshape\ \casename\ \alph*.}},ref=\alph*}
\setlist[casenv,4]{label={{\itshape\ \casename} \arabic*.},ref=\arabic*}
\DeclareRobustCommand{\prescript}[3]{%
  \@mathmeasure\z@\scriptstyle{#1}
  \@mathmeasure\tw@\scriptstyle{#2}
  \ifdim\wd\tw@>\wd\z@
    \setbox\z@\hbox to\wd\tw@{\hfil\unhbox\z@}%
  \else
    \setbox\tw@\hbox to\wd\z@{\hfil\unhbox\tw@}%
  \fi
  \mathop{}%
  \mathopen{\vphantom{#3}}^{\box\z@}_{\box\tw@}%
  #3%
}
\providecommand{\casename}{Case}
\providecommand{\corollaryname}{Corollary}
\providecommand{\criterionname}{Criterion}
\providecommand{\definitionname}{Definition}
\providecommand{\examplename}{Example}
\providecommand{\lemmaname}{Lemma}
\providecommand{\propositionname}{Proposition}
\providecommand{\remarkname}{Remark}
\providecommand{\theoremname}{Theorem}
\begin{document}
\global\long\def\Der{\textup{Der}}%
\global\long\def\comp{\textup{Comp}}%
\global\long\def\CC{\mathbb{C}}%
\global\long\def\top{\textup{Top}^{\bullet}\left(S\right)}%
\global\long\def\moduli{\mathbb{M}^{\bullet}\left(S\right)}%

\title{The Saito module and the moduli of a germ of curve in $\left(\mathbb{C}^{2},0\right)$.}
\author{Yohann Genzmer}
\begin{abstract}
This article proposes to study the moduli space of a germ of curve
$S$ in the complex plane, that is to say the equisingularity class
of $S$ up to analytical equivalence relation. The first part is devoted
to proving that this last quotient can be endowed with a reasonable
complex structure, yet not canonical. The second part deals with the
computation of its generic dimension in terms of topological invariants
of S. It can be obtained from the study of the valuations of the Saito
module of S, $\textup{Der}\left(\log S\right)$, i.e. the module of
vector fields tangent to $S$.
\end{abstract}

\maketitle

\section*{Introduction.}

The number of moduli of a germ of curve $S$ in $\left(\mathbb{C}^{2},0\right)$
is basically the number of parameters on which depend a topologically
miniversal family for $S.$ It is also the \emph{generic dimension}
of the quotient of the topological class of $S$ up to analytical
equivalence relation, provided that this quotient admits a structure
from which a notion of dimension can be derived. Indeed, this moduli
space defined by the quotient of the topological class of $S$
\[
\left\{ \left.S^{\prime}\right|S^{\prime}\sim_{\textup{top}}S\right\} 
\]
by the following action of $\textup{Diff}\left(\mathbb{C}^{2},0\right)$
\[
\phi\cdot S^{\prime}=\phi\left(S^{\prime}\right),\quad\phi\in\textup{Diff}\left(\mathbb{C}^{2},0\right)
\]
\emph{a priori} has\emph{ }no particular structure beyond being a
set. 

The first determination of such a number of moduli goes back to the
work of Sherwood Ebey in 1965 \cite{MR0176983} who dealt with the
irreducible curves - those having only one irreducible component.
Ebey proved that the moduli space of $S$ carries a complex structure
compatible with a non separated topology and computed the number of
moduli for a particular topological class of curve, namely, that given
by the equation $y^{5}=x^{9}$. In 1973, in \cite{zariski}, Oscar
Zariski proposed various approaches to get the number of moduli for
irreducible curves beyond the case treated previously by Ebey. He
introduced most of the concepts on which future work will be based.
In 1978, Delorme \cite{Delorme1978} studied extensively the case
of an irreducible curve with one Puiseux pair. In 1979, Granger \cite{Granger}
and later, in 1988, Briançon, Granger and Maisonobe \cite{MR922433}
produced an algorithm to compute the number of the moduli of a non
irreducible quasi-homogeneous curve. In 1988, Laudal, Martin and Pfister
in \cite{MR1101844}, improved the work of Delorme and gave an explicit
description of a miniversal family. From 2009, in a series of papers
\cite{MR2509045,MR2781209,MR2996882}, Abramo Hefez and Marcelo Hernandes
greatly improved the previous studies and achieved the analytical
classification of irreducible curves. Their algorithmic approach provided
in particular the number of moduli.

In 2010 and 2011, in \cite{MR2808211,PaulGen}, Emmanuel Paul and
the author described the moduli space of a topologically quasi-homogeneous
curve $S$ as the spaces of leaves of an algebraic foliation defined
on the moduli space of a foliation whose analytic invariant curve
is precisely $S.$ This work initiated an approach based on the theory
of foliations. In 2019, in \cite{YoyoBMS}, the author gave an explicit
formula for the number of moduli for a curve $S$,\emph{ generic}
in its topological class : this formula involves only very elementary
topological invariants of $S$, such as, the topological class of
its desingularization.

The aim of this article is to investigate the full general case, that
is the number of moduli of a germ of curve in the complex plane. We
emphasize that our objective is far from being as ambitious as a complete
analitycal classification, which would require at least some deep
algorithmic procedures, but is rather to obtain a \emph{geometric}
interpretation of these moduli and a procedure to calculate their
number from primitive topological invariants.

This work follows the ideas introduced in \cite{YoyoBMS} and illustrated
in \cite{MR4082253}, which focus on the irreducible case.

Section 1 establishes an extension of the result of Ebey \cite[Theorem 4]{MR0176983}
to the non irreducible curve : it concerns the structure of the moduli
space. As noticed by Ebey himself at the end of its article, its \emph{machinery}
derives from the theory of algebraic groups and depends on the groups
being solvable and \emph{connected}. Therefore, it cannot be directly
carried over to several component curves. Here, we overcome this issue
by considering not only curves but curves enriched with a \emph{marking
}which allows us to recover the necessary connexity. As Ebey, we use
an adapted complete topological invariant - the semi ring of values
- introduced by R. Waldi \cite{Waldi} and some of its properties
identified by M. Hernandes and E. de Carvalho in \cite{Hernandes.semiring}.
Finally, we obtain the following result
\begin{thm*}
The marked moduli space $\moduli$ of a germ of curve $S$ in $\mathbb{C}^{2}$,
that is its marked topological class up to analytical marked equivalence
relation, can be identified with the quotient of a complex constructible
set by an action of a connected solvable algebraic group. In particular,
it is endowed with a non separated complex structure.
\end{thm*}
Notice that passing from the moduli space to the \emph{marking} moduli
space has no effect on the generic dimension.

Section 1 can be read independently from the rest of the article.

Section 2 and 3 aim to develop the study of the module $\textup{Der}\left(\log S\right)$
of vector fields tangent to $S,$ on which depends the computation
of the number of moduli of $S.$ The starting point is a remark of
K. Saito in \cite{MR586450}, that, highlighted the freeness of this
module - which is specific to the curves embedded in the complex plane.
An immediate consequence of the work of Saito is that, the smallest
valuation of the vector fields in $\textup{Der}\left(\log S\right)$
cannot be too big compared to the valuation of $S,$ namely, the following
upper bound holds 
\[
\min_{X\in\textup{Der}\left(\log S\right)}\nu\left(X\right)\leq\frac{\nu\left(S\right)}{2}.
\]
Our purpose is to prove that, \emph{generically, }this bound is essentially
reached. In section 2, the existence of a \emph{flat} basis of $\textup{Der}\left(\log S\right)$
is shown in the generic situation, that is, a basis admitting an analytic
extension as a basis for the modules $\textup{Der}\left(\log C\right)$
where $C$ are in a \emph{neighborhood} of $S$ in $\moduli$. As
a consequence, using the theory of infinitesimal deformations of foliations
of X. G\'{o}mez-Mont \cite{Gomez}, we obtain the following theorem
\begin{thm*}
For $S$ generic in its moduli space $\moduli$, one has 
\[
\min_{X\in\textup{Der}\left(\log S\right)}\nu\left(X\right)\geq\left\{ \begin{array}{ll}
\left\lfloor \frac{\nu\left(S\right)}{2}\right\rfloor  & \textup{ if }S\textup{ is not of radial type}\\
\\
\left\lceil \frac{\nu\left(S\right)}{2}\right\rceil -1 & \textup{else}
\end{array}\right..
\]
\end{thm*}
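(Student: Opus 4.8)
The plan is to combine the two inputs emphasised in the introduction: the flat basis of $\Der\left(\log S\right)$ available for generic $S$ (Section~2) and G\'{o}mez-Mont's deformation theory, turning the analytic question into a lifting problem on a resolution. Write $d=\nu\left(S\right)$ and fix a reduced equation $f$ with tangent cone $f_{d}$. First I record the easy half. By Saito's criterion a basis $X_{1},X_{2}$ of $\Der\left(\log S\right)$ has $\det\left(X_{1},X_{2}\right)=u\,f$ with $u$ a unit, so $\nu\left(X_{1}\right)+\nu\left(X_{2}\right)\leq\nu\left(\det\right)=d$; since for any basis $\min_{X}\nu\left(X\right)=\min\left(\nu\left(X_{1}\right),\nu\left(X_{2}\right)\right)$, ordering the two exponents gives $\min_{X}\nu\left(X\right)\leq d/2$. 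As this minimum is an integer it is automatically at most $\left\lfloor d/2\right\rfloor$; hence the theorem is equivalent to the lower bound, namely that for generic $S$ there is no tangent vector field of valuation strictly below $\left\lfloor d/2\right\rfloor$ (resp. below $\left\lceil d/2\right\rceil-1$ in the radial case).

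The heart of the matter is a lifting/obstruction analysis. A field $X\in\Der\left(\log S\right)$ with $\nu\left(X\right)=k$ has a homogeneous leading part $X_{k}$ tangent to the tangent cone $\left\{ f_{d}=0\right\}$, and proving the lower bound means controlling, for generic $S$, the least $k$ for which such an $X_{k}$ genuinely lifts to a field tangent to $S$ and not merely to its tangent cone. I would pass to the reduction of singularities $E\colon\left(\widetilde{M},D\right)\to\left(\CC^{2},0\right)$ and lift the foliation $\mathcal{F}_{X}$ defined by $X$; the valuation $k$ is then read off from the multiplicity of $\mathcal{F}_{X}$ along the first exceptional component, whose transformation law under blow-up already exhibits the dichotomy of the statement. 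If the first divisor is non-dicritical the balanced exponent $\left\lfloor d/2\right\rfloor$ is the relevant threshold, whereas if it is dicritical the leading part is forced to be a multiple of the radial field $x\,\partial_{x}+y\,\partial_{y}$ (always tangent to $f_{d}$), which can lift one degree lower and produces the radial value $\left\lceil d/2\right\rceil-1$, effective only for even $d$ since for odd $d$ the two expressions coincide. Here the flat basis is indispensable: it provides an analytic family $\mathcal{F}_{X\left(C\right)}$ parametrised by a neighbourhood of $S$ in $\moduli$, all sharing the invariant-curve type, so that the minimal valuation becomes an invariant of the family and I may invoke G\'{o}mez-Mont's identification of the infinitesimal deformations of $\mathcal{F}_{X}$ with $H^{1}$ of the sheaf of vector fields tangent to $\mathcal{F}_{X}$ on $\widetilde{M}$.

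With this dictionary, the final step is the genericity computation. The existence of a tangent field of valuation $<\left\lfloor d/2\right\rfloor$ is the solvability of a linear system polynomial in the coefficients of $f$, hence a constructible condition on the topological class; it therefore suffices to show that the generic member does not satisfy it, which I would establish by showing that the flat family realises enough deformations to make the obstruction to lifting sub-threshold leading parts maximal. I expect this to be the main obstacle: proving that for the generic curve the relevant $H^{1}$, read through the index data (Camacho--Sad and, in the dicritical case, the radial index) of $\mathcal{F}_{X}$ along $D$, obstructs every lift below the balanced degree. Closing this step requires combining the flatness of the basis with a careful bookkeeping of these indices across the tower of blow-ups, and tracking the radial/non-radial dichotomy consistently so that the two branches of the stated bound emerge with the correct parity behaviour.
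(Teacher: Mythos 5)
Your plan assembles the right ingredients---single blow-up, the flat Saito basis of Theorem \ref{Flat.saito.basis}, G\'{o}mez-Mont's deformation theory, and the dicritical/non-dicritical dichotomy---and in that sense it is aimed in the same direction as the paper. But as written it has a genuine gap, which you yourself flag: the obstruction argument is never constructed, and the genericity step is logically inverted. Observing that ``there exists $X\in\Der\left(\log S\right)$ with $\nu\left(X\right)<\left\lfloor \nu\left(S\right)/2\right\rfloor $'' is a constructible condition does not help: showing that the generic curve fails it \emph{is} the theorem, so constructibility is circular as a proof strategy. The paper's mechanism is the opposite of ``making the obstruction maximal'': one introduces the operator $\mathfrak{B}\left(T\right)=L_{T}\Omega\wedge\Omega$ on $\Theta_{S}$ (Lemma \ref{lemma.inclusion.sheaves}), proves that for generic $S$ the induced map $H^{1}\left(\mathcal{M},\Theta_{S}\right)\to H^{1}\left(\mathcal{M},\Omega^{2}\left(-\overline{n}D-S^{E}\right)\right)$ is the \emph{zero} map (Lemma \ref{lem:vanishing-map}, where the flat basis enters exactly once, to make $\mathbb{H}^{1}\left(\mathcal{M},\Theta_{X_{1}}\right)\to H^{1}\left(\mathcal{M},\Theta_{S}\right)$ onto in G\'{o}mez-Mont's exact sequence), and then exhibits an explicit meromorphic section $T$ (e.g.\ $\frac{x_{1}}{y_{1}}\partial_{y_{1}}$) whose image under $\frac{1}{f_{1}}\mathfrak{B}$ is a cocycle with nonvanishing residue $a_{0,-1}$; the forced triviality of that cocycle plus the residue computation of Lemma \ref{lem:condition-vanish} yields $\overline{n}\geq\nu\left(S\right)$, hence the valuation bound. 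None of this---the test sections, the residue criterion, or the vanishing lemma---appears in your sketch, and ``careful bookkeeping of Camacho--Sad indices across the tower of blow-ups'' is not a substitute: the paper never leaves the first blow-up in this proof.

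A second, independent gap: your dichotomy (``non-dicritical first divisor gives $\left\lfloor \nu\left(S\right)/2\right\rfloor $'') silently assumes the non-dicritical case delivers the balanced bound directly. It does not. The residue argument in the non-dicritical case only gives $\nu\left(X_{1}\right)\geq\frac{\nu\left(S\right)-1}{2}$, and it degenerates entirely when, at every singular point of $X_{1}^{E}$, the local exponents satisfy $2a=b$---which can only happen when $\nu\left(S\right)$ is even. That residual case ($\nu\left(S\right)$ even, $S$ not of radial type) is exactly where the paper needs a separate and rather delicate argument: adjoin two generic smooth lines $l_{1},l_{2}$ via the construction (\ref{ajouter.une.courbe}), apply the already-proved odd-multiplicity case to $S\cup l_{1}$ and $S\cup l_{1}\cup l_{2}$, and derive a contradiction from the Saito criterion and divisibility by $L_{1}L_{2}$. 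Your proposal has no mechanism at all for this case, so even if the $H^{1}$ machinery were filled in, the stated bound for even $\nu\left(S\right)$ and non-radial $S$ would remain unproved.
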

The definition of $S$ being of \emph{radial type }will be given in
the article. Note that if is $S$ is not generic in its moduli space,
the above lower bound is false, as it will be illustrated by some
examples in the article. Moreover,\emph{ }in section 3, we proceed
with the precise description of the various possibilities for the
flat basis of $\textup{Der}\left(\log S\right)$.

Finally, Section 4 illustrates our approach for the computation of
the generic dimension of $\moduli$. As a consequence of section 2
and 3, we recover the classical dimension of the moduli space of the
singularity $x^{n}+y^{n}=0$ with $n\geq1.$
\begin{cor}[\cite{Granger}]
The generic dimension of $\moduli$ where 
\[
S=\left\{ x^{n}+y^{n}=0\right\} 
\]
 is equal to 
\[
\left\{ \begin{array}{ll}
\frac{\left(n-2\right)^{2}}{4} & \textup{ if \ensuremath{n} is even}\\
\frac{\left(n-1\right)\left(n-3\right)}{4} & \text{\textup{ if \ensuremath{n} is odd.}}
\end{array}\right.
\]
\end{cor}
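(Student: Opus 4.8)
The plan is to feed the specific curve into the machinery of Sections~2--4 by first recognising its topological type. Since $x^{n}+y^{n}$ is a product of $n$ distinct linear forms, $S$ is an ordinary $n$-fold point: $n$ smooth pairwise transverse branches, with multiplicity $\nu\left(S\right)=n$ and a minimal resolution consisting of a single blow-up, after which the $n$ branches meet the exceptional divisor transversally at distinct points. First I would check, from the definition given in the body, that this topological class is \emph{not} of radial type, so that for a generic $C$ in the class the first alternative of the Section~2 theorem applies. This non-radiality is not a cosmetic point: for even $n$ the radial alternative would only give the lower bound $\left\lceil \frac{n}{2}\right\rceil -1=\frac{n}{2}-1$, which is too weak to reproduce Granger's value.

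Next I would pin down the valuations of a flat (Saito) basis $X_{1},X_{2}$ of $\Der\left(\log C\right)$ for generic $C$, with $\nu_{1}=\nu\left(X_{1}\right)\le\nu_{2}=\nu\left(X_{2}\right)$. Saito's criterion gives $\det\left(X_{1},X_{2}\right)=u\cdot f$ with $f$ a reduced equation of $C$ and $u$ a unit, whence $\nu_{1}+\nu_{2}\le\nu\left(\det\right)=\nu\left(C\right)=n$ and in particular $\nu_{1}\le\left\lfloor \frac{n}{2}\right\rfloor$. The Section~2 theorem (non-radial case) furnishes the matching lower bound $\nu_{1}\ge\left\lfloor \frac{\nu\left(S\right)}{2}\right\rfloor =\left\lfloor \frac{n}{2}\right\rfloor$, so $\nu_{1}=\left\lfloor \frac{n}{2}\right\rfloor$ exactly. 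Invoking the precise description of the flat basis from Section~3, which for this class realises $\nu_{1}+\nu_{2}=n$ with no cancellation in the determinant, I obtain the generic valuation pair
\[
\left(\nu_{1},\nu_{2}\right)=\left(\left\lfloor \tfrac{n}{2}\right\rfloor ,\ \left\lceil \tfrac{n}{2}\right\rceil \right).
\]

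It remains to convert this valuation data into the generic dimension of $\moduli$. I anticipate that the dimension count produced in Section~4 reduces, for a resolution with a single exceptional component, to the product
\[
\dim\moduli=\left(\nu_{1}-1\right)\left(\nu_{2}-1\right)=\nu_{1}\nu_{2}-\nu\left(S\right)+1,
\]
measuring the first-order equisingular deformations of $S$ modulo those induced by ambient vector fields. Substituting the valuation pair and splitting on the parity of $n$ then yields, for $n$ even, $\left(\frac{n}{2}-1\right)^{2}=\frac{\left(n-2\right)^{2}}{4}$, and for $n$ odd, $\frac{n-3}{2}\cdot\frac{n-1}{2}=\frac{\left(n-1\right)\left(n-3\right)}{4}$, which are exactly the stated values.

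The main obstacle is the interplay between the radial dichotomy and the borderline even case: one must confirm that the ordinary $n$-fold point is genuinely non-radial, so that the Section~2 lower bound $\left\lfloor \frac{n}{2}\right\rfloor$ applies and meets the Saito upper bound, forcing $\nu_{1}=\left\lfloor \frac{n}{2}\right\rfloor$ rather than $\frac{n}{2}-1$. A secondary point is verifying that the general dimension count of Section~4, a priori a sum of local contributions along the resolution tree, collapses to the single product $\left(\nu_{1}-1\right)\left(\nu_{2}-1\right)$ precisely because the resolution here has a unique exceptional divisor, and that the flat basis of Section~3 indeed achieves $\nu_{1}+\nu_{2}=n$ without a drop. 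Once these two facts are in place the parity computation is immediate.
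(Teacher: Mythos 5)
Your argument rests on the premise that the ordinary $n$-fold point is not of radial type, and this premise is false: the paper proves exactly the opposite. For a generic curve $C$ in this topological class with $n\geq 5$, Proposition \ref{thm.one.blowing.up} shows that \emph{every} optimal vector field is dicritical. Indeed, if an optimal $X_{1}$ were non dicritical, the index formula of \cite{hertlingfor} gives $\nu\left(X_{1}\right)+1=\sum_{c\in D}\textup{ind}\left(X_{1}^{E},D,c\right)$, and since $X_{1}^{E}$ must be singular at each of the $n$ points of the tangent cone, this sum is at least $n$, which contradicts the optimality bound $\nu\left(X_{1}\right)\leq\frac{n}{2}$ as soon as $n\geq5$ (the cases $n=3,4$ are treated by explicit bases and are radial as well). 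Consequently the adapted Saito basis is of type $\left(\mathfrak{E}_{d}\right)$ or $\left(\mathfrak{O}_{d}\right)$: both generators are dicritical, their initial parts are multiples of $x\partial_{x}+y\partial_{y}$, so the wedge of these initial parts vanishes and $\nu_{1}+\nu_{2}=n-1$, not $n$. Concretely $\left(\nu_{1},\nu_{2}\right)=\left(\frac{n}{2}-1,\frac{n}{2}\right)$ for $n$ even and $\left(\frac{n-1}{2},\frac{n-1}{2}\right)$ for $n$ odd. So your lower bound $\nu_{1}\geq\lfloor n/2\rfloor$ (false for $n$ even), your claim of ``no cancellation in the determinant'' (false for both parities), and your worry that the radial alternative would be ``too weak'' are all misdirected: the radial alternative is precisely what occurs, and Granger's value is recovered through it.

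The second gap is the dimension formula. Nothing in the paper gives $\dim\moduli=\left(\nu_{1}-1\right)\left(\nu_{2}-1\right)$; Proposition \ref{dimension.coho} gives $\frac{\left(\nu_{1}-1\right)\left(\nu_{1}-2\right)}{2}+\frac{\left(\nu_{2}-1\right)\left(\nu_{2}-2\right)}{2}$ when $\nu_{1}+\nu_{2}=\nu\left(S\right)$, and an additional term $\nu\left(S\right)-2-\nu_{0}$ when $\nu_{1}+\nu_{2}=\nu\left(S\right)-1$, where $\nu_{0}$ is the valuation of the gcd of the initial parts. The present curve falls in the second case, and computing $\nu_{0}=1$ is exactly the content of the rest of Proposition \ref{thm.one.blowing.up}: $X_{1}^{E}$ is regular along $D$ and has $\lceil n/2\rceil-2$ free points. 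Your final numbers come out right only because two errors cancel: inserting the true valuations into your product formula gives $\frac{\left(n-2\right)\left(n-4\right)}{4}$ for $n$ even, and inserting your valuations into the paper's first-case formula gives the same wrong value $\frac{\left(n-2\right)\left(n-4\right)}{4}$; only the pairing of the inflated valuations with the unproved product formula reproduces $\frac{\left(n-2\right)^{2}}{4}$ (and similarly for $n$ odd, where both repaired versions give $\frac{\left(n-3\right)^{2}}{4}$ instead of $\frac{\left(n-1\right)\left(n-3\right)}{4}$). Hence neither half of your argument can be fixed in isolation; the proof has to go through the dicritical analysis, the drop $\nu_{1}+\nu_{2}=n-1$, and the corrected cohomological count with its free-point term, as in the paper.
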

In an upcoming article, we will build an algorithm based upon the
results presented here, that computes the generic dimension of the
moduli space for more general curves. We implemented, among other
procedures, this algorithm on Sage 9.{*}. See the routine \emph{Courbes.Planes}
following the link
\begin{quote}
https://perso.math.univ-toulouse.fr/genzmer/
\end{quote}

\section{Moduli space of marked curve.}

Throughout this article, $S$ stands for a germ of singular curve
in the complex plane $\left(\CC^{2},0\right)$. In particular, its
algebraic valuation is at least $2.$ From now on, we fix a decomposition
of $S$ in irreducible components 
\[
S=S^{1}\cup\cdots\cup S^{r}
\]
where $r$ is the number of irreducible components. Here and subsequently,
$\comp\left(S\right)$ stands for the set of the irreducible components
of $S$.

Let $C$ be a germ of curve topologically equivalent to $S$ by a
germ of homeomorphism of the ambient space $\left(\mathbb{C}^{2},0\right)$
denoted by $h$ and such that 
\[
h\left(S\right)=C.
\]

The application $h$ induces a bijective map 
\[
\sigma_{h}:\comp\left(S\right)\to\comp\left(C\right)
\]
Two such homeomorphisms $h$ and $h^{\prime}$ are said to be \emph{equivalent}
if and only if
\begin{equation}
\sigma_{h}=\sigma_{h^{\prime}}.\label{relation.equivalence.homeo}
\end{equation}

\begin{defn}
A curve \emph{marked by $S$} is a couple $\left(C,\overline{h}\right)$
where $C$ is curve topologically equivalent to $S$ and $\overline{h}$
a class of homeomorphism between $C$ and $S$ for the equivalence
relation defined above. We will denote by $\textup{Top}^{\bullet}\left(S\right)$
the set of curves marked by $S.$
\end{defn}
The group $\textup{Diff}\left(\mathbb{C}^{2},0\right)$ of germs of
automorphisms of the ambient space $\left(\CC^{2},0\right)$ acts
on the set $\textup{Top}^{\bullet}\left(S\right)$ by
\[
\phi\cdot\left(C,\overline{h}\right)=\left(\phi\left(C\right),\overline{\phi\circ h}\right).
\]
In what follows, the quotient of $\textup{Top}^{\bullet}\left(S\right)$
by $\textup{Diff}\left(\mathbb{C}^{2},0\right)$ will be denoted by
\[
\mathbb{M}^{\bullet}\left(S\right)
\]
and will be refered to as \emph{the marked moduli space of $S.$ }Although
$\moduli$ cannot be endowed with a complex structure by some general
statements about group actions, the result below provides such a structure.
Indeed, generalizing a result of Ebey \cite{MR0176983}, we obtain
the
\begin{thm}
\label{theorem.structure.moduli}The quotient $\mathbb{M}^{\bullet}\left(S\right)$
can be identified with the quotient of a complex constructible set
by an action of a connected solvable algebraic group.
\end{thm}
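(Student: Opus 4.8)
The plan is to realize $\moduli$ as a quotient in concrete, algebraic terms using a complete topological invariant that behaves well under the marking. Following the strategy that Ebey used in the irreducible case, I would first fix a normal form for the objects being parametrized. Concretely, I would attach to each marked curve $\left(C,\overline{h}\right)$ a system of Puiseux-type parametrizations of its components, labelled in the order prescribed by the marking $\sigma_{h}$. The marking is exactly what removes the ambiguity of permuting the branches, so that the data attached to a marked curve lives in an honest affine space of truncated parametrization coefficients rather than in a quotient by the symmetric group. This is the structural point the introduction flags: the permutation action destroys connectedness, and recording $\sigma_{h}$ restores it.

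Next I would cut out the constructible set. Using the semiring of values of Waldi, together with the properties established by Hernandes and de~Carvalho that I may assume, the equisingularity type of $S$ (and hence the common topological type of every $C$ in the class) is encoded by a finite combinatorial datum: the value semiring, the contact orders between components, and the characteristic exponents. I would show that the locus of coefficient data whose associated curve has \emph{exactly} this semiring of values is a constructible subset of the affine space of parametrizations, defined by the vanishing of the coefficients below the required valuations together with the non-vanishing of the coefficients that must be present to realize each prescribed value. The Zariski-open conditions (nonzero leading coefficients producing the correct contacts) intersected with the Zariski-closed conditions (forced vanishing) give constructibility. This step replaces Ebey's single-branch bookkeeping with the multi-branch semiring data, and it is where the properties of the semiring from \cite{Waldi,Hernandes.semiring} are essential.

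The third step is to identify the group and its action. The relation $\phi\cdot\left(C,\overline{h}\right)=\left(\phi\left(C\right),\overline{\phi\circ h}\right)$ shows that two data points give the same marked moduli class precisely when they differ by the action of $\textup{Diff}\left(\CC^{2},0\right)$ on parametrizations. I would filter $\textup{Diff}\left(\CC^{2},0\right)$ by the order of jets and argue that, once we restrict to the constructible set fixing the equisingularity type, only a finite-dimensional jet of each $\phi$ acts effectively on the truncated data; the higher-order part acts trivially on the relevant coefficients. This produces a finite-dimensional algebraic group $G$ acting algebraically on the constructible set, with $\moduli$ its orbit space. The key is to verify that $G$ is \emph{connected and solvable}: connectedness comes from $\textup{Diff}\left(\CC^{2},0\right)$ being connected (it contracts to the identity by scaling jets) combined with the marking forbidding any discrete permutation factor, and solvability comes from the fact that the effective action is through an upper-triangular action on coefficients, the reparametrization-and-change-of-coordinates group being built from the unipotent part (higher-order terms) and a torus of leading coefficients.

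The main obstacle I anticipate is precisely the verification that the effective group is connected and solvable while still acting algebraically on a genuinely finite-dimensional constructible set. In the reducible case one must simultaneously change coordinates in the ambient $\left(\CC^{2},0\right)$ and reparametrize each branch, and the interaction between the global ambient diffeomorphism and the per-branch reparametrizations is where solvability could fail. I would address this by carefully choosing the jet-truncation level $N$ (determined by the conductor of the value semiring, so that the curve is analytically determined by its $N$-jet), showing that above $N$ the action is trivial, and then exhibiting an explicit filtration of $G$ whose successive quotients are vector groups or tori, which yields solvability; connectedness then follows because each graded piece is connected and the marking has eliminated the finite quotient by $\mathfrak{S}_{r}$ that would otherwise appear. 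The final identification of $\moduli$ with the orbit space, and the resulting non-separated complex structure, then follows formally from the quotient of a constructible set by a connected solvable algebraic group.
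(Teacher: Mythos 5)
Your overall plan follows Ebey's strategy, but it breaks down at exactly the step you flag as the main obstacle: the group you construct is not solvable. In your setup the data are truncated parametrizations of the branches, so two data points represent the same point of $\moduli$ precisely when they differ by the combined action of ambient diffeomorphisms and per-branch reparametrizations; after jet truncation this group is (a quotient of) $J^{N}\textup{Diff}\left(\CC^{2},0\right)\times\prod_{i}J^{N_{i}}\textup{Diff}\left(\CC,0\right)$. The linear parts of the ambient factor form $GL_{2}\left(\CC\right)$, not a torus: your claim that the leading coefficients contribute ``a torus'' is true only for the one-variable reparametrization factors. Since $GL_{2}\left(\CC\right)$ preserves your constructible set (the semiring of values is a topological invariant, hence invariant under linear maps) and acts on the parametrization coefficients with essentially trivial kernel --- the kernel is a normal subgroup of $GL_{2}\left(\CC\right)$, and it cannot contain $SL_{2}\left(\CC\right)$ because a nontrivial linear map does not fix every curve of the topological class up to reparametrization --- the effective transformation group contains a non-solvable, almost simple quotient of $SL_{2}\left(\CC\right)$. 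No filtration with vector-group or torus quotients can exist for such a group. This is not a cosmetic defect: the complex structure on the quotient comes from the existence of a complete constructible transversal, which is exactly where connectedness and solvability of the acting group are used, so your argument cannot reach the stated conclusion.

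The paper circumvents this with a step that is absent from your proposal and is the heart of the proof: Lemma \ref{LemmeIsoAnneau}. Instead of parametrizing marked curves by Puiseux coefficients of their branches, one parametrizes them by the coefficients of a normalized pair of generators $G^{1},G^{2}$ of the local ring $\mathcal{O}_{C}$, viewed inside $\left(\CC\left[\left[t\right]\right]\right)^{r}$ through the embedding $\mathfrak{E}_{\left(C,\overline{h}\right)}$; uniqueness of this normal form is forced by the $\Gamma$-reductions relative to the semiring of values, and the truncation level is controlled by the conductor. Lemma \ref{LemmeIsoAnneau} then states that two marked curves are analytically equivalent (respecting markings) if and only if the images of their ring embeddings are conjugate by a \emph{diagonal} automorphism of $\left(\CC\left[\left[t\right]\right]\right)^{r}$. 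This eliminates $\textup{Diff}\left(\CC^{2},0\right)$ --- and with it $GL_{2}\left(\CC\right)$ --- from the picture entirely: the only residual group is $\left(\textup{Diff}\left(\CC,0\right)\right)^{r}$, acting by right composition followed by renormalization, and this action factors through $\prod_{i=1}^{r}\textup{Diff}^{\sigma_{i}}\left(\CC,0\right)$, a group of truncated one-variable jets which is genuinely connected and solvable. Without this ring-theoretic reduction (or some equivalent device killing the ambient linear group), the solvability required by the theorem cannot be obtained.
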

This result still holds if we drop the assumption of $S$ being a
\emph{plane }curve, once we replace the topological equivalence by
the equisingularity which corresponds to the equality of the semirings
of valuations as definded in \emph{\cite{Hernandes.semiring}}. Since
the general proof consists at most in increasing the complexity of
the notations, we state Theorem \ref{theorem.structure.moduli} and
prove it only for a curve embedded in the complex plane. We follow
Theorem 5 in \cite{MR0176983} observing that a connected solvable
algebraic action on a complex constructible set admits a complete
transversal, that is a constructible subset in correspondance one
to one with the orbits of the action. Thus, from Theorem \ref{theorem.structure.moduli},
$\moduli$ inherits of the complex structure of this transversal.
Its compatible topology is just the quotient topology : in most case,
it is not separated (see for instance \cite{MR2509045,MR2781209}).

The goal of the current section is to prove Theorem \ref{theorem.structure.moduli}.

\subsection{The ring of of functions of $\left(C,\overline{h}\right)$}

Let $\left(C,\overline{h}\right)$ be in $\top$ and 
\[
\gamma_{C}=\left\{ \gamma_{c}:t\in\left(\CC,0\right)\to\left(\CC^{2},0\right)\right\} _{c\in\textup{Comp}\left(C\right)}
\]
be any system of parametrizations of the irreducible components of
$C.$ We denote by $C_{i}$ the component of $C$ defined by the marking
$\overline{h}$
\[
C_{i}=\sigma_{h}\left(S^{i}\right).
\]

From the marking $\overline{h}$ of $C$, we derived a morphism of
rings defined by 
\[
\left\{ \begin{array}{ccc}
\mathbb{C}\left[\left[x,y\right]\right] & \to & \left(\mathbb{C}\left[\left[t\right]\right]\right)^{r}\\
u & \mapsto & \left(\gamma_{C_{i}}^{\star}u\right)_{i=1,\ldots,r}
\end{array}\right..
\]
which factorizes in an monomorphism 
\begin{equation}
\mathfrak{E}_{\left(C,\overline{h}\right)}:\mathcal{\hat{O}}_{C}=\frac{\mathbb{C}\left[\left[x,y\right]\right]}{\left(f\right)}\hookrightarrow\left(\mathbb{C}\left[\left[t\right]\right]\right)^{r}\label{epimorpshims}
\end{equation}
where $f$ is any reduced equation of $C$ and $\mathcal{\hat{O}}_{C}$
is the completion of $\mathcal{O}_{C}=\frac{\CC\left\{ x,y\right\} }{\left(f\right)}.$

The following result is classic - see \cite{MR0176983} for the irreducible
case. 
\begin{lem}
\label{LemmeIsoAnneau}Let $\left(C,\overline{h}\right)$ and $\left(C^{\prime},\overline{h^{\prime}}\right)$
be two marked curves in $\top.$ The following propertie are equivalent
\begin{enumerate}
\item The curves $\left(C,\overline{h}\right)$ and $\left(C^{\prime},\overline{h^{\prime}}\right)$
are analytically equivalent by a conjugacy preserving the markings.
\item The images of the monomorphisms (\ref{epimorpshims}) associated to
both curves are conjugated by a diagonal formal automorphism of $\left(\mathbb{C}\left[\left[t\right]\right]\right)^{r}$.
\end{enumerate}
\end{lem}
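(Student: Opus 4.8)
The plan is to treat the two implications separately: the passage from analytic equivalence to conjugacy of images is essentially formal, whereas the converse rests on the rigidity of plane curve singularities and will carry the real difficulty.

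For the implication $(1)\Rightarrow(2)$, I would start from an automorphism $\Phi\in\textup{Diff}\left(\CC^{2},0\right)$ realizing the marked equivalence, so that $\Phi\left(C\right)=C'$ and $\Phi$ carries the marking of $C$ onto that of $C'$; concretely, $\Phi\left(C_{i}\right)=C'_{i}$ for every $i$. The key elementary fact to invoke is that any two parametrizations of a fixed irreducible germ differ by a reparametrization, i.e. by an automorphism of $\left(\CC,0\right)$. Applying this to the two parametrizations $\Phi\circ\gamma_{C_{i}}$ and $\gamma_{C'_{i}}$ of the branch $C'_{i}$, one obtains reparametrizations $\psi_{i}$ with $\Phi\circ\gamma_{C_{i}}=\gamma_{C'_{i}}\circ\psi_{i}$. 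A direct computation then shows that the diagonal automorphism $\Psi=\left(\psi_{1},\dots,\psi_{r}\right)$ conjugates the two images: for any $v\in\CC\left[\left[x,y\right]\right]$ one has $\psi_{i}^{\star}\left(\gamma_{C'_{i}}^{\star}v\right)=\gamma_{C_{i}}^{\star}\left(v\circ\Phi\right)$, and since $\Phi$ is invertible, $v\circ\Phi$ ranges over all of $\CC\left[\left[x,y\right]\right]$ as $v$ does; hence $\Psi$ maps the image of $\mathfrak{E}_{\left(C',\overline{h'}\right)}$ onto that of $\mathfrak{E}_{\left(C,\overline{h}\right)}$.

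For the converse $(2)\Rightarrow(1)$, I would begin with a diagonal formal automorphism $\Psi=\left(\psi_{i}\right)$ conjugating the image of $\mathfrak{E}_{\left(C,\overline{h}\right)}$ onto that of $\mathfrak{E}_{\left(C',\overline{h'}\right)}$. Composing with the two monomorphisms yields a $\CC$-algebra isomorphism $\theta:\hat{\mathcal{O}}_{C}\to\hat{\mathcal{O}}_{C'}$. The point of the \emph{diagonal} hypothesis is precisely that $\Psi$ preserves the individual factors of $\left(\CC\left[\left[t\right]\right]\right)^{r}$ indexed by the components, so that $\theta$ sends the branch $C_{i}$ to $C'_{i}$ and therefore respects the markings. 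I would then lift $\theta$ to the ambient rings: since $C$ is a singular plane curve, its maximal ideal is minimally generated by the classes of $x$ and $y$, so choosing lifts $X,Y\in\CC\left[\left[x,y\right]\right]$ of $\theta\left(\bar{x}\right),\theta\left(\bar{y}\right)$ produces a power series map $\Theta$ whose linear part is invertible, hence a formal automorphism of $\CC\left[\left[x,y\right]\right]$; comparing kernels gives $\Theta\left(f\right)=u\,f'$ for a unit $u$, so that $\Theta$ induces $\theta$.

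It remains to upgrade this formal automorphism $\Theta$ to a convergent one. Here I would invoke the finite determinacy of reduced plane curve germs, equivalently Artin approximation applied to the analytic system $\Theta\left(f\right)=u\,f'$: a formal solution exists by the previous step, so a convergent solution $\Phi\in\textup{Diff}\left(\CC^{2},0\right)$ exists as well, and one may require it to agree with $\Theta$ to sufficiently high order to guarantee that it still satisfies $\Phi\left(C_{i}\right)=C'_{i}$ for all $i$, hence preserves the markings; this produces the equivalence of $(1)$. I expect this final passage --- from the purely formal conjugacy data to a genuine analytic ambient automorphism compatible with the component labelling --- to be the main obstacle, the forward implication and the intervening algebraic bookkeeping being routine.
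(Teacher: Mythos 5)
Your proof is correct, but it is worth pointing out that the paper does not actually prove Lemma \ref{LemmeIsoAnneau}: it declares the result classical and refers to Ebey \cite{MR0176983} for the irreducible case. What you have written is essentially the standard argument behind that citation, extended to several branches with the marking bookkeeping: for $(1)\Rightarrow(2)$, conjugation of the parametrization rings by the reparametrizations $\psi_{i}$; for $(2)\Rightarrow(1)$, lifting of the induced isomorphism $\theta:\hat{\mathcal{O}}_{C}\to\hat{\mathcal{O}}_{C'}$ to a formal ambient automorphism (this uses that a \emph{singular} plane curve has embedding dimension $2$, guaranteed here by the standing assumption $\nu\left(S\right)\geq2$), then passage from formal to convergent by finite determinacy or Artin approximation. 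Two points should be made explicit to close the argument. First, the claim that two parametrizations of an irreducible germ differ by a reparametrization holds only for \emph{primitive} (Puiseux) parametrizations; this is the convention implicit in the paper's definition of $\gamma_{C}$, but your step $\Phi\circ\gamma_{C_{i}}=\gamma_{C'_{i}}\circ\psi_{i}$ silently relies on it. Second, in the final approximation step, "sufficiently high order" should be quantified: if the convergent solution $\Phi$ agrees with $\Theta$ modulo $\mathfrak{m}^{N}$ with $N$ strictly larger than every intersection multiplicity $\left(C_{i},C_{j}\right)_{0}$ for $i\neq j$, then the branch $\Phi\left(C'_{i}\right)$ of $C$ has contact at least $N$ with $C_{i}$, hence equals $C_{i}$; this is what guarantees that the analytic conjugacy still preserves the marking. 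With these two clarifications your argument is a complete and correct proof of the statement the paper leaves to the literature.
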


\subsection{The tropical semiring of values of $\left(C,\overline{h}\right)$}

Following \cite{Hernandes.semiring}, we consider $\Gamma_{C}$ the
set defined by 
\[
\Gamma_{C}=\left\{ \left.\nu\left(G\right)\right|G\in\mathcal{O}_{C}\right\} \subset\mathbb{\left(\overline{N}\right)}^{r}
\]
where $\overline{\mathbb{N}}=\mathbb{N}\cup\left\{ \infty\right\} $.
The valuation $\nu$ is defined by 
\[
\nu\left(G\right)=\left(\nu_{0}\left(\gamma_{C_{i}}^{\star}G\right)\right)_{i=1,\ldots,r}
\]
where $\nu_{0}$ is the standard valuation $\mathbb{C}\left\{ t\right\} .$
Notice that this set depends not only on the curve $C$ but also on
its marking.

The set $\Gamma_{C}$ inherits of a semiring structure defined by
\[
\alpha\oplus\beta=\left(\min\left\{ \alpha_{i},\beta_{i}\right\} \right)_{i=1\ldots r}\qquad\alpha\astrosun\beta=\left(\alpha_{i}+\beta_{i}\right)_{i=1\ldots r}
\]
where we set $k+\infty=\infty.$ $\Gamma_{C}$ is also partially ordered
by the product order $\leq$. The\emph{ }quadruplet\emph{ $\left(\Gamma_{C},\oplus,\astrosun,\leq\right)$
}is\emph{ }called\emph{ the tropical semiring of values of} $\left(C,\overline{h}\right)$.
\begin{defn}
\label{def:irreducible.absolute}A element $\alpha\in\Gamma_{C}$
is said \emph{irreducible} if and only if 
\[
\left(\alpha=a+b\textup{ with }a,\ b\in\Gamma_{C}\right)\Longrightarrow\alpha=a\textup{ or }\alpha=b.
\]
It is said to be \emph{absolute} if for any non empty proper subset
$J$ of the set 
\begin{equation}
\mathcal{I}_{\alpha}=\left\{ \left.i\in\left\{ 1,\ldots r\right\} \right|\alpha_{i}\neq\infty\right\} ,\label{coordonnees.a.infini}
\end{equation}
the following set 
\begin{equation}
F_{J}\left(\alpha\right)=\left\{ \left.a\in\Gamma_{C}\right|\forall i\in\mathcal{I}_{\alpha}\setminus J,a_{i}>\alpha_{i}\textup{ and }\forall i\notin\mathcal{I}_{\alpha}\setminus J,a_{i}=\alpha_{i}\right\} \label{ensemble.non.nul.audessus}
\end{equation}
is empty.
\end{defn}
The following result gathers some known properties of the semiring
of values.
\begin{thm}[\cite{MR909850,Hernandes.semiring,Waldi}]
\label{basic-facts}Two germ of plane curves are topologically equivalent
if and only if they share the same semiring of values \cite{Waldi}.
More precisely, if $C_{1}\cup C_{2}\cup\cdots\cup C_{r}$ and $C_{1}^{\prime}\cup C_{2}^{\prime}\cup\cdots\cup C_{r}^{\prime}$
are two curves with same semiring, then there exists an homeomorphism
$\phi$ of the ambient space $\left(\mathbb{C}^{2},0\right)$ such
that for any $i$ 
\[
\phi\left(C_{i}\right)=C_{i}^{\prime}.
\]
Moreover,
\begin{enumerate}
\item $\Gamma_{C}$ has a conductor, i.e, there exists a minimal $\sigma\in\Gamma_{C}$
such that $\sigma+\overline{\mathbb{N}}^{r}\subset\Gamma_{C}$ \cite{MR909850}.
\item The set $g$ of irreducible absolute points of $\Gamma_{C}$ is finite
and minimaly generates $\Gamma_{C}$ as semiring \cite{Hernandes.semiring}.
\item Any family $G$ of $\mathcal{O}_{C}$ such that $\nu\left(G\right)=g$
is a minimal standard basis of $\mathcal{O}_{C}$ as defined in \cite{Hernandes.semiring}.
\end{enumerate}
\end{thm}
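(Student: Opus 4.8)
The plan is to regard the statement as the assembly of four logically independent facts, tied together by one elementary observation that I would record at the outset: by the very definition of $\nu$, every element of $\Gamma_{C}$ is a vector of intersection multiplicities. Indeed $\nu_{i}\left(G\right)=\nu_{0}\left(\gamma_{C_{i}}^{\star}G\right)$ is the intersection multiplicity of the branch $C_{i}$ with the curve $\left\{ G=0\right\} $, and equals $\infty$ exactly when $G$ vanishes identically on $C_{i}$. This is the bridge between the analytic ring $\mathcal{O}_{C}$ and the embedded topology of $C$. The forward implication of the main equivalence is then the easy half: an ambient homeomorphism $h$ with $h\left(C_{i}\right)=C_{i}^{\prime}$ carries the link of the germ to the link of its image and preserves the linking numbers of the components, hence all intersection multiplicities between components and with auxiliary branches; since each such multiplicity is a topological quantity, the sets of achievable value vectors coincide and $\Gamma_{C}=\Gamma_{C^{\prime}}$.

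For the converse I would first extract from $\Gamma_{C}$ the branch data: the $i$-th coordinate projection yields the value semigroup $\Gamma_{C_{i}}\subset\mathbb{N}$ of $C_{i}$, while the interaction between coordinates encodes every pairwise intersection multiplicity $C_{i}\cdot C_{j}$. By Zariski's theorem for irreducible germs, $\Gamma_{C_{i}}$ determines the characteristic (Puiseux) data of $C_{i}$; together with the numbers $C_{i}\cdot C_{j}$ this determines the weighted dual graph of the minimal resolution of $C$, and hence the embedded topological type of the whole germ. The component-preserving homeomorphism $\phi$ is then produced by the realization and uniqueness results of Burau--Zariski, equivalently the Eisenbud--Neumann classification of links of plane germs. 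This step is the \emph{main obstacle}: it is precisely the content of Waldi's theorem and requires the full translation between the analytic semiring invariant and the resolution combinatorics, rather than any direct manipulation of $\Gamma_{C}$.

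The semiring-structural assertions I would deduce from the normalization $\mathcal{O}_{C}\hookrightarrow\prod_{i}\mathbb{C}\left\{ t_{i}\right\} $. Since $C$ is reduced, this is a finite extension with cokernel of finite length $\delta_{C}$, so the conductor ideal $\mathfrak{c}=\left(\mathcal{O}_{C}:\prod_{i}\mathbb{C}\left\{ t_{i}\right\} \right)$ is nonzero; as $\prod_{i}\mathbb{C}\left\{ t_{i}\right\} $ realizes every value vector with finite coordinates and $\mathfrak{c}$ is stable under multiplication by it, the value set of $\mathfrak{c}$ has the form $\sigma+\overline{\mathbb{N}}^{r}\subset\Gamma_{C}$ for a minimal $\sigma$, which is assertion (1). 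This conductor confines the combinatorics: an absolute point whose finite coordinates are not all strictly below the corresponding components of $\sigma$ would make some $F_{J}\left(\alpha\right)$ nonempty, contradicting Definition \ref{def:irreducible.absolute}, so all irreducible absolute points sit in a fixed finite region (for each choice of the subset of coordinates sent to $\infty$), whence finiteness; an induction over the two operations $\oplus$ and $\astrosun$ then shows that this finite set $g$ generates $\Gamma_{C}$, giving assertion (2). Assertion (3) is finally essentially formal: any family $G\subset\mathcal{O}_{C}$ with $\nu\left(G\right)=g$ consists of functions whose values generate $\Gamma_{C}$, which is exactly the defining property of a minimal standard basis in the sense of \cite{Hernandes.semiring}. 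Throughout, I expect the only genuinely hard input to be Waldi's realization theorem of the second paragraph, the remaining parts being standard consequences of the finiteness of $\delta_{C}$ and of the definitions.
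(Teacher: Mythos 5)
First, a framing remark: the paper contains no proof of Theorem \ref{basic-facts}. It is stated as a compendium of results imported from the literature --- \cite{Waldi} for the equivalence between topological type and semiring, \cite{MR909850} for the conductor, and \cite{Hernandes.semiring} for assertions (2) and (3) --- so your proposal cannot be matched against an internal argument and must stand on its own. Judged that way, its architecture is sensible: you correctly isolate the converse direction (semiring determines topology) as the hard core and defer it to Waldi and the resolution/link classification, exactly as the paper does. The genuine gap is in your \emph{forward} implication. You argue that an ambient homeomorphism $h$ with $h\left(C_{i}\right)=C_{i}^{\prime}$ preserves ``all intersection multiplicities between components and with auxiliary branches'', hence that the achievable value vectors coincide. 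But the elements of $\Gamma_{C}$ are valuations $\nu\left(G\right)$ of \emph{analytic} functions $G\in\mathcal{O}_{C}$, i.e.\ intersection vectors of the branches $C_{i}$ with analytic curves $\left\{ G=0\right\} $, and $h$ is only a homeomorphism: it does not carry $\left\{ G=0\right\} $ to an analytic curve and induces no correspondence between $\mathcal{O}_{C}$ and $\mathcal{O}_{C^{\prime}}$, so there is no direct transport of value vectors. What linking numbers give you is the topological invariance of the finitely many numbers $C_{i}\cdot C_{j}$ and of the characteristic exponents of each branch (Zariski--Burau); to conclude $\Gamma_{C}=\Gamma_{C^{\prime}}$ one needs the further, nontrivial fact that the whole semiring is computable from these discrete equisingularity data. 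That computability is precisely (half of) Waldi's theorem, not a consequence of ``multiplicities are topological''. So both directions of the equivalence, and not only the converse, ultimately rest on the citation.

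A secondary gap concerns your finiteness argument for assertion (2). Bounding an absolute point $\alpha$ by $\sigma$ through the nonemptiness of some $F_{J}\left(\alpha\right)$ requires $\mathcal{I}_{\alpha}$ to possess a nonempty \emph{proper} subset, i.e.\ $\left|\mathcal{I}_{\alpha}\right|\geq2$. Points with a single finite coordinate are automatically absolute, the condition being vacuous, and for those the bound must come from irreducibility instead: by the conductor, $\left(\infty,\cdots,k,\cdots,\infty\right)\in\Gamma_{C}$ decomposes as soon as $k\geq2\sigma_{l}$. This case split is not pedantic; the paper's own unnumbered lemma asserting $g_{l}^{i}\leq\sigma_{l}-1$ (proved via Lemma \ref{truncation}) hinges on distinguishing $J$ empty from $J$ nonempty and on excluding curves that are unions of two smooth branches. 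Your conductor argument for assertion (1) via the conductor ideal of the normalization is correct and standard, and assertion (3) is indeed close to definitional once (2) is granted; but since (1)--(3) are in any case cited to \cite{MR909850,Hernandes.semiring}, your sketches there are acceptable as heuristics while the forward-implication argument, as written, is not a proof.
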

In particular, for any element $C\in\top$, one has 
\[
\Gamma_{C}=\Gamma_{S}.
\]

For now on, we will denote the mutual semiring for curves in $\top$
simply by $\Gamma.$

\subsection{Trunctation and conductor.}

The following lemma allows us to truncate elements in the ring $\mathcal{O}_{C}$
(resp. $\hat{\mathcal{O}}_{C}$ ).
\begin{lem}
\label{truncation}Suppose that $G=\left(\sum_{k=0}^{\infty}a_{lk}t^{k}\right)_{l=1,\ldots,r}$
is an element of $\mathcal{O}_{C}$ ( resp. of its completion $\hat{\mathcal{O}}_{C}$).
Then , for any $p=\left(p_{1},\ldots,p_{r}\right)\in\mathbb{N}^{r}$
with $p_{l}\geq\sigma_{l}-1$ for $l=1,\ldots,r$, one has 
\[
\left(\sum_{k=0}^{p_{l}}a_{lk}t^{k}\right)_{l=1,\ldots,r}\in\mathcal{O}_{C},\ \textup{(resp. \ensuremath{\hat{\mathcal{O}}_{C}}}\textup{)}
\]
\end{lem}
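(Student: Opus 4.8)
The plan is to deduce the statement from the single fact that every vector lying at or beyond the conductor already belongs to the ring. Write $G=(f_l)_l$ with $f_l=\sum_{k\ge 0}a_{lk}t^k$ and let $G_p=\bigl(\sum_{k=0}^{p_l}a_{lk}t^k\bigr)_l$ be the candidate truncation. Their difference is
\[
G-G_p=\Bigl(\sum_{k>p_l}a_{lk}t^k\Bigr)_{l=1,\ldots,r},
\]
whose $l$-th component has valuation at least $p_l+1\ge\sigma_l$ by the hypothesis $p_l\ge\sigma_l-1$ (the convention being that a vanishing tail has valuation $\infty$). Since $\mathcal{O}_C$ (resp. $\hat{\mathcal{O}}_C$) is a ring and $G$ belongs to it, it suffices to prove the containment
\[
\mathfrak{c}:=\bigl\{\,z=(z_l)_l\ \big|\ \nu_0(z_l)\ge\sigma_l\ \text{for all}\ l\,\bigr\}\subseteq\mathcal{O}_C\quad(\text{resp. }\hat{\mathcal{O}}_C),
\]
for then $G_p=G-(G-G_p)$ is a difference of two elements of the ring.

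The key input is the conductor property $\sigma+\overline{\mathbb{N}}^r\subseteq\Gamma$ of Theorem \ref{basic-facts}(1), which I would exploit precisely through its \emph{infinite} coordinates. For a fixed branch $l$ and any integer $m\ge\sigma_l$, the vector carrying $m$ in position $l$ and $\infty$ elsewhere equals $\sigma+(\infty,\dots,\infty,\,m-\sigma_l,\,\infty,\dots,\infty)$, hence lies in $\Gamma$. Consequently there is an element of $\mathcal{O}_C$ supported on the single component $C_l$ whose parametrization has order exactly $m$; after normalization I call it $h^{(m)}_l=\bigl(t^m+\text{higher order}\bigr)e_l$, where $e_l$ denotes the standard idempotent of $(\mathbb{C}[[t]])^r$. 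These single-branch elements are exactly what makes it possible to lower the valuation of one coordinate of a target vector without disturbing the others.

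Decomposing $z=\sum_l z_le_l$, it is then enough by additivity to reach each $z_le_l$ with $\nu_0(z_l)\ge\sigma_l$. I would do so by the usual greedy scheme: subtract from $z_le_l$ the suitable scalar multiple of $h^{(m)}_l$ with $m=\nu_0(z_l)$ to cancel the leading term, obtaining a remainder of strictly larger order, and iterate. The partial sums lie in $\mathcal{O}_C$ and converge $t$-adically to $z_le_l$; in the \emph{formal} case this already finishes the proof, since $\hat{\mathcal{O}}_C$ is $t$-adically complete, giving $\mathfrak{c}\subseteq\hat{\mathcal{O}}_C$.

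The main obstacle is the \emph{convergent} ring $\mathcal{O}_C$, where one must ensure that the limit of the greedy scheme is not merely formal but genuinely analytic and inside $\mathcal{O}_C$. I would bridge this gap from the formal case just established. The normalization $(\mathbb{C}\{t\})^r$ is a finite $\mathcal{O}_C$-module and the quotient $(\mathbb{C}\{t\})^r/\mathcal{O}_C$ has finite $\mathbb{C}$-dimension $\delta$, so it coincides with its own completion; faithful flatness of completion then yields $\mathcal{O}_C=\hat{\mathcal{O}}_C\cap(\mathbb{C}\{t\})^r$ inside $(\mathbb{C}[[t]])^r$. Since any $z\in\mathfrak{c}$ with convergent components lies in $\hat{\mathcal{O}}_C$ by the formal step and in $(\mathbb{C}\{t\})^r$ by assumption, it lies in $\mathcal{O}_C$. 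Equivalently, $\mathfrak{c}$ is precisely the conductor ideal of $\mathcal{O}_C$ in its normalization, whose inclusion in $\mathcal{O}_C$ is automatic; identifying that conductor ideal with $\mathfrak{c}$ through the semiring conductor is the one genuinely delicate point, which is why I isolate it rather than treating it as routine.
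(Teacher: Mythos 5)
Your proof is correct, and its engine is the same as the paper's: both exploit the conductor property of Theorem \ref{basic-facts}(1) through its infinite coordinates to produce, for each branch $l$ and each $m\geq\sigma_{l}$, an element of $\mathcal{O}_{C}$ supported on the single component $C_{l}$ with valuation exactly $m$, and both then cancel the tail $\left(\sum_{k>p_{l}}a_{lk}t^{k}\right)_{l}$ coefficient by coefficient with these elements (the paper phrases this as an induction on the rank $k\geq p_{l}+1$ producing a series $\hat{F}_{l}$ with $\gamma^{\star}\hat{F}_{l}=\left(0,\cdots,0,\sum_{k=p_{l}+1}^{\infty}a_{lk}t^{k},0,\cdots,0\right)$, and finally subtracts). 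Where you genuinely diverge is the convergent case: the paper settles it by invoking \cite[Theorem 1, p.493]{MM} to conclude that $\hat{F}_{l}$ is convergent whenever $G$ is, whereas you settle the formal case first and then descend to $\mathcal{O}_{C}$ through the purely algebraic identity $\mathcal{O}_{C}=\hat{\mathcal{O}}_{C}\cap\left(\mathbb{C}\left\{ t\right\} \right)^{r}$, deduced from finiteness of the normalization, finite dimensionality of $\left(\mathbb{C}\left\{ t\right\} \right)^{r}/\mathcal{O}_{C}$, and exactness of completion; your route is self-contained and makes transparent that no approximation-theoretic input is required, while the paper's is shorter. Two points in your write-up carry hidden weight and deserve a sentence each if this were written out: the closedness of $\hat{\mathcal{O}}_{C}$ under $t$-adic limits inside $\left(\mathbb{C}\left[\left[t\right]\right]\right)^{r}$ is true but not free --- it rests on the same finiteness of the normalization (equivalence of the $t$-adic and maximal-adic topologies by Artin--Rees/Chevalley, or once more exactness of completion) --- and your closing remark is accurate: identifying $\mathfrak{c}$ with the conductor ideal of $\mathcal{O}_{C}$ in its normalization is equivalent to the lemma itself, so it is a reformulation of what your greedy-plus-completion argument proves, not an available shortcut.
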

\begin{proof}
By definition of $\sigma,$ for any $l=1,\ldots,r$ and for any $k\geq p_{l}+1\geq\sigma_{l}$,
the $r-$uple 
\[
\left(\infty,\cdots,\infty,\underbrace{k}_{l^{th}},\infty,\cdots,\infty\right)
\]
belongs to $\Gamma.$ Thus, an inductive argument on the rank $k\geq p_{l}+1$
shows that there exists a formal series $\hat{F_{l}}\in\mathbb{C}\left[\left[x,y\right]\right]$
such that 
\[
\gamma^{\star}\hat{F_{l}}=\left(0,\cdots,0,\sum_{k=p_{l}+1}^{\infty}a_{lk}t^{k},0,\cdots,0\right).
\]
 Now, following \cite[Theorem 1, p.493]{MM}, if $G$ is convergent,
so is $\hat{F}_{l}$ and, in any case, evaluating 
\[
G-\gamma^{\star}\left(\sum_{l=1}^{r}\hat{F_{l}}\right)
\]
yields the lemma.
\end{proof}

\subsection{$\Gamma-$reduction.}

The notion of\emph{ $\Gamma-$reduction} will allow us to construct
normal forms for systems of generators of $\hat{\mathcal{O}}_{C}$.

Let $\underline{P}=\left(P_{i}\right)_{i=1,\ldots,r}$ be a family
of $r$ finite subsets of $\overline{\mathbb{N}}$ such that for any
$i,$ $\infty\in P_{i}.$
\begin{defn}
The family $\underline{P}$ is \emph{said to be $\Gamma$-reduced}
if and only if 
\[
\Gamma\cap\prod_{i=1,\ldots,r}P_{i}=\left\{ \underline{\infty}\right\} 
\]
where $\underline{\infty}=\left(\infty,\infty,\cdots,\infty\right)$
\end{defn}
A\emph{ $\Gamma-$reduction of $\underline{P}$ }is an elementary
transformation of $\underline{P}$ of the following form : suppose
that there exists $\underline{n}=\left(n_{1},\cdots,n_{r}\right)\neq\underline{\infty}$
such that 
\[
\underline{n}\in\Gamma\cap\prod_{i=1,\ldots,r}P_{i}.
\]
Consider an integer $i$ such that $n_{i}\neq\infty.$ Then the family
$\underline{P}^{\left(1\right)}=\left(P_{i}^{\left(1\right)}\right)_{i=1,\ldots,r}$
defined by 
\begin{align*}
\left\{ \begin{array}{clc}
P_{j}^{\left(1\right)}= & P_{j} & \textup{for }j\ne i\\
P_{i}^{\left(1\right)}= & P_{i}\setminus\left\{ n_{i}\right\} 
\end{array}\right.
\end{align*}
is called a $\Gamma-$reduction of $\underline{P}.$ To keep track
of a $\Gamma-$reduction, we denote it by 
\[
\underline{P}=\underline{P}^{\left(0\right)}\xrightarrow{\underline{n},i}\underline{P}^{\left(1\right)}.
\]
The following lemma is obvious
\begin{lem}
For any $\underline{P}$, there exists a finite sequence of $\Gamma-$reductions
\[
\underline{P}=\underline{P}^{\left(0\right)}\xrightarrow{\underline{n}_{0},i_{0}}\underline{P}^{\left(1\right)}\xrightarrow{\underline{n}_{1},i_{1}}\cdots\xrightarrow{\underline{n}_{q-1},i_{q-1}}\underline{P}^{\left(q\right)}.
\]
such that $\underline{P}^{\left(q\right)}$ is $\Gamma-$reduced.
\end{lem}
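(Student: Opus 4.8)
The plan is to prove the existence of such a terminating sequence by a standard well-foundedness argument: I would attach to each admissible family a nonnegative integer monovariant that strictly decreases under every $\Gamma$-reduction, and then check that a sequence of reductions can fail to be prolonged precisely when the current family is already $\Gamma$-reduced. Together these two observations force any maximal sequence of reductions to be finite and to terminate at a $\Gamma$-reduced family.

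First I would set, for any admissible family $\underline{P}=\left(P_{i}\right)_{i=1,\ldots,r}$,
\[
N\left(\underline{P}\right)=\sum_{i=1}^{r}\left(\#P_{i}-1\right),
\]
which counts the total number of \emph{finite} entries occurring among the $P_{i}$, the subtracted $1$ accounting for the element $\infty$ that belongs to every factor. Since each $P_{i}$ is finite, $N\left(\underline{P}\right)$ is a well-defined nonnegative integer. Next I would observe that a single reduction strictly decreases it. Indeed, a reduction $\underline{P}\xrightarrow{\underline{n},i}\underline{P}^{\left(1\right)}$ requires a point $\underline{n}\neq\underline{\infty}$ in $\Gamma\cap\prod_{i}P_{i}$ together with an index $i$ such that $n_{i}\neq\infty$; by construction $n_{i}$ is then a \emph{finite} element of $P_{i}$, and the reduction deletes exactly this element while leaving all other factors unchanged, so that $N\left(\underline{P}^{\left(1\right)}\right)=N\left(\underline{P}\right)-1$.

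To conclude, since $N$ takes values in $\mathbb{N}$ and drops by $1$ at each step, no sequence of $\Gamma$-reductions starting from $\underline{P}$ can have more than $N\left(\underline{P}\right)+1$ terms; hence any sequence that cannot be prolonged is finite. It then remains to identify the terminal family $\underline{P}^{\left(q\right)}$: a further reduction is possible if and only if there exists $\underline{n}\neq\underline{\infty}$ in $\Gamma\cap\prod_{i}P_{i}^{\left(q\right)}$ — any such $\underline{n}$ automatically possesses a finite coordinate, which supplies an admissible index $i$ — and this is exactly the negation of the defining condition $\Gamma\cap\prod_{i}P_{i}^{\left(q\right)}=\left\{\underline{\infty}\right\}$ for being $\Gamma$-reduced. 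Thus the family at which the process halts is $\Gamma$-reduced, which is the assertion. As the author notes, there is essentially no obstacle here; the only point deserving a moment's care is the verification that each reduction genuinely removes a finite element, so that $N$ strictly decreases rather than stalling, and this is immediate from the hypothesis $n_{i}\neq\infty$ built into the definition of a $\Gamma$-reduction.
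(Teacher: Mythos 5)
Your proof is correct and matches the intended argument: the paper offers no proof at all, declaring the lemma obvious, and your monovariant $N\left(\underline{P}\right)=\sum_{i}\left(\#P_{i}-1\right)$, which drops by exactly one at each step because the deleted entry $n_{i}$ is finite, is precisely the straightforward termination argument the author has in mind. Your identification of the non-prolongable families with the $\Gamma$-reduced ones is also sound, the only implicit point being that $\underline{\infty}$ always remains in $\Gamma\cap\prod_{i}P_{i}$ (since $\nu\left(0\right)=\underline{\infty}$ and a reduction never removes $\infty$ from any factor), so that the inclusion $\Gamma\cap\prod_{i}P_{i}\subset\left\{ \underline{\infty}\right\}$ is indeed equivalent to the defining equality.
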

Notice that this sequence is not unique.

\subsection{Parametrization of the set $\protect\top$.}

Let $g=\left\{ g^{1},\cdots,g^{q}\right\} $ be the set of irreducible
absolute points of $\Gamma$ and $G=\left\{ G^{1},\cdots,G^{q}\right\} \subset\mathcal{O}_{C}$
such that for all $i,$
\[
\nu\left(G^{i}\right)=g^{i}.
\]

\begin{lem}
Among the family $G$ and in the identification
\[
\mathcal{O}_{C}=\frac{\mathbb{C}\left[\left[x,y\right]\right]}{\left(f\right)},
\]
there are exactly two components $G^{i}$ whose linear parts are independant.
\end{lem}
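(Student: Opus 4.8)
The plan is to phrase everything in the Zariski cotangent space $\mathfrak{m}_C/\mathfrak{m}_C^2$ of $(C,0)$, where $\mathfrak{m}_C$ is the maximal ideal of $\mathcal{O}_C$, and to identify the linear part of an element of $\mathfrak{m}_C$ with its image under the linear-part map $L\colon\mathfrak{m}_C\to\mathfrak{m}_C/\mathfrak{m}_C^2$. First I would record the easy upper bound. Since $S$ is singular, any reduced equation $f$ of $C$ has order $\nu(S)\geq 2$, hence $f\in\mathfrak{m}^2$; therefore $\mathfrak{m}_C/\mathfrak{m}_C^2=\mathfrak{m}/\mathfrak{m}^2$ is two-dimensional, spanned by the classes of $x$ and $y$. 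In particular no three of the vectors $L(G^i)$ can be linearly independent, which settles the upper half of the statement.

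For the lower bound I would show that the $L(G^i)$ actually span $\mathfrak{m}_C/\mathfrak{m}_C^2$. Here I would invoke Theorem~\ref{basic-facts}(3): the family $G$ is a minimal standard basis of $\mathcal{O}_C$, and the defining feature of such a basis is that it generates $\mathcal{O}_C$ as a complete local $\mathbb{C}$-algebra. Concretely, the semiring generation of $\Gamma$ by $g=\nu(G)$ lets one reduce any $h\in\mathcal{O}_C$ by subtracting a suitable $\mathbb{C}$-combination of monomials $\prod_i(G^i)^{n_i}$ sharing its valuation, and iterating writes $h$ as a formal series in the $G^i$ with no constant term. Applying this to $h=x$ and $h=y$ and passing modulo $\mathfrak{m}_C^2$, where every product $G^iG^j$ vanishes, gives $\bar{x},\bar{y}\in\operatorname{span}_{\mathbb{C}}\{L(G^i)\}$; hence the $L(G^i)$ span a two-dimensional space and at least two of them are independent. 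Combined with the upper bound, the maximal number of $G^i$ with linearly independent linear parts is exactly two.

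The delicate point is precisely this lower bound, namely excluding the degenerate case in which all the $L(G^i)$ are proportional to a single form; the embedding-dimension bound, by contrast, is automatic from $f\in\mathfrak{m}^2$. An illuminating auxiliary fact, which I would isolate, is that whenever a generator satisfies $g^i_k=m_k$ for some branch $C_k$ (with $m_k$ its multiplicity), the corresponding $G^i$ has nonzero linear part transverse to $C_k$: indeed every element of $\mathfrak{m}_C^2$ pulls back to $C_k$ with order at least $2m_k$, whereas $\nu(G^i)_k=m_k<2m_k$, so $G^i\notin\mathfrak{m}_C^2$. This also clarifies that the assertion concerns the rank of the family $\{L(G^i)\}$ rather than a count of nonzero linear parts: for three lines through the origin the three irreducible absolute generators force three elements of $G$ whose linear parts are the three tangent directions, pairwise independent yet of total rank two. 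The whole weight of the lemma therefore rests on the generation property of the standard basis.
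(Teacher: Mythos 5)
Your proof is correct, and it takes a genuinely different route from the paper's. Both arguments ultimately rest on the same input --- Theorem \ref{basic-facts}(3), i.e.\ that the complete ring generated by $G$ is all of $\hat{\mathcal{O}}_{C}$ --- but they exploit it differently. The paper argues by contradiction with a case analysis on the geometry of $C$: if $C$ has a singular branch, parametrized by $t\mapsto\left(t^{n},t^{m}+\cdots\right)$, then the two valuations $\nu\left(x\right)=\left(n,\cdots\right)$ and $\nu\left(y\right)=\left(m,\cdots\right)$ cannot both be realized by the ring generated by $G$ when the linear parts of the $G^{i}$ are pairwise dependent; a separate, similar argument treats the case where $C$ consists of smooth components. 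You instead push the generation statement into the cotangent space $\mathfrak{m}_{C}/\mathfrak{m}_{C}^{2}$: since every element of $\hat{\mathfrak{m}}_{C}$ is a series in the $G^{i}$ without constant term, the classes of $x$ and $y$ lie in the span of the $L\left(G^{i}\right)$, so the linear parts span the two-dimensional space $\mathfrak{m}/\mathfrak{m}^{2}$ (here $f\in\mathfrak{m}^{2}$ because $\nu\left(C\right)=\nu\left(S\right)\geq2$). This is uniform in $C$, needs no case analysis, and is the cleaner argument.

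The two texts genuinely diverge on the meaning of ``exactly two''. The paper's last step asserts that a third $G^{i}$ with nontrivial linear part would contradict the minimality of $g$; you instead read the lemma as saying that the family $\left\{ L\left(G^{i}\right)\right\} $ has rank exactly $2$. Your three-lines example shows your reading is the tenable one: for $C=\left\{ xy\left(x+y\right)=0\right\} $ the three valuations $\left(\infty,1,1\right)$, $\left(1,\infty,1\right)$, $\left(1,1,\infty\right)$ are irreducible absolute points in the sense of Definition \ref{def:irreducible.absolute}, hence belong to $g$, and any functions realizing them are forced to have nonzero linear parts proportional to $x$, $y$ and $x+y$ respectively --- three nontrivial, pairwise independent linear parts coexisting with the minimality of $g$. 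So the paper's concluding minimality argument does not hold as stated, whereas the rank-$2$ statement you prove is exactly what the sequel uses (two elements of $G$ serving as a formal coordinate system). The only soft spot in your write-up is the ``concretely'' sketch of the division process: killing initial terms simultaneously on all branches is precisely where the standard-basis/semiring machinery is needed, so that step should remain an invocation of Theorem \ref{basic-facts}(3) rather than be presented as a proof --- which is, in fairness, exactly how the paper itself uses it.
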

\begin{proof}
Assume that $C$ contains an irreducible singular component, say $C_{1}$,
and consider some coordinates $\left(x,y\right)$ such that it is
parametrized by
\[
t\to\left(t^{n},t^{m}+\cdots\right),\quad n\nmid m
\]
Evaluating the valuation of the coordinate functions $x$ and $y$,
we obtain that $\Gamma$ contains two elements of the form 
\begin{equation}
\nu\left(x\right)=\left(n,\cdots\right)\in\Gamma\textup{ and }\nu\left(y\right)=\left(m,\cdots\right)\in\Gamma.\label{bothpoints}
\end{equation}
If the linear parts of the functions $G^{i}$ are dependant two by
two, then the set of valuations of the complete ring generated by
the family $G$ can contains either $\left(n,\cdots\right)$ or $\left(m,\cdots\right)$
or none of them, but certainly not both. However, according to Theorem
\ref{basic-facts}, the complete ring generated by $G$ is the whole
ring $\hat{\mathcal{O}}_{C}$, which contradicts (\ref{bothpoints}).
If $C$ contains two smooth components, transversal or not, a contradiction
can be obtained in much the same way by considering coordinates in
which these components are written 
\[
t\to\left(\left(t,0\right),\left(0,t\right),\cdots\right)\textup{ or }t\to\left(\left(t,0\right),\left(t,t^{n}\right),\cdots\right),\ n\geq2.
\]
Finally, the existence of a third function $G^{i}$ with a non trivial
linear part would contradict the minimality of the family $g$.
\end{proof}
Changing the numbering of the elements in $g,$ we may assume that
the two elements identified by the above lemma are $G^{1}$ and $G^{2}$
with $g^{1}<g^{2}$ for the lexicographic order. Let us denote $G^{i}$,
$i=1,2$ by
\begin{equation}
G^{i}=\left(\sum_{k=g_{l}^{i}}^{\infty}a_{lk}^{i}t^{k}\right)_{l=1,\ldots,r}\label{truncature-1}
\end{equation}
Notice that in the above expression, $g_{l}^{i}$ may be equal to
$\infty$ and the corresponding component $\left(G^{i}\right)_{l}$
be equal to $0$ . However, one has the following
\begin{lem}
Assume that $C$ is not the union of two smooth curves. If $g_{l}^{i}\neq\infty,$
then $g_{l}^{i}\leq\sigma_{l}-1.$
\end{lem}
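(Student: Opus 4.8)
The plan is to argue by contradiction, assuming $g_{l}^{i}\ge\sigma_{l}$ while $g_{l}^{i}\neq\infty$, and to build from $G^{i}$ and the conductor an element of $\Gamma$ that violates the absoluteness of $g^{i}$ --- recall that $g^{i}$ is an irreducible \emph{absolute} point by item~(2) of Theorem~\ref{basic-facts}. Everything depends on the size of the support $\mathcal{I}_{g^{i}}=\{\,j:g_{j}^{i}\neq\infty\,\}$, and I would separate the case $|\mathcal{I}_{g^{i}}|\ge2$, which stays inside the semiring, from the degenerate case $\mathcal{I}_{g^{i}}=\{l\}$, where the geometry of $G^{i}$ and the standing hypothesis on $C$ must be brought in.

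First, suppose $|\mathcal{I}_{g^{i}}|\ge2$. Since $g_{l}^{i}\ge\sigma_{l}$, the conductor property (item~(1) of Theorem~\ref{basic-facts}), in the pointwise form extracted at the beginning of the proof of Lemma~\ref{truncation}, shows that the $r$-tuple carrying $g_{l}^{i}$ in the $l$-th slot and $\infty$ elsewhere belongs to $\Gamma$; I would pick $H\in\mathcal{O}_{C}$ realizing it, so that $H$ vanishes on every $C_{j}$ with $j\neq l$ while $\nu_{0}(\gamma_{C_{l}}^{\star}H)=g_{l}^{i}$. Subtracting from $G^{i}$ the scalar multiple of $H$ that cancels the leading coefficient of $\gamma_{C_{l}}^{\star}G^{i}$ produces $\tilde{G}\in\mathcal{O}_{C}$ with $\nu(\tilde{G})_{l}>g_{l}^{i}$ and $\nu(\tilde{G})_{j}=g_{j}^{i}$ for all $j\neq l$. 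Taking $J=\mathcal{I}_{g^{i}}\setminus\{l\}$, which is a nonempty proper subset of $\mathcal{I}_{g^{i}}$, the value $\nu(\tilde{G})$ then lies in the set $F_{J}(g^{i})$ of \eqref{ensemble.non.nul.audessus}, contradicting absoluteness and settling this case.

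The remaining case $\mathcal{I}_{g^{i}}=\{l\}$ is the one I expect to be the real obstacle, since then $J$ above would be empty and absoluteness says nothing. Here I would use that $G^{i}$ has a nonzero linear part --- it is one of the two distinguished generators of the previous lemma --- so that $\{G^{i}=0\}$ is a smooth germ; as $\mathcal{I}_{g^{i}}=\{l\}$ means $G^{i}$ vanishes on every other component, each $C_{j}$ with $j\neq l$ is contained in, hence equal to, this smooth germ. When $r=1$ this is the irreducible situation already treated by Ebey; otherwise exactly one other component $C_{j_{0}}$ occurs, it is smooth, and $g_{l}^{i}=\nu_{0}(\gamma_{C_{l}}^{\star}G^{i})=(C_{l}\cdot C_{j_{0}})_{0}$. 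Feeding this into the classical expression $\sigma_{l}=2\delta_{C_{l}}+(C_{l}\cdot C_{j_{0}})_{0}$ for the $l$-th coordinate of the conductor of a two-branch curve, the inequality $g_{l}^{i}\ge\sigma_{l}$ forces $\delta_{C_{l}}=0$, so $C_{l}$ is smooth as well and $C$ is a union of two smooth curves --- excluded by hypothesis. The difficulty, and the reason this hypothesis is indispensable, is that the semiring alone is blind to this configuration: only the coordinate-function nature of $G^{i}$ together with the conductor--intersection dictionary detects it.
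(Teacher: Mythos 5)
Your proof is correct and takes essentially the same route as the paper: the case $\left|\mathcal{I}_{g^{i}}\right|\geq2$ is settled exactly as in the text by producing, from the conductor property, an element of $\Gamma$ lying in $F_{J}\left(g^{i}\right)$ for $J=\mathcal{I}_{g^{i}}\setminus\left\{ l\right\} $ and contradicting absoluteness (the paper obtains this element from Lemma \ref{truncation}, whose proof is the very subtraction you carry out), while your treatment of $\mathcal{I}_{g^{i}}=\left\{ l\right\} $ --- smoothness of $\left\{ G^{i}=0\right\} $ forces $r\leq2$, then the conductor--intersection formula $\sigma_{l}=2\delta_{C_{l}}+\left(C_{l}\cdot C_{j_{0}}\right)_{0}$ forces both branches to be smooth --- is precisely the paper's handling of the subcases $r\geq3$ and $r=2$, done intrinsically rather than in adapted coordinates via the formula of \cite{Hernandes.semiring}. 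One remark: your deferral of $r=1$ to Ebey mirrors the paper's silent restriction to $r\geq2$, but note that the statement is genuinely false for an irreducible singular curve (for the cusp $y^{2}=x^{3}$ one has $g^{1}=2>\sigma-1=1$), so $r\geq2$ must be read as an implicit hypothesis of the lemma rather than something either argument establishes.
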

\begin{proof}
The proof is by contradiction. Suppose that for some $l,$ $g_{l}^{i}\neq\infty$
and $g_{l}^{i}\geq\sigma_{l}.$ Applying Lemma \ref{truncation} to
$G^{i}$ with 
\[
\left(p_{i}\right)_{i=1,\ldots,r}=\left(\infty,\cdots,\infty,\sigma_{l},\infty,\cdots,\infty\right)
\]
yields an element $\overline{g}\in\Gamma$ such that $\overline{g}_{l}=\infty$
and $\overline{g}_{k}=g_{k}^{i}$ for $k\neq l.$ Consider the proper
subset of $\mathcal{I}_{g^{i}}$ defined by 
\[
J=\mathcal{I}_{g^{i}}\setminus\left\{ l\right\} ,
\]
and suppose it is non empty. Definition \ref{def:irreducible.absolute}
of absolute point ensures that $F_{J}\left(g^{i}\right)$ is empty.
However, by construction, $\overline{g}$ belongs to $F_{J}\left(g^{i}\right)$
which is a contradiction. Thus, $J$ is empty and $\mathcal{I}_{g^{i}}=\left\{ l\right\} $.
Therefore, $g^{i}$ is written 
\[
g^{i}=\left(\infty,\cdots,\infty,g_{l}^{i},\infty,\cdots,\infty\right).
\]
\begin{itemize}
\item If $r\geq3,$ we are lead to a contradiction noticing that $G^{i}$
would be a function with non trivial linear part vanishing along two
distinct components of $C.$
\item Assume $r=2.$ Since $G^{i}$ is a regular function and $g^{i}=\left(\infty,g_{2}^{i}\right)$
or $\left(g_{1}^{i},\infty\right)$, one of the component of $C$,
say $C_{1},$ is smooth. One can choose some coordinates $\left(x,y\right)$
such that
\begin{align*}
C_{1} & =\left\{ \alpha y+\beta x=0\right\} ,\ \alpha,\beta\in\mathbb{C}\\
C_{2} & =\left\{ y^{p}+x^{q}+\cdots=0\right\} 
\end{align*}
with $p<q$. The hypothesis of the lemma ensures that the case $p=1$
is excluded. According to \cite{Hernandes.semiring}, the conductor
$\sigma$ of $C$ is written 
\[
\sigma=\left(0,c_{2}\right)+\left\{ \begin{array}{cc}
\left(p,p\right) & \textup{ if }\beta\neq0\\
\left(q,q\right) & \textup{ if }\beta=0
\end{array}\right..
\]
where $c_{2}\geq1$ is the conductor of the component $C_{2}.$ By
construction, the function $G^{1}$ is equal to $\alpha y+\beta x.$
Therefore, 
\[
g^{1}=\left(\infty,\left\{ \begin{array}{cc}
p & \textup{ if }\beta\neq0\\
q & \textup{ if }\beta=0
\end{array}\right.\right),
\]
thus $g_{2}^{1}<\sigma_{2}.$
\end{itemize}
\end{proof}
If $C$ is a union of two smooth curves then one has
\[
\sigma=\left(n,n\right),\ g^{1}=\left(\infty,n\right)\textup{ and }g^{2}=\left(n,\infty\right)
\]
where $n$ is the order of tangency between $C_{1}$ and $C_{2}.$
Actually, $C$ is analytically equivalent to the curve
\[
y\left(y+x^{n}\right)=0.
\]
Thus the moduli space of $C$ reduces to a point and the problem of
the analytic classification is trivial. For now on, we will assume
that $C$ is not a union of two smooth curves.

Lemma \ref{truncation} yields truncations of $G^{i},$ $i=1,2$ that
we keep on denoting it by 
\begin{equation}
G^{i}=\left(\sum_{k=g_{l}^{i}}^{\max\left(\sigma_{l}-1,g_{l}^{i}\right)}a_{lk}^{i}t^{k}\right)_{l=1,\ldots,r}\label{truncature}
\end{equation}
Notice that some components of (\ref{truncature}) - but not all -
may vanish.

We are going to normalize the expressions of $G^{i}$ in order to
make it unique and depending only on the marked curve $\left(C,\overline{h}\right).$
The first normalization consists in the following: for $i=1,\ 2$
let us consider the smallest $l_{i}$ such that $g_{l_{i}}^{i}\neq\infty,$
we impose that 
\[
a_{l_{i}g_{l_{i}}^{i}}^{i}=1.
\]
To go further in the normalization, we will use $\Gamma-$reductions.
For $i=1,2\ $ let us consider $\underline{P}^{i}=\left(P_{1}^{i},\ldots,P_{r}^{i}\right)$
defined by 
\[
\left\{ \begin{array}{lc}
P_{l}^{i}=\left[g_{l}^{i},\max\left(\sigma_{l}-1,g_{l}^{i}\right)\right]\cap\mathbb{N}\cup\left\{ \infty\right\}  & \textup{ if }l\neq l_{i}\\
P_{l_{i}}^{i}=\left[g_{l_{i}}^{i}+1,\max\left(\sigma_{l_{i}}-1,g_{l_{i}}^{i}\right)\right]\cap\mathbb{N}\cup\left\{ \infty\right\} 
\end{array}\right.
\]
Notice that if $g_{l}^{i}=\infty$ then $P_{l}^{i}=\left\{ \infty\right\} .$
In the same way, if $g_{l_{i}}^{i}=\sigma_{l_{i}}-1$ then $P_{l_{i}}^{i}=\left\{ \infty\right\} .$

For any $\underline{n}\in\overline{\mathbb{N}}^{r},$ we denote by
$\textup{Init}^{i}\left(\underline{n}\right)$ the integer defined
by 
\[
\min\left\{ \left.k\right|\left(\underline{n}\right)_{k}\neq\infty\textup{ and }\left(\underline{n}\right)_{k}\neq g_{k}^{i}\right\} .
\]
If $\underline{n}\neq\underline{\infty}\in\Gamma\cap\prod_{l=1,\ldots,r}P_{l}^{i}$
then $\textup{Init}^{i}\left(\underline{n}\right)$ is well defined
since the set of which it is the minimum is non-empty : indeed, if
for any $l$, one has $\left(\underline{n}\right)_{l}=g_{l}^{i}$
or $\infty$. In particular, $\left(\underline{n}\right)_{l_{i}}=\infty.$
Moreover, $\underline{n}$ belongs to $F_{J}\left(g^{i}\right)$ where
$J$ is defined by 
\[
J=\left\{ \left.k\right|\left(\underline{n}\right)_{k}\neq\infty\right\} .
\]
The set $J$ is non-empty since $\underline{n}\neq\underline{\infty}$
and is proper since $l_{i}\notin J$. That is impossible because by
definition of absolute point, $F_{J}\left(g^{i}\right)$ is empty.

We choose a sequence of $\Gamma-$reductions of $\underline{P}^{i}$
\[
\underline{P}^{i}=\underline{P}^{i,\left(0\right)}\xrightarrow{\underline{n}_{0},k_{0}}\underline{P}^{i,\left(1\right)}\xrightarrow{\underline{n}_{1},k_{1}}\cdots\underline{P}^{i,\left(q_{i}-1\right)}\xrightarrow{\underline{n}_{q_{i}-1},k_{q_{i}-1}}\underline{P}^{i,\left(q_{i}\right)}
\]

such that for any $t\in\left\{ 0,\cdots,q_{i}-1\right\} ,$
\begin{enumerate}
\item[$\left(P_{1}\right)$]  $\textup{Init}^{i}\left(\underline{n}_{t}\right)={\displaystyle \min}\left\{ \textup{Init}^{i}\left(\underline{n}\right)\left|\underline{n}\neq\underline{\infty}\in\Gamma\cap\prod_{l=1,\ldots,r}P_{l}^{i,\left(t\right)}\right.\right\} $
and
\item[$\left(P_{2}\right)$] among the $\underline{n}_{t}$'s that satisfy the previous equality,
we choose the one for which the integer
\[
\left(\underline{n}_{t}\right)_{\textup{Init}^{i}\left(\underline{n}_{t}\right)}
\]
is the smallest possible.
\end{enumerate}
Let us show how the $\Gamma-$reduction 
\[
\underline{P}^{i,\left(t\right)}\xrightarrow{\underline{n}_{t},k_{t}}\underline{P}^{i,\left(t+1\right)}
\]
allows us to normalize $G^{i}.$ The $r-$uple $\underline{n}_{t}$
being an element of $\Gamma$, by definition, there exists a sum of
the form 
\[
W^{i,\left(t\right)}=\sum_{\beta\in\mathbb{N}^{2}}w_{\beta}^{i,\left(t\right)}\left(G^{1}\right)^{\beta_{1}}\left(G^{2}\right)^{\beta_{2}}
\]
such that $\nu\left(W^{i,\left(t\right)}\right)=\underline{n}_{t}$
and the coefficient of $t^{\left(\underline{n}_{t}\right)_{k_{t}}}$
in the $k_{t}^{\textup{th}}$ component is equal to $1.$ The difference
\begin{equation}
G^{i}-a_{k_{t}\left(\underline{n}_{t}\right)_{k_{t}}}^{i}W^{i,\left(t\right)}\label{eq:transformation}
\end{equation}
belongs to $\hat{\mathcal{O}}_{C}$ and the coefficient of $t^{\left(\underline{n}_{t}\right)_{k_{t}}}$
in the $k_{t}^{\textup{th}}$ component vanishes. By construction,
after a $\Gamma-$reduction, the new couple of functions defined by
(\ref{eq:transformation}) still generates $\hat{\mathcal{O}}_{C}$.
Doing the whole process of $\Gamma-$reductions for both $G^{i},~i=1,2$\emph{
}and a final truncation at $\sigma$, we obtain\emph{ a normalized
family of generators that we denote $\left(\mathfrak{N}^{i}\left(G^{i}\right)\right)_{i=1,2}$}.
By construction, following the properties $\left(P_{1}\right)$ and
$\left(P_{2}\right)$, a normalized family of generators $\left(\mathfrak{N}^{i}\left(G^{i}\right)\right)_{i=1,2}$
is written
\begin{equation}
\mathfrak{N}^{i}\left(G^{i}\right)=\left(\sum_{k\in P_{1}^{i,\left(q_{i}\right)}}a_{1k}^{i}t^{k},\cdots,t^{g_{l_{i}}^{i}}+\sum_{k\in P_{l_{i}}^{i,\left(q_{i}\right)}}a_{l_{i}k}^{i}t^{k},\cdots,\sum_{k\in P_{r}^{i,\left(q_{i}\right)}}a_{rk}^{i}t^{k}\right).\label{generators.normalized}
\end{equation}
The main characteristic of this normalized basis is that its parameters
are unique: indeed, $G$ and $G^{\prime}$ being two couples of normalized
generators as in (\ref{generators.normalized}), we consider the valuation
\[
\gamma=\nu\left(G^{i}-\left(G^{\prime}\right)^{i}\right).
\]
By definition, $\gamma$ is an element of $\Gamma$. By construction
of the normalized family, it is also an element of $\prod_{l=1,\ldots,r}P_{l}^{i,\left(q_{i}\right)}$.
Since $\underline{P}^{i,\left(q_{i}\right)}$ is $\Gamma-$reduced,
$\gamma$ is equal to $\underline{\infty}$ and $G^{i}$ and $\left(G^{\prime}\right)^{i}$
are equal. Therefore the normalized basis is unique and we can consider
the following \emph{well defined} map
\[
\mathbb{M}_{S}:\left\{ \begin{array}{rcl}
\top & \longrightarrow & \underset{{\scriptscriptstyle l,i}}{{\displaystyle \prod}}\mathbb{C}^{P_{l}^{i,\left(q_{i}\right)}}\\
\left(C,\overline{h}\right) & \longmapsto & \left(a_{lk}^{i}\right)
\end{array}\right.
\]
that associates to a marked curve in $\top$, the ordered coefficients
of a normalized family of generators of $\mathcal{O}_{C}.$

\subsection{$\protect\top$ as a constructible set.}

\global\long\def\GG{\prescript{\mathfrak{n}}{}{G}^{1}}%
\global\long\def\GGG{\prescript{\mathfrak{n}}{}{G}^{2}}%

In this section, we are going to prove the
\begin{prop}
The image of $\mathbb{M}_{S}$ is a constructible analytic set, i.e,
a finite union of finite intersections of algebraic subsets and complements
of algebraic subsets of the affine set $\underset{l,i}{{\displaystyle \prod}}\mathbb{C}^{P_{l}^{i,\left(q_{i}\right)}}$.
\end{prop}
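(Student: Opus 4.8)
The plan is to realize the image of $\mathbb{M}_{S}$ as the image, under an explicit algebraic map, of a pre-normalization locus that is manifestly constructible, and then to invoke Chevalley's theorem that the image of a constructible set by a morphism is constructible. Let $\mathcal{B}$ be the affine space parametrizing all pairs $\left(H^{1},H^{2}\right)$ of $r$-tuples of truncated polynomials
\[
H^{i}=\left(\sum_{k=g_{l}^{i}}^{\max\left(\sigma_{l}-1,\,g_{l}^{i}\right)}b_{lk}^{i}t^{k}\right)_{l=1,\ldots,r},
\]
so that, by Lemma \ref{truncation} and the existence of the conductor in Theorem \ref{basic-facts}, the value semiring of the subring generated by $H^{1},H^{2}$ depends only on this truncated data. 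Let $V\subset\mathcal{B}$ be the locus of those $\left(H^{1},H^{2}\right)$ whose linear parts are independent, whose leading valuations are $g^{1}$ and $g^{2}$, and for which the subring $R\subset\left(\mathbb{C}\left[\left[t\right]\right]\right)^{r}$ generated by $H^{1},H^{2}$ has value semiring equal to $\Gamma$. By Theorem \ref{basic-facts}, a point of $V$ is exactly a system of generators of $\hat{\mathcal{O}}_{C}$ for some $C$ topologically equivalent to $S$, carrying a marking; hence $\mathbb{M}_{S}\left(C,\overline{h}\right)$ is obtained from any such pair by the leading normalization and the prescribed sequence of $\Gamma$-reductions, and one has $\textup{Image}\left(\mathbb{M}_{S}\right)=\mathfrak{N}\left(V\right)$, where $\mathfrak{N}$ denotes the normalization map $\left(\mathfrak{N}^{i}\right)_{i=1,2}$ constructed above. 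It then suffices to prove that $V$ is constructible and that $\mathfrak{N}$ is regular on a finite constructible partition of $V$.

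The constructibility of $V$ rests on the finiteness furnished by the conductor. For each $\beta\in\mathbb{N}^{2}$ the coefficients of $\left(H^{1}\right)^{\beta_{1}}\left(H^{2}\right)^{\beta_{2}}$ truncated at $\sigma$ are polynomials in the coordinates $b=\left(b_{lk}^{i}\right)$, and only finitely many $\beta$ contribute below $\sigma$. Realizing a generator $g^{j}$ as a value of $R$ amounts to finding scalars $\left(c_{\beta}\right)$ such that $H:=\sum_{\beta}c_{\beta}\left(H^{1}\right)^{\beta_{1}}\left(H^{2}\right)^{\beta_{2}}$ has $H_{l}\equiv0$ modulo $t^{\sigma_{l}}$ for $l\notin\mathcal{I}_{g^{j}}$, has vanishing coefficients in degree $<g_{l}^{j}$ for $l\in\mathcal{I}_{g^{j}}$, and has a nonzero coefficient in some degree $g_{l}^{j}$. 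These are linear conditions on $\left(c_{\beta}\right)$ with matrix entries polynomial in $b$, supplemented by one non-vanishing linear form; the solvability of such a system is a rank condition on $b$, hence constructible. Realizing all of $g^{1},\ldots,g^{q}$ gives $\Gamma\subseteq\Gamma_{R}$. The reverse inclusion is encoded by requiring that each branch $t\mapsto\left(H_{l}^{1}(t),H_{l}^{2}(t)\right)$ carry the Puiseux characteristic prescribed by the topological type of $S^{l}$ and that the pairwise contact orders match those of $S$; by the classical description of equisingularity these are again finitely many polynomial equalities and non-equalities in $b$. Intersecting the finitely many constructible conditions over $j=1,\ldots,q$ and over the pairs of branches shows that $V$ is constructible.

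The normalization $\mathfrak{N}$ consists of the leading normalization, a division by a leading coefficient that is nonzero on $V$, followed by the fixed sequence of $\Gamma$-reductions $\underline{P}^{i}=\underline{P}^{i,\left(0\right)}\to\cdots\to\underline{P}^{i,\left(q_{i}\right)}$, each step of which subtracts a scalar multiple of the explicit combination $W^{i,\left(t\right)}$. Since the reduction sequence and the choices $\left(P_{1}\right),\left(P_{2}\right)$ depend only on $\Gamma$, every step is a regular operation whose only denominators are leading coefficients that are invertible on $V$, because the relevant value $\underline{n}_{t}\in\Gamma$ is realized there. Thus $\mathfrak{N}$ is regular on a finite constructible partition of $V$, and Chevalley's theorem yields that $\mathfrak{N}\left(V\right)=\textup{Image}\left(\mathbb{M}_{S}\right)$ is constructible.

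The step I expect to be the main obstacle is the equivalence implicitly used in defining $V$: that equality of the \emph{infinite} object $\Gamma_{R}$ with $\Gamma$ is captured by the \emph{finite} list of polynomial conditions above. The $\astrosun$-part causes no trouble, since $\nu$ is additive on products; the delicate point is the $\oplus$-part, where one must ensure that the coordinatewise minima can be realized \emph{simultaneously} across the several branches, so that no lattice point outside $\Gamma$ enters $\Gamma_{R}$ and no point of $\Gamma$ is missed. Closing this gap is exactly where the conductor truncation of Lemma \ref{truncation} and the finiteness and minimality of the generating set $g$, together with its interpretation as a standard basis in Theorem \ref{basic-facts}, must be used to reduce the semiring equality to the finitely many realizability and contact conditions, and hence to a constructible subset of $\mathcal{B}$.
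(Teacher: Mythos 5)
Your overall architecture (a pre-normalization locus $V$, a piecewise regular normalization map $\mathfrak{N}$, then Chevalley's theorem) is reasonable, and it genuinely differs from the paper, which works directly in the target space $\underset{l,i}{\prod}\mathbb{C}^{P_{l}^{i,\left(q_{i}\right)}}$ and cuts out the image by explicit constructible conditions, with no need of Chevalley. But there is a genuine gap at the central step, and it is the one you yourself flag in your last paragraph. Your realizability conditions for $g^{j}$ are not equivalent to $g^{j}\in\Gamma_{R}$: you only impose $H_{l}\equiv0\pmod{t^{\sigma_{l}}}$ on the components $l\notin\mathcal{I}_{g^{j}}$, whereas $g^{j}$ has the entry $\infty$ there, so the element you produce has valuation with finite entries $\geq\sigma_{l}$ at those places, not $g^{j}$ (also, one needs a nonzero coefficient in degree $g_{l}^{j}$ for \emph{every} $l\in\mathcal{I}_{g^{j}}$, not just some $l$). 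To correct such an element into one of valuation exactly $g^{j}$ you would need elements of $R$ of valuation $(\infty,\ldots,\infty,k,\infty,\ldots,\infty)$ with $k\geq\sigma_{l}$, and their existence in $R$ is part of the inclusion $\Gamma\subseteq\Gamma_{R}$ being established: the argument is circular. The paper avoids exactly this by multiplying by $h_{i}(G^{1},G^{2})$, where $h_{i}$ is a reduced equation of $\bigcup_{j\notin\mathcal{I}_{g^{i}}}C^{j}$: this factor vanishes identically on those components, so every candidate automatically has the correct infinite entries, and conversely every element of the ring with valuation $g^{i}$ is divisible by $h_{i}$, so the resulting rank condition is genuinely equivalent to $g^{i}\in\Gamma_{C}$.

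Because of this, the whole weight of your definition of $V$ falls on your condition (ii) --- that the parametrized branches carry the Puiseux characteristics and pairwise contact orders of $S$ --- which by Waldi's theorem is by itself equivalent to $\Gamma_{R}=\Gamma$ (making your condition (i) redundant). You assert its constructibility ``by the classical description of equisingularity'' but do not prove it; that assertion is of essentially the same depth as the proposition itself, and you concede in your closing paragraph that reducing the semiring equality to finitely many polynomial conditions is the unresolved obstacle. The paper closes precisely this gap with the result of Hernandes and de Carvalho: once $\Gamma\subseteq\Gamma_{C}$ is known, the equality $\Gamma=\Gamma_{C}$ is equivalent to $\Gamma\cap\prod_{l}\left[0,\sigma_{l}\right]=\Gamma_{C}\cap\prod_{l}\left[0,\sigma_{l}\right]$, i.e.\ to finitely many conditions of the form $\alpha\notin\Gamma_{C}$, each the negation of a rank condition of the same linear-algebra type as above. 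Without either the $h_{i}$ trick or this finite-box criterion (or an actual proof that the equisingularity stratum is constructible), your $V$ is not shown to be constructible, and the remaining steps (piecewise regularity of $\mathfrak{N}$ and Chevalley, which are otherwise essentially fine) have nothing to apply to.
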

\begin{proof}
Consider an element of $\underset{l,i}{{\displaystyle \prod}}\mathbb{C}^{P_{l}^{i,\left(q_{i}\right)}}$
and the associated couple $\left(G^{1},G^{2}\right)$ as in (\ref{generators.normalized}).
The complete ring generated by $G$ is the completion of the ring
of a plane curve $C$ with $r$ components $C^{1},\ldots,C^{r}$ given
by the coordinates of $G$. Fix some $i$ in $\left\{ 1,\ldots,r\right\} $.
We begin by proving that the condition
\[
g^{i}\in\Gamma_{C}
\]
is a constructible condition. Choose any reduced equation $h_{i}\left(x,y\right)$
of the curve 
\[
\bigcup_{j\notin\mathcal{I}_{g^{i}}}C^{j}.
\]
If $\mathcal{I}_{g^{i}}$ is empty, choose simply $h_{i}=1.$ Consider
$\mathcal{N}$ the finite set of couples of integers $\left(u,v\right)\in\mathbb{N}^{2}$
such that 
\[
\nu\left(h_{i}\left(G^{1},G^{2}\right)\left(G^{1}\right)^{u}\left(G^{2}\right)^{v}\right)\not\geq\sigma.
\]
and the set of expressions of the form 
\begin{equation}
h_{i}\left(G^{1},G^{2}\right)\times\sum_{\left(u,v\right)\in\mathcal{N}}\beta_{uv}\left(G^{1}\right)^{u}\left(G^{2}\right)^{v},\qquad\beta_{uv}\in\mathbb{C}\label{sumrelation}
\end{equation}
where the $\beta_{uv}$'s are coefficients. It follows that $g^{i}\in\Gamma_{C}$
is equivalent to the existence of a family $\left\{ \beta_{uv}\right\} _{uv}$
so that the expression (\ref{sumrelation}) has a valuation equal
to $g^{i}$. Let 
\[
L_{l,k}^{i}
\]
be the coefficient of $t^{k}$ in the $l^{th}$ component of (\ref{sumrelation}).
The functions $L_{l,k}^{i}$ are linear forms in the variables $\beta_{uv}$
whose coefficients are algebraic expressions in the coefficients of
the generators $G^{i}$. The condition $g^{i}\in\Gamma_{C}$ is equivalent
to require that for each $l=1,\ldots,r,$ the linear form $L_{l,g_{l}^{i}}^{i}$
is linearly independent of the linear forms $L_{l,k}^{i}$ for $l=1,\ldots,r$
and $k<g_{l}^{i}$. The latter condition is constructible one in the
coefficients of the generators $G^{i}$ since it can be expressed
using the ranks of the minors of the matrix of these linear forms.
It follows that $g^{i}\subset\Gamma_{C}$ and thus 
\[
\Gamma\subset\Gamma_{C}
\]
is a constructible condition. We can now proceed analogously to prove
that $\Gamma=\Gamma_{C}$ is also a constructible condition : indeed,
according to \cite{Hernandes.semiring}, providing that $\Gamma\subset\Gamma_{C}$,
the equality $\Gamma=\Gamma_{C}$ is equivalent to the equality 
\[
\Gamma\cap\prod_{i=1}^{r}\left[0,\sigma_{l}\right]=\Gamma_{C}\cap\prod_{i=1}^{r}\left[0,\sigma_{l}\right]
\]
which induces a finite number of conditions, that can be proven to
be constructible with similar arguments.
\end{proof}

\subsection{Action on $\protect\top$.}

The group $\left(\textup{Diff}\left(\CC,0\right)\right)^{r}$ acts
on the image of $\mathfrak{\mathbb{M}}_{S}$ the following way : given
a point $A$ in the image, consider its corresponding couple of generators
$\left(G^{1},G^{2}\right).$ Take an element $\phi\in\left(\textup{Diff}\left(\CC,0\right)\right)^{r}$
and right compose $G^{i},\ i=1,2,$ by $\phi$ ; apply the process
of normalization following a sequence of $\Gamma-$reductions initially
fixed and truncate the final expressions. In the end, the coefficients
of the new normalized couple of generators 
\[
\left(\mathfrak{N}^{i}\left(G^{i}\circ\phi\right)\right)_{i=1,2}
\]
corresponds to some expressions $\phi\cdot A$ which depends on $A$
and $\phi.$
\begin{lem}
The application $\left(\phi,A\right)\to\phi\cdot A$ is an action.
More precisely, for any $\phi,\psi$ in $\left(\textup{Diff}\left(\CC,0\right)\right)^{r}$
\[
\phi\cdot\left(\psi\cdot A\right)=\left(\psi\circ\phi\right)\cdot A.
\]
\end{lem}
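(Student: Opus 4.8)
The plan is to rephrase the construction entirely in terms of the subrings of $\left(\mathbb{C}\left[\left[t\right]\right]\right)^{r}$ that the points of the image encode, so that the action becomes transparent. To a point $A$ in the image of $\mathbb{M}_{S}$ I attach its normalized couple of generators $\left(G^{1},G^{2}\right)$ and the subring $R_{A}\subset\left(\mathbb{C}\left[\left[t\right]\right]\right)^{r}$ they generate, which is precisely the image $\hat{\mathcal{O}}_{C}$ under the monomorphism (\ref{epimorpshims}) of the marked curve corresponding to $A$. Right composition by $\phi=\left(\phi_{1},\ldots,\phi_{r}\right)$ is the restriction of the diagonal formal automorphism $\Phi_{\phi}$ of $\left(\mathbb{C}\left[\left[t\right]\right]\right)^{r}$ sending $\left(v_{l}\right)_{l}$ to $\left(v_{l}\circ\phi_{l}\right)_{l}$; since each $\phi_{l}$ preserves the valuation and the independence of the linear parts, $\Phi_{\phi}\left(R_{A}\right)$ is again the ring of a marked curve in $\top$ sharing the semiring $\Gamma$, and its normalized generators are exactly $\left(\mathfrak{N}^{i}\left(G^{i}\circ\phi\right)\right)_{i=1,2}$. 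Hence $\phi\cdot A$ is, by definition, the point of the image attached to the ring $\Phi_{\phi}\left(R_{A}\right)$.

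The first step I would isolate is the observation that the point produced by the normalization procedure depends only on the subring generated, not on the generators fed into it. Each $\Gamma$-reduction alters $G^{i}$ by subtracting an element $W^{i,\left(t\right)}$ of $R_{A}$, so the family $\left(\mathfrak{N}^{i}\left(G^{i}\right)\right)_{i}$ still generates $R_{A}$ and lies in the normal form (\ref{generators.normalized}); by the uniqueness of that normal form, established just before the definition of $\mathbb{M}_{S}$, it is the unique normalized generating family of $R_{A}$. Consequently, normalizing any system of generators of a given ring with two independent linear parts and valuations $g^{1}<g^{2}$ returns one and the same point.

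Granting this, the verification reduces to a short computation. One checks directly the anti-multiplicativity $\Phi_{\phi}\circ\Phi_{\psi}=\Phi_{\psi\circ\phi}$, since $\left(v_{l}\circ\psi_{l}\right)\circ\phi_{l}=v_{l}\circ\left(\psi_{l}\circ\phi_{l}\right)$. Therefore $\phi\cdot\left(\psi\cdot A\right)$ is the point attached to $\Phi_{\phi}\left(\Phi_{\psi}\left(R_{A}\right)\right)=\Phi_{\psi\circ\phi}\left(R_{A}\right)$, which is the very ring attached to $\left(\psi\circ\phi\right)\cdot A$. As both points arise by normalizing generators of the same ring, the first step forces their coefficients to coincide, giving $\phi\cdot\left(\psi\cdot A\right)=\left(\psi\circ\phi\right)\cdot A$; the relation $\textup{id}\cdot A=A$ is immediate because $\mathfrak{N}$ fixes an already normalized family.

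I expect the main obstacle to be the first step rather than the bookkeeping. Indeed, computing $\phi\cdot\left(\psi\cdot A\right)$ feeds the procedure with $\mathfrak{N}^{i}\left(G^{i}\circ\psi\right)\circ\phi$, an a priori different system of generators of $\Phi_{\psi\circ\phi}\left(R_{A}\right)$ than the $G^{i}\circ\left(\psi\circ\phi\right)$ used for $\left(\psi\circ\phi\right)\cdot A$; one must verify that both truly land on the same normalized output. This is exactly where the independence of $\mathfrak{N}$ from the chosen generators must be invoked, together with the fact that composing by $\phi$ leaves the valuations $g^{1},g^{2}$ and the independence of the linear parts untouched, so that every hypothesis required to run the normalization on $\mathfrak{N}^{i}\left(G^{i}\circ\psi\right)\circ\phi$ is indeed met.
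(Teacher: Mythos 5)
Your proof is correct and takes essentially the same route as the paper: both arguments rest on the uniqueness of the normalized generating family of a fixed subring of $\left(\mathbb{C}\left[\left[t\right]\right]\right)^{r}$ (forced by the $\Gamma$-reducedness of $\underline{P}^{i,\left(q_{i}\right)}$, since the difference of two normalized families has valuation in $\Gamma\cap\prod_{l}P_{l}^{i,\left(q_{i}\right)}=\left\{ \underline{\infty}\right\} $), applied to the two normalizations of generators of the one common ring, which the paper writes as $\left(\psi\circ\phi\right)^{\star}\mathcal{O}_{C}$ and you write as $\Phi_{\psi\circ\phi}\left(R_{A}\right)$. Your packaging via the diagonal automorphisms $\Phi_{\phi}$ and their anti-multiplicativity $\Phi_{\phi}\circ\Phi_{\psi}=\Phi_{\psi\circ\phi}$ merely makes explicit what the paper leaves implicit, so the two proofs coincide in substance.
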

\begin{proof}
For $i=1,2,$ consider a normalized basis $\left(G^{1},G^{2}\right)$
and the two following normalizations 
\[
\left(\mathfrak{N}^{i}\left(G^{i}\circ\psi\circ\phi\right)\right)_{i=1,2}\textup{ and }\left(\mathfrak{N}^{i}\left(\mathfrak{N}^{i}\left(G^{i}\circ\psi\right)\circ\phi\right)\right)_{i=1,2}.
\]
Both are normalized bases of the ring 
\[
\left(\psi\circ\phi\right)^{\star}\mathcal{O}_{C}=\left\{ \gamma\circ\psi\circ\phi\left|\gamma\in\mathcal{O}_{C}\right.\right\} .
\]
The rings $\mathcal{O}_{C}$ and $\left(\psi\circ\phi\right)^{\star}\mathcal{O}_{C}$
share the same semiring of valuations $\Gamma.$ Thus, for $i=1,2$,
the valuation 
\[
\nu\left(\mathfrak{N}^{i}\left(G^{i}\circ\psi\circ\phi\right)-\mathfrak{N}^{i}\left(\mathfrak{N}^{i}\left(G^{i}\circ\psi\right)\circ\phi\right)\right)
\]
is an element of $\Gamma\cap\underset{l,i}{{\displaystyle \prod}}P_{l}^{i,\left(q_{i}\right)}.$
Since $\underline{P}^{i,\left(q_{i}\right)}$ is $\Gamma-$reduced,
this valuation is $\underline{\infty}$ and one has 
\[
\mathfrak{N}^{i}\left(G^{i}\circ\psi\circ\phi\right)=\mathfrak{N}^{i}\left(\mathfrak{N}^{i}\left(G^{i}\circ\psi\right)\circ\phi\right),
\]
which is the lemma.
\end{proof}
Let us denote by $\textup{Diff}^{c}\left(\CC,0\right)$ the quotient
of $\textup{Diff}\left(\CC,0\right)$ by the normal subgroup of elements
of the form
\[
t\to t+ut^{c}+\cdots.
\]
The truncation at $\sigma$ being part of the normalization process,
it follows that the previous action factorizes through 
\begin{equation}
\prod_{i=1}^{r}\textup{Diff}^{\sigma_{i}}\left(\CC,0\right).\label{group}
\end{equation}
Since the group (\ref{group}) is a connected solvable algebraic group,
Theorem \ref{theorem.structure.moduli} follows from Lemma \ref{LemmeIsoAnneau}
and the previous constructions.

\subsection{An example.}

Let $S$ be the curve $y\left(y^{2}-x^{3}\right)=0$. Figure (\ref{fig:Semiring-of-values})
shows the semiring $\Gamma_{S}.$ In this rather simple situation,
it can computed \emph{by hand. }In the general case, there exist algorithms
to compute the semiring of a curve with several components - see \cite{semigroup}.
\begin{figure}[H]
\begin{centering}
\includegraphics[scale=0.3]{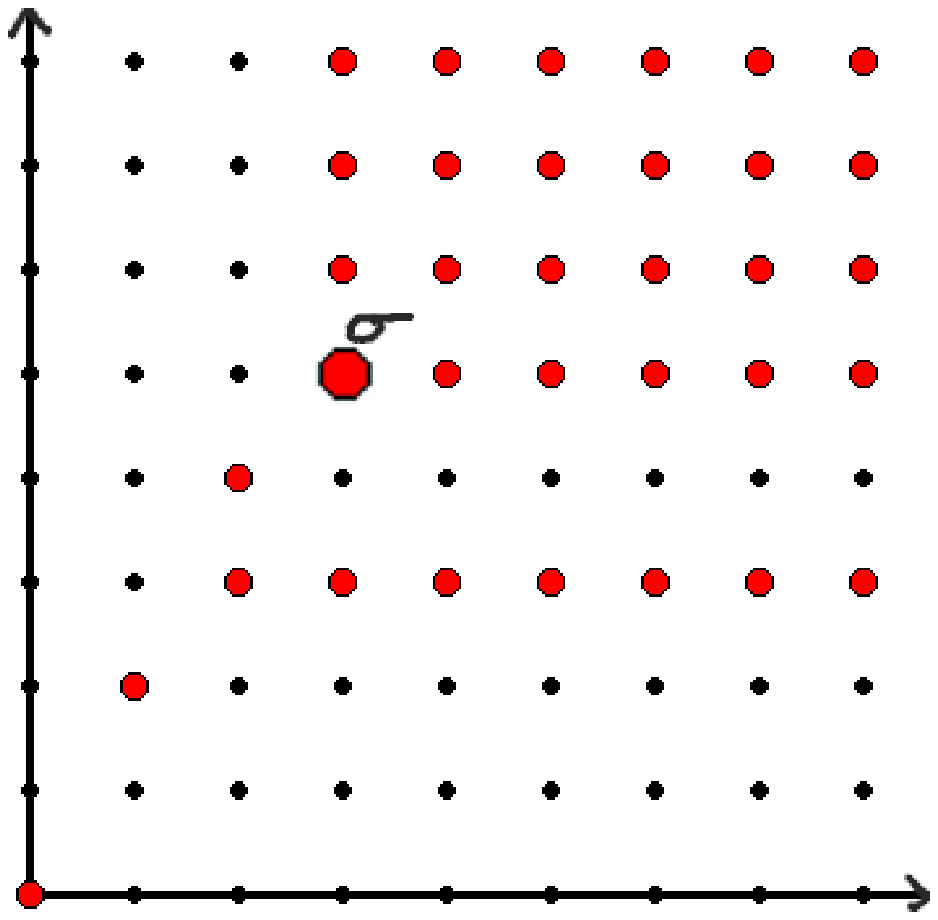}
\par\end{centering}
\caption{Se\label{fig:Semiring-of-values}miring of values the curve $\left\{ y\left(y^{2}-x^{3}\right)=0\right\} .$}
\end{figure}

Let $\left(C,\overline{h}\right)$ be in $\textup{Top}^{\bullet}\left(S\right)$.
The conductor $\sigma$ of $\Gamma$ is $\left(3,5\right)$. The set
of absolute points of $\Gamma$ is 
\[
\left\{ \left(1,2\right),\left(2,4\right),\left(3,\infty\right),\left(\infty,3\right)\right\} .
\]
Since $\left(2,4\right)=2\times\left(1,2\right)$, following \cite{Hernandes.semiring},
the set that minimaly generates $\Gamma$ as semiring is 
\[
g=\left\{ \left(1,2\right),\left(3,\infty\right),\left(\infty,3\right)\right\} .
\]

Since $\nu\left(x\right)=\left(1,2\right)$ and $\nu\left(y\right)=\left(\infty,3\right)$,
applying Lemma \ref{truncation} leads to a couple of generators that
are written 
\[
\left\{ G^{1}=\left(a_{11}^{1}t+a_{12}^{1}t^{2},a_{22}^{1}t^{2}+a_{23}^{1}t^{3}+a_{24}^{1}t^{4}\right),\ G^{2}=\left(0,a_{13}^{2}t^{3}+a_{14}^{2}t^{4}\right)\right\} 
\]
with $a_{11}^{1}\neq0,\ a_{22}^{1}\neq0,~a_{13}^{2}\neq0.$ Normalizing
some initial non vanishing coefficients provides the following couple
of generators
\[
\left\{ G^{1}=\left(t+a_{12}^{1}t^{2},a_{22}^{1}t^{2}+a_{23}^{1}t^{3}+a_{24}^{1}t^{4}\right),\ G^{2}=\left(0,t^{3}+a_{14}^{2}t^{4}\right)\right\} .
\]
To reduce $G^{1}$ we consider the following data
\[
P_{1}^{1,\left(0\right)}=\left\{ \infty,2\right\} \qquad P_{2}^{1,\left(0\right)}=\left\{ \infty,2,3,4\right\} ,
\]
and the two successive $\Gamma_{S}-$reductions defined as follows
\[
\begin{array}{ccl}
\underline{P}^{1,\left(0\right)} & \xrightarrow{\left(2,3\right),1} & \underline{P}^{1,\left(1\right)}=\left(\left\{ \infty\right\} ,\left\{ \infty,2,3,4\right\} \right).\\
\underline{P}^{1,\left(1\right)} & \xrightarrow{\left(\infty,3\right),2} & \underline{P}^{1,\left(2\right)}=\left(\left\{ \infty\right\} ,\left\{ \infty,2,4\right\} \right)
\end{array}
\]
Observe $\underline{P}^{1,\left(2\right)}$ is $\Gamma_{S}-$reduced.
Since $\nu\left(\left(G^{1}\right)^{2}+G^{2}\right)=\left(2,3\right)$,
the transformation associated to the first $\Gamma_{S}-$reduction
as in (\ref{eq:transformation}) is written
\[
G^{1}-a_{12}^{1}\left(\left(G^{1}\right)^{2}+G^{2}\right)=\left(t+t^{3}\left(\cdots\right),\left(\star\right)t^{2}+\left(\star\right)t^{3}+\left(\star\right)t^{4}+t^{5}\left(\cdots\right)\right),
\]
which leads to a new generator that we still denote by $G^{1}$. Noticing
that $\nu\left(G^{2}\right)=\left(\infty,3\right)$ yields the transformation
\[
G^{1}-\left(\star\right)G^{2}.
\]
The truncation at $\sigma=\left(3,5\right)$ finishes the normalization
of $G^{1}.$ The generator $G^{2}$ is already normalized since $\left(\infty,4\right)\notin\Gamma$.

Therefore, the normalized family $G$ has for final form 
\begin{equation}
\left\{ G^{1}=\left(t,at^{2}+bt^{4}\right),G^{2}=\left(0,t^{3}+ct^{4}\right)\right\} \label{eq:generators}
\end{equation}
and its elements depends only on $\mathfrak{E}_{\left(C,\overline{h}\right)}$.
The map $\mathbb{M}_{S}$ is defined by 
\[
\mathbb{M}_{S}:\left\{ \begin{array}{ccc}
\textup{Top}^{\bullet}\left(S\right) & \to & \mathbb{C}^{3}\\
\left(C,\overline{h}\right) & \mapsto & \left(a,b,c\right)
\end{array}\right.
\]
following (\ref{eq:generators}). By construction if $a\neq0,$ any
curve $C$ associated to a ring generated by such a family admits
a semiring of values $\Gamma_{C}$ that contains $\left(1,2\right)$
and $\left(\infty,3\right).$ It can be checked that $a\neq0$ is
the sole condition to ensure that actually, $\Gamma_{C}=\Gamma_{S}.$
Thus the image of $\mathbb{M}_{S}$ is the constructible set $\mathbb{C}\setminus\left\{ 0\right\} \times\CC^{2}.$

Let us compute the action of $\phi\in\textup{Diff}^{3}\left(\CC,0\right)\times\textup{Diff}^{5}\left(\CC,0\right)$
on $A\in\mathbb{C}\setminus\left\{ 0\right\} \times\mathbb{C}^{2}$
induced by the present construction where 
\[
\phi=\left(ut+vt^{2},\alpha t+\beta t^{2}+\gamma t^{3}+\Delta t^{4}\right)\textup{ and }A=\left(a,b,c\right).
\]
The action of $\phi$ on $A$ is written 
\begin{align*}
\phi\cdot A & =\left(\frac{a\alpha^{2}}{u},\frac{-a^{2}\alpha^{4}v-2a\alpha^{2}\beta cu^{2}+\alpha^{4}bu^{2}+2a\alpha\gamma u^{2}-5a\beta^{2}u^{2}}{u^{3}},\alpha c+3\frac{\beta}{\alpha}\right)
\end{align*}
and the quotient reduces to the class of the point $\left(1,0,0\right)$.
As a matter of fact, the curve $S$ has no moduli \cite{PaulGen}.

\section{Optimal vector field for a germ of curve $S$.}

The space $\moduli$ is now endowed with a complex structure. The
remainder of the article is interested in the generic dimension of
$\moduli$. In order to reach this purpose, subsequently, we proceed
to the study of the module of vector fields tangent to $S.$

Let $S$ be a germ of curve in $\left(\mathbb{C}^{2},0\right)$ and
$f$ a reduced equation of $S.$ Throughout this article, $\Der\left(\log S\right)$
will stand for be the $\mathcal{O}_{\left(\mathbb{C}^{2},0\right)}-$
module of vector fields tangent to $S,$that is such that the set
of vector fields $X$ such that 
\[
X\cdot f\in\left(f\right).
\]
It will be called \emph{the Saito module of $S$ }in reference to
\cite{MR586450}. Associated to the latter, we consider the following
analytical invariant
\begin{defn}
The \emph{Saito number} of $S$ is the integer 
\[
\mathfrak{s}\left(S\right)=\min_{X\in\Der\left(\log S\right)}\nu\left(X\right),
\]
where $\nu$ is the valuation defined by
\[
\nu\left(a\partial_{x}+b\partial_{y}\right)=\min\left(\nu\left(a\right),\nu\left(b\right)\right).
\]
\end{defn}
According to \cite{MR586450}, the Saito module of $S$ is a free
$\mathcal{O}_{\left(\mathbb{C}^{2},0\right)}-$ module of rank $2.$
If $\left\{ X_{1},X_{2}\right\} $ is one of its basis, said to be
\emph{a Saito basis for $S$}, it is easily seen that the number of
Saito of $S$ satisfies 
\[
\mathfrak{s}\left(S\right)=\min\left(\nu\left(X_{1}\right),\nu\left(X_{2}\right)\right).
\]
Following again \cite{MR586450}, $\left\{ X_{1},X_{2}\right\} $
is a Saito basis for $S$ if and only if the following property holds.
\begin{criterion*}[Criterion of Saito]
$\left\{ X_{1},X_{2}\right\} $ is a Saito basis for $S$ if and
only if there exists a germ of unit $u$ such that 
\begin{equation}
X_{1}\wedge X_{2}=uf,\label{Saito.fondamental.wedge}
\end{equation}
where $\cdot\wedge\cdot$ stands for determinant of the vector fields
in any coordinates.
\end{criterion*}
The property (\ref{Saito.fondamental.wedge}) will be referred to
as \emph{the criterion of Saito.} Evaluating the valuation of (\ref{Saito.fondamental.wedge})
gives the inequality
\begin{equation}
\nu\left(X_{1}\right)+\nu\left(X_{2}\right)\leq\nu\left(X_{1}\wedge X_{2}\right)=\nu\left(f\right)=\nu\left(S\right).\label{inegalite.fondamentale.saito}
\end{equation}
In particular, one has 
\begin{equation}
\mathfrak{s}\left(S\right)\leq\frac{\nu\left(S\right)}{2}.\label{saito.moitie.de.la.courbe}
\end{equation}

\begin{defn}
A vector field $X\in\Der\left(\log S\right)$ is said to be \emph{optimal}
for $S$ if $\nu\left(X\right)=\mathfrak{s}\left(S\right).$
\end{defn}
\begin{example}
Let $S$ be the double cusp given by 
\[
S=\left\{ \left(x^{2}-y^{3}\right)\left(y^{2}-x^{3}\right)=0\right\} .
\]
Then an optimal vector field can be given by 
\[
X=\left(2x^{2}+\frac{5}{2}y^{3}-\frac{9}{2}x^{3}y\right)\partial_{x}+\left(3xy-3x^{2}y^{2}\right)\partial_{y}.
\]
In particular 
\[
\mathfrak{s}\left(S\right)=2.
\]
\end{example}
\begin{prop}
If $X$ is optimal for $S$, then there exists a vector field $Y$
such that $\left\{ X,Y\right\} $ is a Saito basis for $S.$
\end{prop}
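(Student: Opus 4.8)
The plan is to complete $X$ using an arbitrary Saito basis, reducing the statement to a linear-algebra fact over the local ring $\mathcal{O}_{\left(\CC^{2},0\right)}$ in which optimality forces a coefficient to be a unit. Since $\Der\left(\log S\right)$ is free of rank $2$ by Saito, I would fix any Saito basis $\left\{ X_{1},X_{2}\right\} $, ordered so that $\nu\left(X_{1}\right)\leq\nu\left(X_{2}\right)$; then $\mathfrak{s}\left(S\right)=\min\left(\nu\left(X_{1}\right),\nu\left(X_{2}\right)\right)=\nu\left(X_{1}\right)$. As $X\in\Der\left(\log S\right)$, I would write
\[
X=aX_{1}+bX_{2},\qquad a,b\in\mathcal{O}_{\left(\CC^{2},0\right)}.
\]
By the criterion of Saito it then suffices to produce $Y$ with $X\wedge Y=\left(\textup{unit}\right)f$.

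The key step is to show that optimality of $X$ forces $a$ or $b$ to be a unit. First I would record two elementary facts about $\nu$: it is additive on products, $\nu\left(cZ\right)=\nu\left(c\right)+\nu\left(Z\right)$ for $c\in\mathcal{O}_{\left(\CC^{2},0\right)}$ and $Z$ a germ of vector field, and ultrametric on sums. Arguing by contradiction, suppose neither $a$ nor $b$ is a unit, so that $\nu\left(a\right),\nu\left(b\right)\geq1$. Then
\[
\nu\left(X\right)\geq\min\left(\nu\left(a\right)+\nu\left(X_{1}\right),\,\nu\left(b\right)+\nu\left(X_{2}\right)\right)\geq1+\nu\left(X_{1}\right)>\nu\left(X_{1}\right)=\mathfrak{s}\left(S\right),
\]
where the middle inequality uses $\nu\left(X_{1}\right)\leq\nu\left(X_{2}\right)$. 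This contradicts $\nu\left(X\right)=\mathfrak{s}\left(S\right)$, so one of $a,b$ must be a unit.

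Finally I would complete $X$ explicitly. If $a$ is a unit, set $Y=X_{2}$; then bilinearity and antisymmetry of the wedge give $X\wedge Y=a\left(X_{1}\wedge X_{2}\right)=auf$, a unit times $f$. If instead $b$ is a unit, set $Y=X_{1}$, which yields $X\wedge Y=-b\left(X_{1}\wedge X_{2}\right)=-buf$. In either case the criterion of Saito shows that $\left\{ X,Y\right\} $ is a Saito basis for $S$. The only substantive point is the unit argument of the middle paragraph; everything else is formal, so I expect no real obstacle beyond confirming the additivity of $\nu$ on products, which holds because $\CC\left\{ x,y\right\} $ is a domain and the initial forms of the factors do not cancel.
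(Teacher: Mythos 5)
Your proof is correct and takes essentially the same route as the paper: write $X=aX_{1}+bX_{2}$ in an arbitrary Saito basis, observe that optimality forces one coefficient to be a unit, and conclude by the criterion of Saito that pairing $X$ with the other basis element gives a Saito basis. The only difference is that you spell out the ultrametric/valuation argument showing a coefficient must be a unit, a step the paper asserts in one line.
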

\begin{proof}
Let $\left\{ X_{1},X_{2}\right\} $ be any Saito basis for $S.$ According
to the criterion of Saito, there exists a unit $u$ such that 
\begin{equation}
X_{1}\wedge X_{2}=uf.\label{eq:saito.critere}
\end{equation}
Since $\left\{ X_{1},X_{2}\right\} $ is a basis, there exist functions
$u_{i},~i=1,2$ such that 
\[
X=u_{1}X_{1}+u_{2}X_{2}.
\]
Since $\nu\left(X\right)=\mathfrak{s}\left(S\right)=\min\left(\nu\left(X_{1}\right),\nu\left(X_{2}\right)\right)$,
for some $i$, say $i=1$, $u_{i}$ is a unit. Then, using (\ref{eq:saito.critere})
yields 
\[
X\wedge X_{2}=u_{1}uf.
\]
and thus, $\left\{ X,X_{2}\right\} $ is a Saito basis for $S.$
\end{proof}

\subsection{Curve of radial type.}

Let $E$ be the single blowing-up at $0$. The total space of the
blowing-up will be denoted by $\mathcal{M}$, 
\[
E:\left(\mathcal{M},D\right)\to\left(\mathbb{C}^{2},0\right).
\]
For any curve $S,$ $S^{E}$ will stand for the strict transform of
$S$ by $E$, that is the closure in $\mathcal{M}$ of $E^{-1}\left(S\setminus\left\{ 0\right\} \right).$
Moreover, for any vector field $Y,$ $Y^{E}$ will be the blown-up
vector field $E^{\star}Y$ divided by the maximal power of a local
equation of $D.$
\begin{defn}
Let $Y$ be a germ of vector field in $\left(\mathbb{C}^{2},0\right)$.
It is is said \emph{dicritical }if $Y^{E}$ is generically transverse
to the exceptional divisor $D$.
\end{defn}
A vector field $X$ being dicritical, it can be written in some coordinates
$\left(x,y\right)$
\[
X=R\left(x,y\right)\left(x\partial_{x}+y\partial_{y}\right)+\left(\cdots\right)
\]
where $R$ is an homogeneous polynomial function and $\left(\cdots\right)$
stands for higher order terms. Suppose that $X$ is optimal for $S$
and let $Y$ be such that $\left\{ X,Y\right\} $ is a basis of $\Der\left(\log S\right)$.
For any couple of non-vanishing functions $\left(a,b\right)$, the
initial part of $aX+bY$, that is, its homogeneous part of smallest
degree, is written
\[
a\left(0\right)R\left(x,y\right)\left(x\partial_{x}+y\partial_{y}\right)+b\left(0\right)Y^{\left(\mathfrak{s}\left(S\right)\right)}.
\]
where $Y^{\left(\star\right)}$ stands for the homogeneous part of
degree $\star$ of $Y.$ If $Y$ is optimal and not dicritical then
for $a$ and $b$ generic, $aX+bY$ is not dicritical. Which is why,
we consider the following definition
\begin{defn}
$S$ is said to be \emph{of radial type} if all optimal vector fields
for $S$ are dicritical.
\end{defn}

\subsection{Flat Saito basis.}

In this section, we are going to identify an open dense set $\text{\ensuremath{U}}\subset\moduli$
for which, the Saito basis of $C\in U$, can be extended locally around
$C$ in $\moduli$ into a family of Saito bases. Further on, an example
will illustrate that this property holds only generically.
\begin{thm}
\label{Flat.saito.basis}There exist an open dense set $U\subset\moduli$
on which the Saito number is constant. More precisely, for any $\left(C,\overline{h}\right)\in U$,
there exists a germ of analytical family of vector fields 
\[
c\in\left(\moduli,\left(C,\overline{h}\right)\right)\mapsto X_{i}\left(c\right),\ i=1,2
\]
 such that for any $c$, the family $\left\{ X_{1}\left(c\right),X_{2}\left(c\right)\right\} $
is a Saito basis for $c$ with 
\[
\forall c\in\left(\moduli,\left(C,\overline{h}\right)\right),\ \nu\left(X_{1}\left(c\right)\right)=\mathfrak{s}\left(c\right).
\]
\end{thm}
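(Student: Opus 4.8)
The plan is to establish Theorem~\ref{Flat.saito.basis} by combining two ingredients: the semicontinuity of the Saito number $\mathfrak{s}$ over the moduli space, which yields the open dense set $U$ on which $\mathfrak{s}$ is constant, and a deformation-theoretic argument \`a la G\'omez-Mont which lifts an optimal Saito basis at a point of $U$ to a flat family over a neighborhood. First I would show that $c\mapsto\mathfrak{s}\left(c\right)$ is upper semicontinuous. The bound \eqref{saito.moitie.de.la.courbe} gives a uniform ceiling $\mathfrak{s}\left(c\right)\leq\nu\left(S\right)/2$, and for a fixed candidate valuation $k$ the existence of a vector field $X\in\Der\left(\log C\right)$ with $\nu\left(X\right)\leq k$ satisfying $X\cdot f_{c}\in\left(f_{c}\right)$ is a closed condition in the coefficients, since tangency is expressed by the vanishing of finitely many polynomial expressions in the jets of the coefficients of $X$ and of a reduced equation $f_{c}$ depending algebraically on $c$ through the parametrization $\mathbb{M}_{S}$. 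Upper semicontinuity of $\mathfrak{s}$ together with the fact that it takes integer values bounded above then forces $\mathfrak{s}$ to be locally constant on the dense open set $U$ where it attains its generic (minimal) value.

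Next, fixing $\left(C,\overline{h}\right)\in U$ and an optimal vector field $X$ for $C$, the Proposition just proved provides $Y$ so that $\left\{ X,Y\right\}$ is a Saito basis for $C$, i.e. $X\wedge Y=uf_{C}$ for a unit $u$. The heart of the argument is to deform this basis analytically. I would phrase the deformation of $C$ as a deformation of the associated singular foliation (or of the pair (ambient space, curve)) and invoke G\'omez-Mont's theory of infinitesimal deformations of foliations~\cite{Gomez}: the obstruction to extending a tangent vector field along the deformation lives in a coherent cohomology group, and flatness of the relevant direct-image sheaves — guaranteed precisely by the constancy of $\mathfrak{s}$ on $U$, which pins down the numerical type of the Saito module — makes these obstruction spaces vanish or be of locally constant dimension. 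Concretely, one writes $f_{c}$ for a family of reduced equations depending analytically on $c\in\left(\moduli,\left(C,\overline{h}\right)\right)$, seeks $X_{i}\left(c\right)$ as convergent power series in $c$ with $X_{i}\left(0\right)=X,Y$, and solves order by order the tangency condition $X_{i}\left(c\right)\cdot f_{c}\in\left(f_{c}\right)$ together with the Saito determinant condition $X_{1}\left(c\right)\wedge X_{2}\left(c\right)=u\left(c\right)f_{c}$.

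The Saito criterion is what guarantees that, once the two deformed fields remain tangent and their wedge stays a unit multiple of $f_{c}$, they automatically form a basis of $\Der\left(\log c\right)$ for every nearby $c$; this converts the extension problem from a statement about the whole module to a statement about a single determinant, which is far more tractable. The valuation constraint $\nu\left(X_{1}\left(c\right)\right)=\mathfrak{s}\left(c\right)=\mathfrak{s}\left(C\right)$ is then preserved because $\nu\left(X\right)$ cannot jump down below the generic value (by minimality of $\mathfrak{s}$ on $U$) nor up (one arranges the leading homogeneous part of $X_{1}\left(c\right)$ to vary continuously and stay nonzero of the prescribed degree, using that $\mathfrak{s}$ is constant so no lower-degree tangent field appears).

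The main obstacle I expect is the convergence and the vanishing of the relevant obstruction group: solving the tangency equations order by order is formal bookkeeping, but to obtain a genuine \emph{analytic} (convergent) family one must either appeal to an artinian-to-analytic algebraization principle or exhibit the $X_{i}\left(c\right)$ as images under an analytic splitting of the tangency map whose existence hinges on the locally constant rank furnished by the constancy of $\mathfrak{s}$. In effect, the delicate point is to show that the $\mathcal{O}$-module structure of $\Der\left(\log c\right)$ — in particular the degree of the leading term of an optimal field — does not degenerate as $c$ varies within $U$, so that G\'omez-Mont's deformation machinery applies uniformly; this is exactly where the semicontinuity analysis of the first step must be upgraded from ``$\mathfrak{s}$ is constant'' to ``the Saito module deforms flatly,'' and it is the step that genuinely uses the freeness of the Saito module specific to plane curves rather than purely formal manipulations.
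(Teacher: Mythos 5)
Your proposal has two genuine gaps, both located at the heart of the theorem rather than at its periphery.

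First, the semicontinuity step is simultaneously unproved and pointed in the wrong direction. You assert that, for fixed $k$, the existence of $X\in\Der\left(\log C\right)$ with $\nu\left(X\right)\leq k$ is a \emph{closed} condition on $c$, and then conclude that $\mathfrak{s}$ is constant on the open dense set where it attains its \emph{minimal} value. These two claims are incompatible: if the sublevel sets $\left\{ c\mid\mathfrak{s}\left(c\right)\leq k\right\} $ are closed, then it is the locus where $\mathfrak{s}$ attains its \emph{maximum} that is open, while the minimal locus is closed. The ``generic = minimal'' reading is in fact refuted by Example \ref{Example.5.droites} of the paper: for the topological class of five transversal lines, the curves with $\mathfrak{s}=1$ (the unions of five lines, which admit the radial field) form a proper special subset, and the nearby curves with $t_{3}\neq0$ have $\mathfrak{s}=2$; the minimum of $\mathfrak{s}$ is attained only on a nowhere dense set. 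Moreover, the closedness claim itself is not a routine jet computation: tangency $X\cdot f_{c}\in\left(f_{c}\right)$ involves an auxiliary cofactor $g$, so the existence of such an $X$ is the projection onto the $X$-coordinates of the kernel of a linear system in $\left(X,g\right)$, and dimensions of such projections are not semicontinuous in either direction without a genuine argument --- this is exactly why special curves can acquire extra low-valuation tangent fields. A salvage would be to prove constructibility of the level sets of $\mathfrak{s}$, which suffices for local constancy on an open dense set, but even that only yields the weak half of the statement.

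Second, and more seriously, the proposal contains no working mechanism for the actual content of the theorem: the existence of an \emph{analytic} family of Saito bases. Knowing that all curves near $C$ admit optimal fields of the same valuation does not produce a family $c\mapsto X_{i}\left(c\right)$ varying analytically; the passage from pointwise numerics to an analytic family is precisely the assertion to be proved, and your text defers it (``flatness of the relevant direct-image sheaves guaranteed by the constancy of $\mathfrak{s}$'' is asserted, never established --- your final paragraph concedes this is the delicate point). Invoking G\'omez-Mont here is also circular with respect to the paper's logic: in the paper, G\'omez-Mont's exact sequence is used in Lemma \ref{lem:vanishing-map} \emph{taking Theorem \ref{Flat.saito.basis} as input} (flatness of the basis gives the surjectivity of $\mathbb{H}^{1}\left(\mathcal{M},\Theta_{X_{1}}\right)\to H^{1}\left(\mathcal{M},\Theta_{S}\right)$), not the other way around. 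The paper's construction bypasses both of your steps: it works on the $\mu$-constant stratum of a miniversal deformation $\Sigma\xrightarrow{\pi}\left(\mathbb{C}^{N},0\right)$, considers the \emph{coherent} sheaf $\textup{Der}^{\uparrow}\left(\log\Sigma^{\circ}\right)$ of vertical vector fields tangent to $\Sigma^{\circ}=\Sigma\cup H$ (with $H$ an auxiliary hyperplane), and from a generating system with $X_{1}\wedge X_{j}=xf_{\Sigma}g_{j}$ performs an explicit elimination --- pushing the common zero locus of the $g_{j}$ into $\left\{ 0\right\} \times\mathbb{C}^{N}$, so that $\left(x,y\right)^{M}\subset\left(g_{2},\ldots,g_{n}\right)$, then dividing repeatedly by $x$ --- to manufacture two vertical fields with $X_{1}\wedge X_{2}=f_{\Sigma}$. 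Saito's criterion then certifies a Saito basis on every fiber at once, and the only semicontinuity needed is that of $t\mapsto\nu\left(\left.X_{i}\right|_{\pi^{-1}\left(t\right)}\right)$ along this already-analytic family, which is elementary. Your one correct and shared insight is that the wedge identity $X_{1}\left(c\right)\wedge X_{2}\left(c\right)=u\left(c\right)f_{c}$ is the right certificate for the basis property in a family; what is missing is any viable way to produce two fields satisfying it.
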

\begin{proof}
Let $\left(C,\overline{h}\right)\in\moduli$ be a regular point for
the complex structure of $\moduli$. Consider a miniversal deformation
of $C$ 
\begin{equation}
\left(\Sigma,C\right)\subset\left(\mathbb{C}^{2+N},0\right)\xrightarrow{\pi}\left(\mathbb{C}^{N},0\right),\qquad\pi\left(x,t\right)=t\in\mathbb{C}^{N}\label{Deformation}
\end{equation}
versal for topologically trivial deformations of $C$ and for which
the singular locus of $\Sigma$ is $\left\{ 0\right\} \times\mathbb{C}^{N}:$
it is enough to consider the miniversal deformation of any reduced
equation of $C$ and to restrict it to the associated smooth $\mu-$constant
stratum. We fix an open neighborhood $\mathbb{C}^{2+N}\supset\mathcal{U}\ni0$
on which $\Sigma$ and $C$ are well defined. By shrinking $\mathcal{U}$
if necessary, we can also suppose that, out of its singular locus,
$\Sigma$ is transverse to the fiber of $\pi,$ that is for any $p\in\mathcal{U}\setminus\left\{ 0\right\} \times\mathbb{C}^{N},$
\begin{equation}
\pi^{-1}\left(\pi\left(p\right)\right)\not\subset T_{p}\Sigma\label{transversality}
\end{equation}

The deformation (\ref{Deformation}) is topologically trivial : more
precisely, there exists an homeomorphism $\mathcal{H}$:$\left(\mathbb{C}^{2+N},0\right)\to\left(\mathbb{C}^{2+N},0\right)$
such that
\begin{enumerate}
\item $\pi\mathcal{H=\pi}$
\item $\left.\mathcal{H}\right|_{\pi^{-1}\left(0\right)}=h$
\item The following diagram commutes\[\xymatrix{ \left(S\times \left(\mathbb{C}^{N},0\right),S\right)  \ar[rd]^{\pi} \ar[r]^-{\mathcal{H}}    &   \left(\Sigma,C\right)  \ar[d]^\pi \\    & \left(\mathbb{C}^{N},0\right) } \]
\end{enumerate}
By construction, the map $\mathfrak{C}$ defined by 
\begin{equation}
t\in\left(\mathbb{C}^{N},0\right)\xmapsto{\mathfrak{C}}\left(\left.\Sigma\right|_{\pi^{-1}\left(t\right)},\left.\overline{\mathcal{H}}\right|_{\pi^{-1}\left(t\right)}\right)\in\moduli\label{local.chart}
\end{equation}
is a local diffeomorphism.

For technical reason, we add to $\Sigma$ an hyperplane $H$ not contained
in $\Sigma$ and transverse to $\pi.$ Consider $\Sigma^{\circ}=\Sigma\cup H$$.$
In what follows, $f_{\Sigma^{\circ}}$ stands for a reduced equation
of $\Sigma$. The kernel of the evaluation map 
\[
\text{Der}\left(\log\Sigma^{\circ}\right)\xrightarrow{d\pi\left(\cdot\right)}\left(\mathcal{O}_{N+2}\right)^{N}
\]
is the sheaf $\text{Der}^{\uparrow}\left(\log\Sigma^{\circ}\right)$
of \emph{vertical} vector fields tangent to $\Sigma^{\circ}$. In
the initial coordinates $\left(x,y,t\right)$ a section of $\text{Der}^{\uparrow}\left(\log\Sigma^{\circ}\right)$
is written 
\[
a\left(x,y,t\right)\partial_{x}+b\left(x,y,t\right)\partial_{y}
\]
where $a$ and $b$ are analytic functions. The sheaf $\text{Der}\left(\log\Sigma^{\circ}\right)$
is coherent, so is $\text{Der}^{\uparrow}\left(\log\Sigma^{\circ}\right).$
Note that if $X$ is a section of $\text{Der}^{\uparrow}\left(\log\Sigma^{\circ}\right)$,
then for any $t\in\pi\left(\mathcal{U}\right)$, $\left.X\right|_{\pi^{-1}\left(t\right)}$
is tangent to $\left.\Sigma^{\circ}\right|_{\pi^{-1}\left(t\right)}.$
Fix a system of generators 
\begin{equation}
\left\{ X_{1},\cdots,X_{n}\right\} \label{minimal.system.generateur}
\end{equation}
of $\text{Der}^{\uparrow}\left(\log\Sigma^{\circ}\right)\left(\mathcal{U}\right).$
We are going to use the following remarks which are consequences of
the coherence property : for any open set $\mathcal{V}\subset\mathcal{U}$,
the vector fields $\left.X_{1}\right|_{\mathcal{V}},\cdots,\left.X_{n}\right|_{\mathcal{V}}$
generate $\text{Der}^{\uparrow}\left(\log\Sigma^{\circ}\right)\left(\mathcal{V}\right).$
Moreover,
\begin{enumerate}
\item if $\mathcal{V}$ does not meet $\Sigma^{\circ}$ then $\text{Der}^{\uparrow}\left(\log\Sigma^{\circ}\right)\left(\mathcal{V}\right)$
is the set of all holomorphic vertical vector fields on $\mathcal{V}.$
\item if $\mathcal{V}$ meets the smooth part of $\Sigma^{\circ}$, then
$\text{Der}^{\uparrow}\left(\log\Sigma^{\circ}\right)\left(\mathcal{V}\right)$
is freely generated by the vertical vector fields $x\partial_{x}$
and $\partial_{y}$ where $\left(x,y,t\right)$ is a local system
of coordinates preserving the fibration $\pi$ for which $x=0$ is
an equation of the trace of $\Sigma^{\circ}$ on $\mathcal{V}$: such
a local system of coordinates exists under the transversality property
(\ref{transversality}). In particular, the product 
\[
x\partial_{x}\wedge\partial_{y}
\]
vanishes at order $1$ along $\Sigma^{\circ}.$
\end{enumerate}
All the $X_{i}'s$ cannot vanish identically on a given component
of $\Sigma^{\circ}$ because for instance the section of $\text{Der}^{\uparrow}\left(\log\Sigma^{\circ}\right)$
defined by
\[
\partial_{x}\left(f_{\Sigma^{\circ}}\right)\partial_{y}-\partial_{y}\left(f_{\Sigma^{\circ}}\right)\partial_{x}
\]
does not vanish on any component of $\Sigma^{\circ}.$ Considering
if necessary a combination of the $X_{i}'s$, we can suppose that
$X_{1}$ does not vanish identically on any component of $\Sigma^{\circ}.$
We can also suppose that $X_{1}$ is singular in codimension $2$
: indeed, if not, there exists $\tilde{X}_{1}$ such that $X_{1}=h\tilde{X}_{1}$
where $h$ is an holomorphic map with $h\left(0\right)=0.$ Since
$h$ cannot vanish identically on any component of $\Sigma^{\circ}$,
$\tilde{X}_{1}$ is tangent to $\Sigma^{\circ}$ and the family 
\[
\left\{ \tilde{X}_{1},\cdots,X_{n}\right\} 
\]
still generates the sheaf $\text{Der}^{\uparrow}\left(\log\Sigma^{\circ}\right)$.
Now, if there exists $j\neq1$ such that 
\[
X_{1}\wedge X_{j}\equiv0
\]
then, by division, there exists $\phi$ such that $X_{j}=\phi X_{1}$,
which contradicts the minimality of the system of generators (\ref{minimal.system.generateur}).
Thus, for any $j\neq1$, there exists a function $g_{j}\not\equiv0$
such that
\begin{equation}
X_{1}\wedge X_{j}=f_{\Sigma^{\circ}}g_{j}=xf_{\Sigma}g_{j}\label{wedge.generateur}
\end{equation}
where the system of coordinates $\left(x,y,t\right)$ is chosen so
that $x$ is an equation of the added hyperplan $H.$ Consider a point
$p$ in the zero set $Z\left(g_{2},\ldots,g_{n}\right)$ of the ideal
$\left(g_{2},\ldots,g_{n}\right)$. If $p$ is not in $\Sigma^{\circ}$
then all the generators of $\text{Der}^{\uparrow}\left(\log\Sigma^{\circ}\right)$
are tangent two by two at $p$, which is impossible in view of the
above remark $\left(1\right)$. Therefore, one has 
\[
Z\left(g_{2},\ldots,g_{n}\right)\subset\Sigma^{\circ}.
\]
We are going to improve the above inclusion, showing that one can
suppose that
\[
Z\left(g_{2},\ldots,g_{n}\right)\subset\left\{ 0\right\} \times\mathbb{C}^{N}.
\]
Consider the following set 
\[
\Delta=\left\{ \left.t\in\left(\mathbb{C}^{N},0\right)\right|\pi^{-1}\left(t\right)\subset Z\left(g_{2},\ldots,g_{n}\right)\right\} .
\]
It is a closed analytic subset of $\left(\mathbb{C}^{N},0\right)$
and we remove $\pi^{-1}\left(\Delta\right)$ of $\mathcal{U}.$ Now,
fixed some $t$ and denote by denote by $I$ the canonical injection
$I:\left(\mathbb{C}^{2},0\right)\to\pi^{-1}\left(t\right),$ $I\left(x\right)=\left(x,t\right)$.
If the intersection
\[
\Delta_{t}=\left.\Sigma^{\circ}\right|_{\pi^{-1}\left(t\right)}\cap Z\left(g_{2},\cdots,g_{n}\right)
\]
contains $0\times\left\{ t\right\} $ has a non isolated point of
$\Delta_{t}$, it contains also an analytic curve which is a component
of $I^{\star}f_{\Sigma^{\circ}}=0$. Therefore there is a factor $h$
of $I^{\star}f_{\Sigma^{\circ}}$ that divides $I^{\star}g_{i}$ for
any $i.$ Thus, for $i\geq2$, one has
\[
\left.X_{1}\right|_{\pi^{-1}\left(t\right)}\wedge\left.X_{i}\right|_{\pi^{-1}\left(t\right)}=h^{2}\left(\cdots\right).
\]
Since the vector fields $\left.X_{i}\right|_{\pi^{-1}\left(t\right)}$
are tangent to $h=0$, any couple of element in $\left.\text{Der}^{\uparrow}\left(\log\Sigma^{\circ}\right)\right|_{\pi^{-1}\left(t\right)}$
has a contact of order $2$ locally around the zero locus of $h$,
which is impossible according to the above remark $\left(2\right)$.
As a consequence, for any $t\in\text{\ensuremath{\left(\mathbb{C}^{N},0\right)}},$
if $\Delta_{t}$ contains $0\times\left\{ t\right\} $, it is as an
isolated point in $\Delta_{t}$. So, $\Delta_{t}$ is a finite set.
\begin{lem}
Let $W\subset\left(\mathbb{C}^{2+N},0\right)$ be an analytic set
such that for any $t$, $W\cap\pi^{-1}\left(t\right)$ is finite.
Then 
\[
\left\{ 0\right\} \times\mathbb{C}^{N}\not\subset\overline{W\setminus\left\{ 0\right\} \times\mathbb{C}^{N}}.
\]
\end{lem}
\begin{proof}
The hypothesis ensures that $\textup{codim}W\geq2.$ If $\textup{codim}W\geq3,$
the lemma is clear since $\textup{codim}\left(\underbrace{\left\{ 0\right\} \times\mathbb{C}^{N}}_{V}\right)=2$.
Suppose $\textup{codim}W=2$. Let us write 
\[
V=\left(W\cap V\right)\cup\left(\overline{V\setminus W}\right).
\]
Since $V$ is irreducible, either $W\cap V=V$, and $V$ is an irreducible
component of $W$, or $\overline{V\setminus W}=V,$ and $V\cap W$
is an analytic subset of $V$ of codimension at least $1$ in $V$,
and thus of codimension at least $3$ in $\mathbb{C}^{2+N}.$ In any
case, the lemma is proved.
\end{proof}
Following the lemma, the analytic set $K$ defined by 
\[
K=\left\{ 0\right\} \times\mathbb{C}^{N}\cap\overline{\Sigma^{\circ}\cap Z\left(g_{2},\cdots,g_{n}\right)\setminus\left\{ 0\right\} \times\mathbb{C}^{N}}
\]
is a strict analytic subset of $\left\{ 0\right\} \times\mathbb{C}^{N}$
such that $\left\{ 0\right\} \times\mathbb{C}^{N}\setminus K$ admits
a neighborhood $\mathcal{U}$ in $\left(\mathbb{C}^{2+N},0\right)$
satisfying 
\[
Z\left(g_{2},\cdots,g_{n}\right)\cap\mathcal{U}\subset\left\{ 0\right\} \times\mathbb{C}^{N}
\]
At the level of the ideals, the inclusion above ensures that there
exists $M\in\mathbb{N}$ such that 
\[
\left(x,y\right)^{M}\subset\left(g_{2},\ldots,g_{n}\right)
\]
As a consequence, there exists a relation of the following form 
\[
x^{M}=\sum_{i=2}^{n}h_{i}g_{i}
\]
and considering $Y=\sum_{i=2}^{n}h_{i}X_{i}$ and the relation (\ref{wedge.generateur})
yields a vector field $Y$ in $\text{Der}^{\uparrow}\left(\log\Sigma^{\circ}\right)\left(\mathcal{U}\right)$
such that 
\[
X_{1}\wedge Y=f_{\Sigma}x^{M+1}.
\]
Notice that $X_{1}$ and $Y$ are both tangent to $x=0.$ Let us write
in coordinates 
\begin{align*}
X_{1} & =xa^{1}\left(x,y,t\right)\frac{\partial}{\partial x}+\left(b_{0}^{1}\left(y,t\right)+xb_{1}^{1}\left(x,y,t\right)\right)\frac{\partial}{\partial y}\\
Y & =xa^{2}\left(x,y,t\right)\frac{\partial}{\partial x}+\left(b_{0}^{2}\left(y,t\right)+xb_{1}^{2}\left(x,y,t\right)\right)\frac{\partial}{\partial y}.
\end{align*}
Replacing if necessary $X_{1}$ by $X_{1}+Y$, we can suppose that
\[
\nu_{y}\left(b_{0}^{1}\right)\leq\nu_{y}\left(b_{0}^{2}\right)
\]
where $\nu_{y}$ is the valuation in the ring $\mathbb{C}\left\{ t\right\} \left\{ y\right\} $.
Consider the vertical vector field 
\[
\tilde{Y}=\frac{1}{x}\left(Y-\frac{b_{0}^{2}}{b_{0}^{1}}X_{1}\right).
\]
It is holomorphic removing if necessary, some fibers $\pi^{-1}\left(t\right)$
for $t$ in some closed analytic set of $\mathbb{C}^{N}$ related
to the zeros of $b_{0}^{1}\left(0,t\right)$. Moreover, one has 
\[
X_{1}\wedge\tilde{Y}=f_{\Sigma}x^{M}.
\]
Since $X_{1}$ is tangent to $x=0$ and since its singular locus has
codimension $2$, $\tilde{Y}$ is also tangent to $x=0$. The process
can be repeated and finally, one obtains two vertical vector fields
$X_{1}$ and $X_{2}$ tangent to $\Sigma$ such that 
\[
X_{1}\wedge X_{2}=f_{\Sigma}.
\]
The functions
\[
t\in\pi\left(\mathcal{U}\right)\mapsto\nu\left(\left.X_{i}\right|_{\pi^{-1}\left(t\right)}\right),\ i=1,2
\]
are lower semi-continuous. Replacing if necesssary $X_{1}$ by $X_{1}+X_{2},$
we consider an open set $\mathcal{U^{\prime}}\subset\left(\mathcal{\mathbb{C}}^{N},0\right)$,
whose closure is a neighborhood of $0$, on which 
\[
\forall t\in\mathcal{\mathcal{U^{\prime}}},\ \nu\left(\left.X_{1}\right|_{\pi^{-1}\left(t\right)}\right)\leq\nu\left(\left.X_{2}\right|_{\pi^{-1}\left(t\right)}\right).
\]
According to the criterion of Saito, for any $t$, 
\[
\left\{ \left.X_{1}\right|_{\pi^{-1}\left(t\right)},\left.X_{2}\right|_{\pi^{-1}\left(t\right)}\right\} 
\]
consists in basis of Saito for the curve $\left.\Sigma\right|_{\pi^{-1}\left(t\right)}.$
Therefore, for any $t\in\mathcal{U}^{\prime},$ one has 
\[
\nu\left(\left.X_{1}\right|_{\pi^{-1}\left(t\right)}\right)=\mathfrak{s}\left(\left.\Sigma\right|_{\pi^{-1}\left(t\right)}\right).
\]
From (\ref{local.chart}), one can consider the open set $\mathfrak{C}\left(\mathcal{U}^{\prime}\right)\subset\moduli$
and the union of such open sets while the above construction is done
in the neighborhood of any regular point $\left(C,\overline{h}\right)$
in $\moduli.$ By construction, the resulting open set has the desired
properties.
\end{proof}
From now on, a curve $\text{C}$ in $\moduli$ will be said \emph{generic}
if it belongs to the open set identified in the theorem above : in
that sense, for a generic curve $C$ in its moduli space, we will
be allowed to consider a analytical\emph{ }family of Saito bases following
any topologically trivial deformation of $C.$
\begin{example}
Consider the union of four regular transversal curves. Up to some
change of coordinates, it can be written 
\[
S=\left\{ xy\left(y+x\right)\left(y+t_{1}x\right)=0\right\} 
\]
where $t_{1}\in\moduli=\mathbb{C}\setminus\left\{ 0,1\right\} .$
It can be seen \cite{MR2808211} that it admits a miniversal deformation
for the topologically trivial deformations of the form
\[
\Sigma=\left\{ F\left(x,y,t\right)=xy\left(y+x\right)\left(y+tx\right)=0\right\} \in\left(\mathbb{C}^{2}\times\mathbb{C},\left(0,0,t_{1}\right)\right).
\]
The basis highlighted in Theorem \ref{Flat.saito.basis} can be explicited
in the above coordinates as 
\[
X_{1}=x\partial x+y\partial y,\qquad X_{2}=\partial_{x}F\partial_{y}-\partial_{y}F\partial_{x}.
\]
In this case, $X_{1}$ and $X_{2}$ is a Saito basis in a whole neighborhood
of $t_{1}\in\moduli.$ In general, the situtation is not so favourable.
\end{example}
\begin{example}
\label{Example.5.droites}Consider for instance the union of five
regular transversal curves, which is written 
\[
S=\left\{ xy\left(y+x\right)\left(y+\alpha x\right)\left(y+\beta x\right)=0\right\} .
\]
with $\alpha\neq0,1$ and $\beta\neq0,1,\alpha$. A miniversal deformation
of $S$ is written 
\[
\Sigma=\left\{ F=xy\left(y+x\right)\left(y+t_{1}x\right)\left(y+t_{2}x+t_{3}x^{2}\right)=0\right\} \in\left(\mathbb{C}^{2}\times\mathbb{C}^{3},\left(0,0,\alpha,\beta,0\right)\right).
\]
For the curve $S$, which corresponds to the parameter $\left(\alpha,\beta,0\right)$,
a basis of Saito is given by
\[
X_{1}=x\partial x+y\partial y,\qquad X_{2}=\partial_{x}F\partial_{y}-\partial_{y}F\partial_{x}.
\]
However, this basis cannot be \emph{extended,} in a whole neighborhood
of $\left(\alpha,\beta,0\right).$ Since $X_{1}$ has a valuation
equal to one, the Saito number of $S$ is equal to $1.$ It can be
seen that for any $t_{3}\neq0,$ the number of Saito of $\left.\Sigma\right|_{t=\left(\alpha,\beta,t_{3}\right)}$
is bigger than $2$. Indeed, consider a vector field $X$ tangent
to $\left.\Sigma\right|_{t=\left(\alpha,\beta,t_{3}\right)}.$ If
its valuation is smaller than $1$, then it is dicritical. Thus it
is written 
\[
X=k\left(x\partial_{x}+y\partial y\right)+\left(\cdots\right)
\]
where $k$ is a non vanishing constant. Following \cite{diff}, $X$
is linearizable and in some coordinates in which $X$ is linear, the
curve $\left.\Sigma\right|_{t=\left(\alpha,\beta,t_{3}\right)}$ becomes
exactly the union of five germs of straight lines, which is impossible
if $t_{3}\neq0.$ Finally, it can be seen that if $t_{3}\neq0$ then
\[
\mathfrak{s}\left(\left.\Sigma\right|_{t=\left(\alpha,\beta,t_{3}\right)}\right)=2
\]
and an optimal vector field for $\left.\Sigma\right|_{t=\left(\alpha,\beta,v\right)}$
is written
\[
X=\left(x+\epsilon y\right)\left(x\partial_{x}+y\partial y\right)+\left(\cdots\right)
\]
where $\epsilon\neq0,1.$
\end{example}

\subsection{Saito basis for $S$ and $S\cup l$}

The process described below allows us to obtain a Saito basis for
$S$ from a Saito basis for $S\cup l$ where the curve $l$ is a regular
curve. This trick has been already introduced in the proof of Theorem
\ref{Flat.saito.basis}. Throughout this article, it will be often
a key argument.

Let $S$ be a germ of curve and $l$ be a germ of smooth curve that
is not a component of $S$. Let $\left\{ X_{1},X_{2}\right\} $ be
a Saito basis for $S\cup l$. The Saito criterion is written 
\begin{equation}
X_{1}\wedge X_{2}=ufL\label{eq:critertion}
\end{equation}
where $u$ is a unity, $f$ a reduced equation of $S$ and $L$ a
reduced equation of $l.$ Let us consider a local system of coordinates
$\left(x,y\right)$ in which $L=x.$ Then, for $i=1,2,$ the vector
fields $X_{i}$ can be written 
\[
X_{i}=xa_{i}\partial_{x}+\left(b_{i}^{0}+xb_{i}^{1}\right)\partial_{y},\quad a_{i},\ b_{i}^{1}\in\mathbb{C}\left\{ x,y\right\} ,\ b_{i}^{0}\in\mathbb{C}\left\{ y\right\} .
\]
Considering if necessary a generic change of basis 
\[
\left\{ \alpha X_{1}+\beta X_{2},uX_{1}+vX_{2}\right\} 
\]
where $\left|\begin{array}{cc}
\alpha & \beta\\
u & v
\end{array}\right|\neq0$, one can suppose that 
\[
\nu\left(X_{i}\right)=\mathfrak{s}\left(S\cup l\right)\textup{ and }\nu_{y}\left(b_{1}^{0}\left(y\right)\right)=\nu_{y}\left(b_{2}^{0}\left(y\right)\right)
\]
where $\nu_{y}$ is the valuation in the ring $\mathbb{C}\left\{ y\right\} $.
In particular, the quotient $\frac{b_{1}^{0}}{b_{2}^{0}}$ extends
holomorphically at $\left(x,y\right)=\left(0,0\right)$ as a unit.
The relation \ref{eq:critertion} leads to 
\begin{equation}
\underbrace{\frac{\left(X_{1}-\frac{b_{1}^{0}}{b_{2}^{0}}X_{2}\right)}{x}}_{X_{1}^{\prime}}\wedge X_{2}=uf,\label{eq:criterion2}
\end{equation}
where $X_{1}^{\prime}$ extends holomorphically at $\left(0,0\right).$
Since $L=0$ is not a component of $S$, the vector field $X_{1}^{\prime}$
leaves invariant $S.$ The Saito criterion ensures that $\left\{ X_{1}^{\prime},X_{2}\right\} $
is a Saito basis for $S$.

Now, it is clear that
\[
\nu\left(X_{1}^{\prime}\right)\geq\nu\left(X_{1}\right)-1=\mathfrak{s}\left(S\cup l\right)-1.
\]
Since, $\nu\left(X_{2}\right)=\mathfrak{s}\left(S\cup l\right)$,
one has 
\[
\mathfrak{s}\left(S\right)=\mathfrak{s}\left(S\cup l\right)-1\textup{ or }\mathfrak{s}\left(S\cup l\right).
\]

Assume moreover, that $S$ is not of radial type but $S\cup l$ is.
By definition, $X_{1}$ and $X_{2}$ are dicritical. Thus, the homogeneous
part of degree $\mathfrak{s}\left(S\cup l\right)$ of $X_{i}$ is
written 
\[
X_{i}^{\left(\mathfrak{s}\left(S\cup l\right)\right)}=R_{i}\left(x\partial_{x}+y\partial_{y}\right)
\]

Therefore the homogeneous part of degree $\mathfrak{s}\left(S\cup l\right)-1$
of $X_{1}^{\prime}$ is
\begin{equation}
\frac{1}{x}\left(R_{1}-\frac{b_{1}^{0}}{b_{2}^{0}}\left(0\right)R_{2}\right)\left(x\partial_{x}+y\partial_{y}\right).\label{eq:radialtype}
\end{equation}
If the above expression does not identically vanish, then $X_{1}^{\prime}$
would be dicritical. Since $X_{2}$ is dicritical too, $S$ would
be of radial type, which is impossible. Thus, the expression \ref{eq:radialtype}
vanishes and $\nu\left(X_{1}^{\prime}\right)\geq\mathfrak{s}\left(S\cup l\right).$
Since $\nu\left(X_{2}\right)=\mathfrak{s}\left(S\cup l\right)$ one
has finally
\[
\mathfrak{s}\left(S\right)=\mathfrak{s}\left(S\cup l\right).
\]

Gathering the remarks above, we obtain the
\begin{prop}
\label{division.saito.process}Let $l$ be a germ of smooth curve
that is not a component of $S.$ Then
\begin{enumerate}
\item In any case, $\mathfrak{s}\left(S\right)=\mathfrak{s}\left(S\cup l\right)-1\textup{ or }\mathfrak{s}\left(S\cup l\right)$.
\item If $S$ is not of radial type but $S\cup l$ is then 
\[
\mathfrak{s}\left(S\right)=\mathfrak{s}\left(S\cup l\right).
\]
\end{enumerate}
\end{prop}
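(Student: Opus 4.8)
The plan is to reduce the whole statement to the division construction described just above, which manufactures a Saito basis for $S$ out of one for $S\cup l$. First I would start from a Saito basis $\left\{X_1,X_2\right\}$ for $S\cup l$, so that by the criterion of Saito $X_1\wedge X_2=ufL$ with $u$ a unit, $f$ a reduced equation of $S$ and $L$ a reduced equation of $l$. Working in coordinates $(x,y)$ with $L=x$, I would write each $X_i=xa_i\partial_x+(b_i^0+xb_i^1)\partial_y$ with $b_i^0\in\mathbb{C}\{y\}$, and then perform a generic linear change of the basis — which preserves being a Saito basis, since the wedge is only rescaled by a nonzero determinant — so as to arrange simultaneously that $\nu(X_i)=\mathfrak{s}(S\cup l)$ for $i=1,2$ and that $\nu_y(b_1^0)=\nu_y(b_2^0)$. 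The purpose of this normalization is that the quotient $b_1^0/b_2^0$ then extends as a holomorphic unit near the origin.

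The heart of part $(1)$ is the vector field $X_1'=\frac{1}{x}\left(X_1-\frac{b_1^0}{b_2^0}X_2\right)$. I would check that subtracting the matched $\partial_y$-parts annihilates the lowest-order contribution divisible only by the $b_i^0$, so the bracketed field is divisible by $x=L$ and $X_1'$ is holomorphic at the origin. Since $l$ is not a component of $S$, feeding $X_1'$ into the wedge relation produces $X_1'\wedge X_2=uf$, which by the criterion of Saito means exactly that $\left\{X_1',X_2\right\}$ is a Saito basis for $S$; in particular both fields are tangent to $S$. The valuation bookkeeping then closes part $(1)$: dividing by $x$ lowers the valuation by at most one, so $\nu(X_1')\geq\nu(X_1)-1=\mathfrak{s}(S\cup l)-1$, while $\nu(X_2)=\mathfrak{s}(S\cup l)$, whence $\mathfrak{s}(S)=\min(\nu(X_1'),\nu(X_2))$ equals $\mathfrak{s}(S\cup l)-1$ or $\mathfrak{s}(S\cup l)$.

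For part $(2)$, I would track the leading homogeneous parts through the same division. Under the extra hypothesis that $S\cup l$ is of radial type every optimal field is dicritical, so the degree-$\mathfrak{s}(S\cup l)$ parts take the form $X_i^{(\mathfrak{s}(S\cup l))}=R_i(x\partial_x+y\partial_y)$ for homogeneous $R_i$. Then the degree-$(\mathfrak{s}(S\cup l)-1)$ part of $X_1'$ equals $\frac{1}{x}\left(R_1-\frac{b_1^0}{b_2^0}(0)R_2\right)(x\partial_x+y\partial_y)$, again a radial multiple. The decisive step is a dichotomy: if this leading part did not vanish identically, then $X_1'$ would itself be dicritical of valuation $\mathfrak{s}(S\cup l)-1$, and since $X_2$ is also dicritical this would force every optimal combination to be dicritical, i.e. $S$ of radial type, contradicting the hypothesis. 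Hence the leading part vanishes, so $\nu(X_1')\geq\mathfrak{s}(S\cup l)$, and with $\nu(X_2)=\mathfrak{s}(S\cup l)$ this yields $\mathfrak{s}(S)=\mathfrak{s}(S\cup l)$.

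I expect the only delicate points to be the two normalizations by a single generic change of basis — verifying that one generic combination simultaneously realizes the minimal valuation on both fields and equalizes the $y$-valuations of their $\partial_y$-components — and, in part $(2)$, the claim that nonvanishing of the radial leading term of $X_1'$ genuinely propagates to all optimal combinations $aX_1'+bX_2$ and hence to radial type for $S$. The remaining algebra (holomorphic extension after dividing by $x$, and tangency to $S$ via the Saito criterion) is routine once these genericity choices are fixed.
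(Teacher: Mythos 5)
Your proposal is correct and follows essentially the same route as the paper: the same division construction $X_{1}^{\prime}=\frac{1}{x}\left(X_{1}-\frac{b_{1}^{0}}{b_{2}^{0}}X_{2}\right)$ after the same generic normalization of the basis, the same valuation bookkeeping for part $(1)$, and the same dichotomy on the radial leading term $\frac{1}{x}\left(R_{1}-\frac{b_{1}^{0}}{b_{2}^{0}}\left(0\right)R_{2}\right)\left(x\partial_{x}+y\partial_{y}\right)$ for part $(2)$. The delicate points you flag (simultaneous genericity of the change of basis, and propagation of dicriticality to all optimal combinations) are exactly the ones the paper handles implicitly, and your treatment of them is sound.
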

The process described above can be reversed. Consider a Saito basis
$\left\{ X_{1},X_{2}\right\} $ for $S.$ Changing of basis if necessary,
one can consider that 
\[
\nu\left(X_{1}\right)=\nu\left(X_{2}\right).
\]
Let $l$ be a \emph{generic} smooth curve and $L$ a reduced equation
of $l$. Fix some coordinates $\left(x,y\right)$ in which $l$ has
a parametrization of the form 
\[
\gamma\left(t\right)=\left(t,\epsilon\left(t\right)\right),\ t\in\left(\mathbb{C},0\right).
\]
The product 
\[
X_{1}\left(\gamma\right)\wedge\gamma^{\prime}\in\mathbb{C}\left\{ t\right\} 
\]
has a valuation in $\mathbb{C}\left\{ t\right\} $ equal to $\nu\left(X_{1}\right)$
or $\nu\left(X_{1}\right)+1$ depending on whether or not $X_{1}$
is dicritical. Therefore, the quotient 
\[
\frac{X_{1}\left(\gamma\right)\wedge\gamma^{\prime}}{X_{2}\left(\gamma\right)\wedge\gamma^{\prime}}
\]
extends holomorphically at $t=0$ as a unit $\phi\left(t\right).$
By construction, the vector field 
\[
X_{1}-\phi\left(x\right)X_{2}
\]
is tangent to the curve $l$. Finally, in the coordinates $\left(x,y\right)$,
according to the criterion of Saito, the family 
\begin{equation}
\left\{ X_{1}-\phi\left(x\right)X_{2},LX_{2}\right\} \label{ajouter.une.courbe}
\end{equation}
is a Saito basis for $S\cup l$.

\section{Generic element in $\textup{Der}\left(\log S\right)$ and adapted
Saito Bases.}

\global\long\def\dd{\textup{d}}%

In (\ref{saito.moitie.de.la.courbe}), we remark that 
\[
\mathfrak{s}\left(S\right)\leq\frac{\nu\left(S\right)}{2}.
\]
In this section, we will prove that for a curve $S$ generic in its
moduli space the latter inequality is essentially reached, as it will
be stated in Theorem \ref{thm:LowerGenericBound}.

\subsection{Generic value of $\mathfrak{s}\left(S\right)$\label{subsection.Generic.value}}

Let $S$ be a curve generic in its topological class and $\left\{ X_{1},X_{2}\right\} $
be a Saito basis for $S$.

The exceptional divisor $D$ of the blowing-up $E:\left(\mathcal{M},D\right)\to\left(\mathbb{C}^{2},0\right)$
can be covered by two open sets $U_{1}$ and $U_{2}$ and two charts
$\left(x_{1},y_{1}\right)$ and $\left(x_{2},y_{2}\right)$ defined
respectively in some neighborhoods of $U_{1}$ and $U_{2}$ such that
\[
E\left(x_{1},y_{1}\right)=\left(x_{1},y_{1}x_{1}\right)\qquad\textup{ and }\qquad E\left(x_{2},y_{2}\right)=\left(x_{2}y_{2},y_{2}\right).
\]
Let $\Theta_{S}$ be the sheaf on $\mathcal{M}$ of vector fields
tangent to $E^{\left(-1\right)}\left(S\right)=S^{E}\cup D$. Let $\omega$
be a $1-$form with an isolated singularity tangent to the vector
field $X_{1}$ : if $X_{1}$ is written 
\[
X_{1}=a\partial_{x}+b\partial_{y},
\]
one can choose 
\[
\omega=a\dd y-b\dd x.
\]
Let us consider the global $1-$form on $\mathcal{M}$ defined by
the pull-back 
\[
\Omega=E^{\star}\omega.
\]
We denote by $\mathfrak{B}$ the \emph{basic operator} : this is a
morphism of sheaves 
\[
\mathfrak{B}:\Theta_{S}\rightarrow\Omega^{2}\left(\mathcal{M}\right)
\]
that is written 
\[
\mathfrak{B}\left(T\right)=L_{T}\Omega\wedge\Omega=\Omega\left(T\right)d\Omega-d\left(\Omega\left(T\right)\right)\wedge\Omega.
\]
Here, $\Omega^{2}\left(\mathcal{M}\right)$ is the sheaf on $\mathcal{M}$
of holomorphic $2$-forms and $L_{T}$ is the Lie deriviative with
respect to the vector field $T.$ Following \cite{MR704017}, the
kernel of $\mathfrak{B}$ consists in the infinitesimal generators
of the sheaf of automorphisms of the foliation induced by $X_{1}^{E},$
that is, 
\[
L_{T}\Omega\wedge\Omega\equiv0\Longrightarrow\forall t\in\left(\mathbb{C},0\right),\ \left(\left(e^{tT}\right)^{\star}X_{1}^{E}\right)\wedge X_{1}^{E}\equiv0.
\]

The lemma below describes partially the image of $\mathfrak{B}.$
\begin{lem}
\label{lemma.inclusion.sheaves}$\mathfrak{B}\left(\Theta_{S}\right)\subset\Omega^{2}\left(-\overline{n}D-S^{E}\right)$
where
\begin{itemize}
\item $\overline{n}=2\nu\left(X_{1}\right)+\left\{ \begin{array}{cl}
2 & \textup{ if \ensuremath{X_{1}} is dicritical}\\
1 & \textup{ if not}
\end{array}\right.$
\item $\Omega^{2}\left(-\overline{n}D-S^{E}\right)$ is the sheaf of $2-$forms
that vanish along $D$ and $S^{E}$ with at least respective orders
$\overline{n}$ and $1.$
\end{itemize}
\end{lem}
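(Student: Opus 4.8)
The plan is to fix the first chart of the blow-up, in which $E(x_1,y_1)=(x_1,y_1x_1)$ and $D=\{x_1=0\}$, and to estimate from below the orders of vanishing of the holomorphic $2$-form $\mathfrak{B}(T)=L_{T}\Omega\wedge\Omega$ along $D$ and along $S^E$ separately, for an arbitrary local section $T$ of $\Theta_S$. Since $D$ and $S^{E}$ share no component, divisibility of the coefficient of $\mathfrak{B}(T)$ by $x_1^{\overline{n}}$ and by a reduced equation of $S^{E}$ will together yield $\mathfrak{B}(T)\in\Omega^{2}\!\left(-\overline{n}D-S^{E}\right)$. The second chart is treated symmetrically.

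First I would determine the vanishing order $m$ of $\Omega=E^{\star}\omega$ along $D$. Writing $X_{1}=a\partial_{x}+b\partial_{y}$, so that $\omega=a\,dy-b\,dx$ has valuation $\nu(\omega)=\nu(X_1)=:\nu$, the substitution $x=x_1,\ y=x_1y_1$ gives
\[
\Omega=\bigl[a(x_1,x_1y_1)y_1-b(x_1,x_1y_1)\bigr]dx_1+a(x_1,x_1y_1)x_1\,dy_1,
\]
whose $dx_1$-coefficient has leading term $x_1^{\nu}\bigl[a_{\nu}(1,y_1)y_1-b_{\nu}(1,y_1)\bigr]$, the dehomogenization of the degree-$(\nu+1)$ part $a_{\nu}y-b_{\nu}x$ of $\omega(R)$, where $R=x\partial_x+y\partial_y$ and the subscript $\nu$ denotes the homogeneous part of degree $\nu$. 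When $X_1$ is not dicritical this term is nonzero, so $\Omega=x_1^{\nu}\Omega'$ with $\Omega'$ not divisible by $x_1$ and $D$ invariant for $\Omega'$; when $X_1$ is dicritical one has $a_{\nu}y-b_{\nu}x\equiv0$, whence $\Omega=x_1^{\nu+1}\Omega'$ with $D$ non-invariant for $\Omega'$. Thus $m=\nu$ in the first case and $m=\nu+1$ in the second, and one checks $\overline{n}=2m$ (dicritical) and $\overline{n}=2m+1$ (non-dicritical).

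Next I would exploit the tangency of $T$ to $D$. Since $T\in\Theta_S$, one has $Tx_1=x_1\tau$ for some holomorphic $\tau$, hence $L_T(x_1^{m})=m\,x_1^{m}\tau$ and
\[
L_T\Omega=x_1^{m}\bigl(m\tau\,\Omega'+L_T\Omega'\bigr).
\]
Wedging with $\Omega=x_1^{m}\Omega'$ and using $\Omega'\wedge\Omega'=0$ gives $\mathfrak{B}(T)=x_1^{2m}\bigl(L_T\Omega'\wedge\Omega'\bigr)$, which already establishes order $\geq 2m=\overline{n}$ along $D$ in the dicritical case. In the non-dicritical case $D$ is invariant, so $\Omega'=\alpha\,dx_1+x_1\tilde{\beta}\,dy_1$; computing $L_T\Omega'$ modulo $x_1$ and using $Tx_1\in(x_1)$ one finds $L_T\Omega'\equiv(T\alpha+\alpha\tau)\,dx_1 \pmod{x_1}$, so that $L_T\Omega'\wedge\Omega'\equiv0\pmod{x_1}$ because $dx_1\wedge dx_1=0$. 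This supplies the missing order and yields $\mathfrak{B}(T)\in(x_1^{2m+1})=(x_1^{\overline{n}})$.

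Finally, for the factor $S^{E}$: since $X_1$ belongs to a Saito basis of $S$, it is tangent to $S$, so the foliation defined by $\omega$ leaves $S$ invariant and therefore $\Omega$ leaves the strict transform $S^{E}$ invariant. At a smooth point of $S^{E}$, coordinates $(u,v)$ with $S^{E}=\{u=0\}$ give $\Omega=P\,du+u\tilde{Q}\,dv$, and the tangency $Tu=u\sigma$ together with the same modulo-$u$ computation forces $L_T\Omega\wedge\Omega\equiv0\pmod{u}$, i.e. vanishing of order $\geq1$ along $S^{E}$; by analyticity this divisibility propagates to the whole reduced divisor $S^{E}$. As $\mathfrak{B}(T)$ is a holomorphic top-degree form on the surface $\mathcal{M}$, these are genuine divisibility statements, and the conclusion follows. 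The main obstacle is the gain of the extra order in the non-dicritical case: it is precisely the invariance of $D$ under $\Omega'$, combined with the tangency of $T$, that produces the cancellation $L_T\Omega'\wedge\Omega'\equiv0\pmod{x_1}$, and this is the heart of the argument.
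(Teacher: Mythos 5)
Your proof is correct: the chart-wide factorization $\Omega=x_{1}^{m}\Omega'$, the Leibniz identity $\mathfrak{B}(T)=L_{T}\Omega\wedge\Omega=x_{1}^{2m}\left(L_{T}\Omega'\wedge\Omega'\right)$ for $T$ tangent to $D$, the extra modulo-$x_{1}$ cancellation when $D$ is invariant for $\Omega'$, and the modulo-$u$ computation at smooth points of $S^{E}$ give exactly the stated orders, and coprimality of $x_{1}$ with a reduced equation of $S^{E}$ assembles them into the sheaf inclusion. The paper obtains the same orders by a different computational device: it works at \emph{generic} points of each component of the divisor, in coordinates rectifying the foliation of $X_{1}^{E}$ --- out of $\textup{Sing}(X_{1}^{E})$ it writes $\Omega=ux_{1}^{\nu(X_{1})+1}\textup{d}y_{1}$ (dicritical case), out of the tangency locus $\Omega=ux_{1}^{\nu(X_{1})}\textup{d}x_{1}$ (non-dicritical case), and at regular points of $S^{E}$, $\Omega=u\,\textup{d}y_{1}$ --- and then computes $\mathfrak{B}(T)$ explicitly in each normal form, leaving tacit the standard fact that vanishing orders along an irreducible divisor are detected at generic points. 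Your route replaces the rectifying coordinates (which exist only at regular, non-tangency points of $X_{1}^{E}$) by the factorization of $\Omega$ and the initial-part criterion $a_{\nu}y-b_{\nu}x\equiv0$ for dicriticalness, so your estimate along $D$ holds at every point of the chart simultaneously, including singular and tangency points; only along $S^{E}$ do you need the generic-point-plus-propagation argument, which you state explicitly rather than leave implicit. What the paper's normal forms buy is brevity and sharper information --- the explicit formulas show that $\mathfrak{B}(\Theta_{S})$ and $\Omega^{2}(-\overline{n}D-S^{E})$ actually agree at non-tangency points, the fact recorded in the remark following the lemma --- while your factorization buys uniformity along $D$ and a proof free of any genericity restriction there.
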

\begin{proof}
It is a computation which can be performed in local coordinates. If
$X_{1}$ is dicritical, then out of $\textup{Sing}\left(X_{1}^{E}\right)$,
one can write 
\[
\Omega=ux_{1}^{\nu\left(X_{1}\right)+1}\dd y_{1},
\]
where $u$ is a local unit. A section $T$ of $\Theta_{S}$ is written
\[
T=\alpha x_{1}\partial_{x_{1}}+\beta\partial_{y_{1}},\quad\alpha,\beta\in\mathbb{C}\left\{ x_{1},y_{1}\right\} 
\]
Thus, applying the morphism $\mathfrak{B}$ yields 
\[
\mathfrak{B}\left(T\right)=\left(u^{2}x_{1}^{2\nu\left(X_{1}\right)+2}\partial_{x_{1}}\beta\right)\dd x_{1}\wedge\dd y_{1}.
\]
If $X_{1}$ is not dicritical, then out of the locus of tangency between
$X_{1}^{E}$ and $D$, one can write in some coordinate 
\[
\Omega=ux_{1}^{\nu\left(X_{1}\right)}\dd x_{1},
\]
and 
\[
\mathfrak{B}\left(T\right)=\left(u^{2}x_{1}^{2\nu\left(X_{1}\right)+1}\partial_{y_{1}}\alpha\right)\dd x_{1}\wedge\dd y_{1}.
\]
Finally, along a regular point of $S^{E},$ one can write 
\[
\Omega=u\dd y_{1},
\]
where $y_{1}=0$ is a local equation of $S^{E}.$ A local section
of $T$ of $\Theta_{S}$ is written 
\[
T=\alpha\partial_{x_{1}}+\beta y_{1}\partial_{y_{1}},\quad\alpha,\beta\in\mathbb{C}\left\{ x_{1},y_{1}\right\} 
\]
and 
\[
\mathfrak{B}\left(T\right)=\left(u^{2}y_{1}\partial_{x_{1}}\beta\right)\dd x_{1}\wedge\dd y_{1}.
\]
\end{proof}
Notice that if $c$ is not a tangency point between $X_{1}^{E}$ and
$D$, then at the level of the stack, one has 
\[
\left(\mathfrak{B}\left(\Theta_{S}\right)\right)_{c}=\left(\Omega^{2}\left(-\overline{n}D-S^{E}\right)\right)_{c},
\]
thus the two sheaves $\mathfrak{B}\left(\Theta_{S}\right)$ and $\Omega^{2}\left(-\overline{n}D-S^{E_{}}\right)$
are essentially equal.

\global\long\def\Hom{\textup{Hom}}%

The proof of the next lemma is a corollary of an adaptation of the
theory of infinitesimal deformations of foliations developped in \cite{Gomez}
by Gòmez-Mont. 
\begin{lem}
\label{lem:vanishing-map} The map in cohomology induced by the inclusion
of Lemma \ref{lemma.inclusion.sheaves}
\[
H^{1}\left(\mathcal{M},\Theta_{S}\right)\xrightarrow{\overline{\mathfrak{B}}_{\mathcal{}}}H^{1}\left(\mathcal{M},\Omega^{2}\left(-\overline{n}D-S^{E}\right)\right)
\]
is the zero map.
\end{lem}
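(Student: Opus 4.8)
The plan is to read $\overline{\mathfrak{B}}$ through the deformation complex of the foliation $\mathcal{F}$ defined by $\Omega=E^{\star}\omega$, following the machinery of \cite{Gomez}, and to reduce the statement to a surjectivity in cohomology that the flat Saito basis already provides. First I would record that the sheaf map $\mathfrak{B}:\Theta_{S}\to\Omega^{2}\left(-\overline{n}D-S^{E}\right)$ furnished by Lemma \ref{lemma.inclusion.sheaves} factors through its image as $\Theta_{S}\twoheadrightarrow\mathfrak{B}\left(\Theta_{S}\right)\hookrightarrow\Omega^{2}\left(-\overline{n}D-S^{E}\right)$. By functoriality, $\overline{\mathfrak{B}}$ is the composite of the induced maps on $H^{1}$, so it suffices to prove that the first one, $H^{1}\left(\mathcal{M},\Theta_{S}\right)\to H^{1}\left(\mathcal{M},\mathfrak{B}\left(\Theta_{S}\right)\right)$, vanishes. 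Writing $\ker\mathfrak{B}$ for the kernel sheaf, which by the property of $\mathfrak{B}$ quoted just before Lemma \ref{lemma.inclusion.sheaves} is exactly the subsheaf of sections of $\Theta_{S}$ whose flow preserves $\mathcal{F}$, the short exact sequence
\[
0\longrightarrow\ker\mathfrak{B}\longrightarrow\Theta_{S}\xrightarrow{\ \mathfrak{B}\ }\mathfrak{B}\left(\Theta_{S}\right)\longrightarrow0
\]
gives a long exact sequence in which the vanishing of $H^{1}\left(\Theta_{S}\right)\to H^{1}\left(\mathfrak{B}\left(\Theta_{S}\right)\right)$ is equivalent to the surjectivity of
\[
H^{1}\left(\mathcal{M},\ker\mathfrak{B}\right)\longrightarrow H^{1}\left(\mathcal{M},\Theta_{S}\right).
\]

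Next I would interpret both groups geometrically. Since $\Theta_{S}$ is the logarithmic tangent sheaf of $E^{-1}\left(S\right)=S^{E}\cup D$, the group $H^{1}\left(\mathcal{M},\Theta_{S}\right)$ is the space of first-order equisingular deformations of the total transform; the miniversal equisingular deformation $\Sigma\to\left(\mathbb{C}^{N},0\right)$ used in the proof of Theorem \ref{Flat.saito.basis} has Kodaira--Spencer map surjecting onto it. The subsheaf $\ker\mathfrak{B}$ imposes the additional condition of preserving $\mathcal{F}$, so $H^{1}\left(\mathcal{M},\ker\mathfrak{B}\right)$ classifies first-order deformations of the \emph{pair} (total transform, tangent foliation). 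In this language, the required surjectivity says precisely that every equisingular first-order deformation of $E^{-1}\left(S\right)$ can be accompanied by a compatible first-order deformation of $\mathcal{F}$ --- that the foliation can be made to follow the curve.

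This is exactly what the flat Saito basis delivers. By Theorem \ref{Flat.saito.basis}, for $S$ generic the Saito basis constructed in the proof is a family of vertical vector fields $\left\{ X_{1},X_{2}\right\} $ on the total space of $\Sigma\to\left(\mathbb{C}^{N},0\right)$ satisfying $X_{1}\wedge X_{2}=f_{\Sigma}$ fiberwise. Pulling these back by the fiberwise blow-up $E$ produces an analytic family of foliations $\mathcal{F}_{c}$ on $\mathcal{M}$, each leaving $E^{-1}\left(\Sigma_{c}\right)$ invariant and specializing to $\mathcal{F}$. By construction its Kodaira--Spencer cocycle is a \v{C}ech $1$-cocycle whose gluing generators preserve the foliations, hence takes values in $\ker\mathfrak{B}$; it therefore lifts the Kodaira--Spencer class of the family of curves. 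As the latter surjects onto $H^{1}\left(\mathcal{M},\Theta_{S}\right)$ by miniversality, the forgetful map $H^{1}\left(\mathcal{M},\ker\mathfrak{B}\right)\to H^{1}\left(\mathcal{M},\Theta_{S}\right)$ is surjective, which gives $\overline{\mathfrak{B}}=0$.

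The main obstacle is the first step, namely making rigorous the identification of $\mathfrak{B}$ with the differential of the foliation deformation complex of \cite{Gomez} in this \emph{twisted, logarithmic} setting, and verifying that the Kodaira--Spencer class of the blown-up family $\mathcal{F}_{c}$ genuinely lands in $H^{1}\left(\ker\mathfrak{B}\right)$ rather than merely in $H^{1}\left(\Theta_{S}\right)$ --- that is, that the compatibility between the deforming curve and the deforming foliation is exactly the cocycle condition cutting out $\ker\mathfrak{B}$. This requires adapting the computations of \cite{Gomez} to the presence of the non-reduced multiplicity $\overline{n}D$ and of the invariant strict transform $S^{E}$, and is where the dichotomy dicritical versus non-dicritical, recorded through the two values of $\overline{n}$ in Lemma \ref{lemma.inclusion.sheaves}, must be carried through the argument.
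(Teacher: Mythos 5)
Your reduction through the short exact sequence $0\to\ker\mathfrak{B}\to\Theta_{S}\to\mathfrak{B}\left(\Theta_{S}\right)\to0$ is formally correct, but it replaces the lemma by a strictly stronger statement: the vanishing of $H^{1}\left(\mathcal{M},\Theta_{S}\right)\to H^{1}\left(\mathcal{M},\mathfrak{B}\left(\Theta_{S}\right)\right)$, equivalently the surjectivity of $H^{1}\left(\mathcal{M},\ker\mathfrak{B}\right)\to H^{1}\left(\mathcal{M},\Theta_{S}\right)$. The lemma only asserts that the composite into the larger sheaf $\Omega^{2}\left(-\overline{n}D-S^{E}\right)$ vanishes, and the paper's proof makes essential use of that extra room: it produces the bounding $0$-cochain as $2$-forms $\Omega_{i}$ obtained by gluing $\mathfrak{B}\left(\tau_{ik}\right)$ only on $U_{i}\setminus\textup{Sing}\left(X_{1}^{E}\right)$ and then extending across the singular points (codimension $2$). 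That Hartogs extension is available for sections of $\Omega^{2}\left(-\overline{n}D-S^{E}\right)$, but there is no reason the extended forms remain sections of the image sheaf $\mathfrak{B}\left(\Theta_{S}\right)$, nor that the local vector fields $\tau_{ik}$ themselves glue; so the stronger statement you aim at is not what the paper's method yields, and you would need it to hold to run your argument.

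The genuine gap is the claim that the Kodaira--Spencer cocycle of the family of foliations coming from the flat Saito basis ``takes values in $\ker\mathfrak{B}$.'' To produce a \v{C}ech cocycle with values in the sheaf of infinitesimal automorphisms of the pair (curve, foliation) you must locally trivialize the family of \emph{pairs}, and the family of foliations $\left(X_{1}\left(c\right)\right)^{E}$ is in general not locally analytically trivial at the singular points of $X_{1}^{E}$: germs of singular foliations carry continuous local moduli (for instance the eigenvalue ratio of the linear part, i.e. the Camacho--Sad index at a singular point on $D$), which vary with $c$ even though the family of curves is equisingular. This is precisely why deformations of foliations are governed not by $H^{1}$ of the automorphism sheaf but by the hypercohomology $\mathbb{H}^{1}$ of the leaf complex $\Theta_{S}\xrightarrow{\mathfrak{D}}\Hom\left(\Theta_{X_{1}},\Theta_{S}/\Theta_{X_{1}}\right)$ of \cite{Gomez}, and it is the route the paper takes: the flat Saito basis gives surjectivity of $\mathbb{H}^{1}\left(\mathcal{M},\Theta_{X_{1}}\right)\to H^{1}\left(\mathcal{M},\Theta_{S}\right)$, exactness of Gómez-Mont's sequence then forces $\overline{\mathfrak{D}}=0$, and a separate local argument (solving $\mathcal{T}_{i}=\left[\tau_{ik},\cdot\right]$ only away from $\textup{Sing}\left(X_{1}^{E}\right)$, where $\mathfrak{D}$ is onto at the level of stalks, then gluing and extending the resulting $2$-forms) converts $\overline{\mathfrak{D}}=0$ into $\overline{\mathfrak{B}}=0$. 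You flag this lifting issue yourself as ``the main obstacle,'' but it is not a verification detail to be carried through: it is the point where the proposed argument fails and where the hypercohomology framework, rather than $H^{1}\left(\ker\mathfrak{B}\right)$, becomes unavoidable.
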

\begin{proof}
Let us denote by $\Theta_{X_{1}}$ the sheaf of tangent vector fields
to the foliation induced on $\mathcal{M}$ by $X_{1}^{E}$$.$ Let
us consider the morphism of sheaves 
\begin{equation}
\Theta_{S}\xrightarrow{\mathfrak{D}}\Hom\left(\Theta_{X_{1}},\Theta_{S}/\Theta_{X_{1}}\right)\label{eq:morphism.sheaf}
\end{equation}
defined by $\mathfrak{D}\left(T\right)=\left(X\mapsto\pi\left[X,T\right]\right)$
where $\left[\cdot\right]$ stands for the Lie bracket and $\pi$
the quotient map $\pi:\Theta_{X_{1}}\to\Theta_{S}/\Theta_{X_{1}}$.
Following \cite{Gomez} (Theorem 1.6), one has the following exact
sequence
\begin{equation}
\mathbb{H}^{1}\left(\mathcal{M},\Theta_{X_{1}}\right)\to H^{1}\left(\mathcal{M},\Theta_{S}\right)\xrightarrow{\overline{\mathfrak{D}}}H^{1}\left(\mathcal{M},\Hom\left(\Theta_{X_{1}},\Theta_{S}/\Theta_{X_{1}}\right)\right).\label{eq:operatorD}
\end{equation}
In this sequence, $\mathbb{H}^{1}\left(\mathcal{M},\Theta_{X_{1}}\right)$
is the first hypercohomology group of the leaf complex associated
to the morphism (\ref{eq:morphism.sheaf}) as defined in \cite{Gomez}.
It is identified with the space of infinitesimal deformations of the
foliation induced by $X_{1}^{E}$. The cohomological group $H^{1}\left(\mathcal{M},\bullet\right)$
is the standard C\v{e}ch cohomology of sheaves. The first cohomology
group $H^{1}\left(\mathcal{M},\Theta_{S}\right)$ is identified with
the space of infinitesimal deformations of $S^{E}$. Finally, $\overline{\mathfrak{D}}$
is the map induced in cohomology by $\mathfrak{D}$.

Assume that $S$ is generic in its moduli space $\moduli.$ Theorem
\ref{Flat.saito.basis} ensures that any small deformation of $S^{E}$
can be followed by a deformation of $X_{1}^{E}.$ As a consequence,
any infinitesimal deformation of $S^{E}$ can be followed by an infinitesimal
deformation of $X_{1}^{E}.$ In other words, in (\ref{eq:operatorD})
the map
\[
\mathbb{H}^{1}\left(\mathcal{M},\Theta_{X_{1}}\right)\to H^{1}\left(\mathcal{M},\Theta_{S}\right)
\]
is onto. Since the sequence (\ref{eq:operatorD}) is exact, the map
$\overline{\mathfrak{D}}$ is the zero map.

Now, let us consider a covering $\left\{ U_{i}\right\} _{i\in I}$
of $\mathcal{M}$ and a cocycle $\left\{ T_{ij}\right\} _{ij}$ 
\[
\left\{ T_{ij}\right\} _{ij}\in Z^{1}\left(\mathcal{M},\left\{ U_{i}\right\} _{i\in I},\Theta_{S}\right).
\]
The map $\overline{\mathfrak{D}}$ being the zero map, the cocycle
$\overline{\mathfrak{D}}\left(\left\{ T_{ij}\right\} _{ij}\right)$
is trivial, that is,
\[
\overline{\mathfrak{D}}\left(\left\{ T_{ij}\right\} _{ij}\right)\equiv0\textup{ in }H^{1}\left(\mathcal{M},\Hom\left(\Theta_{X_{1}},\Theta_{S}/\Theta_{X_{1}}\right)\right).
\]
By definition, there exists $\left\{ \mathcal{T}_{i}\right\} _{i}\in Z^{0}\left(\mathcal{M},\left\{ U_{i}\right\} _{i\in I},\Hom\left(\Theta_{X_{1}},\Theta_{S}/\Theta_{X_{1}}\right)\right)$
such that 
\[
\left[T_{ij},\cdot\right]=\mathcal{T}_{j}-\mathcal{T}_{i}.
\]
At the level of the stack, the map $\mathfrak{D}$ is onto at any
regular point for $X_{1}^{E}$. Thus we can consider a covering of
$U_{i}\setminus\textup{Sing}\left(X_{1}^{E}\right)=\bigcup_{k\in K}U_{ik}$
by open sets $U_{ik}$ such that $\mathfrak{D}$ is onto on $U_{ik}.$
By construction, on any $U_{ik}$ there exists a section $\tau_{ik}$
of $\Theta_{S}$ such that 
\[
\mathcal{T}_{i}=\left[\tau_{ik},\cdot\right].
\]
Therefore, on $U_{ik}\cap U_{ik^{\prime}}$, $\left[\tau_{ik},\cdot\right]=\left[\mathcal{\tau}_{ik^{\prime}},\cdot\right]$.
Thus, appyling $\mathfrak{B}$ yields 
\[
L_{\mathcal{\tau}_{ik}}\Omega\wedge\Omega=\mathfrak{B}\left(\tau_{ik}\right)=\mathfrak{B}\left(\tau_{ik^{\prime}}\right)=L_{\tau_{ik^{\prime}}}\Omega\wedge\Omega.
\]
Therefore, the $2-$forms $\left\{ L_{\mathcal{\tau}_{ik}}\Omega\wedge\Omega\right\} _{k\in K}$
paste in a global $2-$forms $\Omega_{i}$ defined on $U_{i}\setminus\textup{Sing}\left(X_{1}^{E}\right)$
which can be extended to $U_{i}$ since $\textup{Sing}\left(X_{1}^{E}\right)$
is of codimension $2.$ By construction, 
\[
\mathfrak{\overline{B}}\left(\left\{ T_{ij}\right\} \right)\equiv\left\{ \Omega_{j}-\Omega_{i}\right\} ,
\]
which is the lemma.
\end{proof}
The open sets $U_{1}$ and $U_{2}$ defined at the beginning of this
section are Stein as open set in $\mathbb{C}$. Thus, following \cite{SiuThm},
they admit a system of Stein neighborhoods. Since $\Omega^{2}\left(-\overline{n}D-S^{E}\right)$
is coherent, we deduce that there is a covering $\left\{ \mathcal{U}_{1},\mathcal{U}_{2}\right\} $
of $\mathcal{M}$ that is acyclic for $\Omega^{2}\left(-\overline{n}D-S^{E}\right).$
Therefore, one can compute the cohomology using this covering and
thus 
\begin{align*}
H^{1}\left(\mathcal{M},\Omega^{2}\left(-\overline{n}D-S^{E}\right)\right) & =H^{1}\left(\left\{ \mathcal{U}_{1},\mathcal{U}_{2}\right\} ,\Omega^{2}\left(-\overline{n}D-S^{E}\right)\right)
\end{align*}
 which is the quotient 
\begin{equation}
\frac{H^{0}\left(\mathcal{U}_{1}\cap\mathcal{U}_{2},\Omega^{2}\left(-\overline{n}D-S^{E}\right)\right)}{H^{0}\left(\mathcal{U}_{1},\Omega^{2}\left(-\overline{n}D-S^{E}\right)\right)\oplus H^{0}\left(\mathcal{U}_{2},\Omega^{2}\left(-\overline{n}D-S^{E}\right)\right)}.\label{eq:egalite.coho.identification}
\end{equation}

The lemma below is the key to get a lower bound for the Saito number
$\mathfrak{s}\left(S\right)$ of the curve $S.$ 
\begin{lem}
\label{lem:condition-vanish} Let $f_{1}$ be the quotient $\frac{f\circ E}{x_{1}^{\nu\left(S\right)}}$
where $f$ is a reduced equation of $S$. If there exists a Laurent
series $A=\sum a_{i,j}x_{1}^{i}y_{1}^{j}$ holomorphic on $\mathcal{U}_{1}\cap\mathcal{U}_{2}$
with a non vanishing residu $a_{0,-1},$ such that, in the identification
(\ref{eq:egalite.coho.identification}), one has

\[
\left[A\cdot f_{1}x_{1}^{k}\dd x_{1}\wedge\dd y_{1}\right]\equiv0\in H^{1}\left(\mathcal{M},\Omega^{2}\left(-kD-S^{E}\right)\right)
\]

then 
\[
k\geq\nu\left(S\right).
\]
\end{lem}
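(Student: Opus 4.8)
The plan is to make the vanishing of the cohomology class completely explicit through the acyclic covering $\left\{\mathcal{U}_1,\mathcal{U}_2\right\}$ and then read off a single Laurent coefficient. Since this covering computes the cohomology, the hypothesis $\left[A\cdot f_1 x_1^k\,\dd x_1\wedge\dd y_1\right]\equiv0$ means exactly that the overlap section is a \v{C}ech coboundary: there exist sections $\Omega_i\in H^0\left(\mathcal{U}_i,\Omega^2\left(-kD-S^E\right)\right)$ with $A\,f_1 x_1^k\,\dd x_1\wedge\dd y_1=\Omega_1-\Omega_2$ on $\mathcal{U}_1\cap\mathcal{U}_2$. The first step is to describe these sections. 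Because $D$ is not a component of $S^E$, the equations $x_1$ and $f_1$ are coprime, so a section of $\Omega^2\left(-kD-S^E\right)$ over $\mathcal{U}_1$ is divisible by the generator $x_1^k f_1$; hence $\Omega_1=h_1\,x_1^k f_1\,\dd x_1\wedge\dd y_1$ with $h_1$ holomorphic on $\mathcal{U}_1$, and likewise $\Omega_2=h_2\,y_2^k f_2\,\dd x_2\wedge\dd y_2$ with $h_2$ holomorphic on $\mathcal{U}_2$, where $f_2=\left(f\circ E\right)/y_2^{\nu\left(S\right)}$ is the strict transform equation in the second chart.

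The core of the argument is the transition computation. Writing the change of charts as $x_2=1/y_1$ and $y_2=x_1 y_1$, I would compute $\dd x_2\wedge\dd y_2=y_1^{-1}\,\dd x_1\wedge\dd y_1$ and $y_2^k=x_1^k y_1^k$, while the relation $f\circ E=x_1^{\nu\left(S\right)}f_1=y_2^{\nu\left(S\right)}f_2$ yields $f_2=y_1^{-\nu\left(S\right)}f_1$ in the coordinates of the first chart. Substituting these into $\Omega_1-\Omega_2$ and dividing the resulting identity by $x_1^k f_1\,\dd x_1\wedge\dd y_1$ collapses the coboundary relation to the scalar Laurent identity
\[
A=h_1-h_2\,y_1^{\,k-\nu\left(S\right)-1}
\]
on $\mathcal{U}_1\cap\mathcal{U}_2$.

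It then remains to extract the residue. On $\mathcal{U}_1$ the function $h_1$ expands with non-negative powers of both $x_1$ and $y_1$, so it cannot produce the monomial $x_1^0 y_1^{-1}$. On $\mathcal{U}_2$ the function $h_2$ is holomorphic in $x_2=1/y_1$ and $y_2=x_1 y_1$, hence $h_2=\sum_{m,n\geq0}c_{mn}\,x_1^n y_1^{\,n-m}$ and therefore $h_2\,y_1^{\,k-\nu\left(S\right)-1}=\sum_{m,n\geq0}c_{mn}\,x_1^n y_1^{\,n-m+k-\nu\left(S\right)-1}$. Reading the coefficient of $x_1^0 y_1^{-1}$ in the identity above forces $n=0$ and $m=k-\nu\left(S\right)$, so that $a_{0,-1}=-c_{k-\nu\left(S\right),0}$, which is a legitimate term only when $k-\nu\left(S\right)\geq0$. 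Consequently, if $k<\nu\left(S\right)$ the residue $a_{0,-1}$ must vanish; the contrapositive is precisely the desired inequality $k\geq\nu\left(S\right)$.

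The step I expect to be most delicate is the transition computation, where the exponent $k-\nu\left(S\right)-1$ must be assembled correctly from three separate sources: the factor $y_1^{-1}$ coming from $\dd x_2\wedge\dd y_2$, the factor $y_1^{k}$ from $y_2^k$, and the factor $y_1^{-\nu\left(S\right)}$ from $f_2/f_1$; a sign or exponent slip here would shift the threshold. One should also confirm that $a_{0,-1}$ is genuinely the intrinsic obstruction, that is, the one Laurent coefficient that no choice of holomorphic $h_1,h_2$ can absorb. This is the familiar fact that, after identifying $D$ with $\mathbb{P}^1$, the class of $x_1^0 y_1^{-1}$ times the generator is detected by the residue pairing on a negative twist of the canonical bundle, which guarantees that the single coefficient tracked above really governs the vanishing.
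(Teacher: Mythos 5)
Your proposal is correct and follows essentially the same route as the paper: compute the cohomology with the acyclic covering $\left\{\mathcal{U}_{1},\mathcal{U}_{2}\right\}$, write the coboundary condition explicitly using the fact that sections are multiples of $x_{1}^{k}f_{1}$ (resp. $y_{2}^{k}f_{2}$), derive the transition exponent $k-\nu\left(S\right)-1$ from $x_{2}=1/y_{1}$, $y_{2}=x_{1}y_{1}$, and conclude that the coefficient of $x_{1}^{0}y_{1}^{-1}$ can only be produced when $k-\nu\left(S\right)\geq0$. This matches the paper's equation (\ref{eq:coho}) and its resolution of the system $j=0$, $i=k-\nu\left(S\right)$ in $\mathbb{N}^{2}$.
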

\begin{proof}
The global sections of $\Omega^{2}\left(-kD-S^{E}\right)$ on each
open sets $\mathcal{U}_{1}$, $\mathcal{U}_{2}$ and their intersection
are written 
\begin{eqnarray*}
\Omega^{2}\left(-kD-S^{E}\right)\left(\mathcal{U}_{1}\right) & = & \left\{ \left.f\left(x_{1},y_{1}\right)f_{1}x_{1}^{k}\dd x_{1}\wedge\dd y_{1}\right|f\in\mathcal{O}\left(\mathcal{U}_{1}\right)\right\} \\
\Omega^{2}\left(-kD-S^{E}\right)\left(\mathcal{U}_{2}\right) & = & \left\{ \left.g\left(x_{2},y_{2}\right)f_{2}y_{2}^{k}\dd x_{2}\wedge\dd y_{2}\right|g\in\mathcal{O}\left(\mathcal{U}_{2}\right)\right\} \\
\Omega^{2}\left(-kD-S^{E}\right)\left(\mathcal{U}_{1}\cap\mathcal{U}_{2}\right) & = & \left\{ \left.h\left(x_{1},y_{1}\right)f_{1}x_{1}^{k}\dd x_{1}\wedge\dd y_{1}\right|h\in\mathcal{O}\left(\mathcal{U}_{1}\cap\mathcal{U}_{2}\right)\right\} 
\end{eqnarray*}
where $f_{2}=\frac{f\circ E}{y_{2}^{\nu\left(S\right)}}$. Therefore,
the cohomological equation induced by the equality (\ref{eq:egalite.coho.identification})
is written
\[
\begin{array}{r}
h\left(x_{1},y_{1}\right)f_{1}x_{1}^{k}\dd x_{1}\wedge\dd y_{1}=g\left(x_{2},y_{2}\right)f_{2}y_{2}^{k}\dd x_{2}\wedge\dd y_{2}\\
\quad-f\left(x_{1},y_{1}\right)f_{1}x_{1}^{k}\dd x_{1}\wedge\dd y_{1}
\end{array}
\]
which is equivalent to
\begin{equation}
h\left(x_{1},y_{1}\right)=y_{1}^{k-\nu\left(S\right)-1}g\left(\frac{1}{y_{1}},y_{1}x_{1}\right)-f\left(x_{1},y_{1}\right)\label{eq:coho}
\end{equation}
The hypothesis of Lemma \ref{lem:condition-vanish} induces that if
we set $h$ to be the series $\sum a_{i,j}x_{1}^{i}y_{1}^{j}$ then
the equation above has a solution. In particular, the monomial $\frac{a_{0,-1}}{y_{1}}$
has to appear in the Laurent expansion of one of the two terms of
the expression at the right of (\ref{eq:coho}). This is equivalent
to require that the following system and 
\[
\begin{cases}
0=j\\
-1=j-i+k-\nu\left(S\right)-1
\end{cases}\Longleftrightarrow\begin{cases}
j=0\\
i=k-\nu\left(S\right)
\end{cases}.
\]
has a solution in $\mathbb{N}^{2}$. Thus, $k\geq\nu\left(S\right)$.
\end{proof}
\begin{thm}
\label{thm:LowerGenericBound}For $S$ generic in its moduli space
$\moduli$, one has 
\[
\mathfrak{s}\left(S\right)\geq\left\{ \begin{array}{ll}
\left\lfloor \frac{\nu\left(S\right)}{2}\right\rfloor  & \textup{ if }S\textup{ is not of radial type}\\
\\
\left\lceil \frac{\nu\left(S\right)}{2}\right\rceil -1 & \textup{else}
\end{array}\right.,
\]
where $\left\lfloor \star\right\rfloor $ and $\left\lceil \star\right\rceil $
stands respectively for the integer part and the least integer of
$\star$. 
\end{thm}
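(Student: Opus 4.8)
The goal is to convert the vanishing statement of Lemma~\ref{lem:vanishing-map} into a concrete lower bound on $\nu(X_1)$, using Lemma~\ref{lem:condition-vanish} as the final arithmetic trigger. The strategy is to produce, by hand, a specific cohomology class in $H^1\left(\mathcal{M},\Omega^2\left(-\overline{n}D-S^E\right)\right)$ of the precise shape required by Lemma~\ref{lem:condition-vanish}, namely $\left[A\cdot f_1 x_1^k\,\dd x_1\wedge \dd y_1\right]$ with $k=\overline{n}$ and with a Laurent series $A$ having nonvanishing residue $a_{0,-1}$. Since $\overline{\mathfrak{B}}$ is the zero map, this class is automatically trivial in cohomology, and then Lemma~\ref{lem:condition-vanish} forces $\overline{n}\geq\nu(S)$. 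Unwinding the definition of $\overline{n}=2\nu(X_1)+2$ (dicritical case) or $2\nu(X_1)+1$ (non-dicritical case) yields exactly the two branches of the theorem.

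First I would assume $S$ is generic, fix an optimal vector field $X_1$ with $\nu(X_1)=\mathfrak{s}(S)$ and complete it to a Saito basis $\{X_1,X_2\}$ via the proposition following the criterion of Saito. The associated form $\Omega=E^\star\omega$ and the operator $\mathfrak{B}$ are then available, together with the inclusion $\mathfrak{B}(\Theta_S)\subset\Omega^2(-\overline{n}D-S^E)$ of Lemma~\ref{lemma.inclusion.sheaves}. The essential surjectivity noted after that lemma — that the two sheaves agree away from the tangency locus of $X_1^E$ with $D$ — is what lets me realize the candidate class as genuinely lying in the image $\mathfrak{B}(\Theta_S)$, hence in the kernel of $\overline{\mathfrak{B}}$. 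The next step is to exhibit a section $T$ of $\Theta_S$ on $\mathcal{U}_1\cap\mathcal{U}_2$ whose image $\mathfrak{B}(T)=L_T\Omega\wedge\Omega$, written in the chart $(x_1,y_1)$ as a multiple of $f_1 x_1^{\overline{n}}\,\dd x_1\wedge \dd y_1$, carries a nonzero residual term $a_{0,-1}/y_1$. Concretely, in the dicritical computation of Lemma~\ref{lemma.inclusion.sheaves} one has $\mathfrak{B}(T)=\left(u^2 x_1^{2\nu(X_1)+2}\,\partial_{x_1}\beta\right)\dd x_1\wedge \dd y_1$, so choosing the $\partial_{y_1}$-component $\beta$ of $T$ to contain a term producing $y_1^{-1}$ after division by $f_1$ is what generates the required residue; the non-dicritical case is handled symmetrically from the formula $\mathfrak{B}(T)=\left(u^2 x_1^{2\nu(X_1)+1}\,\partial_{y_1}\alpha\right)\dd x_1\wedge \dd y_1$.

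The main obstacle, as I see it, is the bookkeeping that guarantees the residue $a_{0,-1}$ is genuinely nonzero rather than an artifact that can be absorbed into a coboundary. One must verify that the chosen $T$ is defined and tangent to $S^E\cup D$ on the overlap $\mathcal{U}_1\cap\mathcal{U}_2$ but does \emph{not} extend to either $\mathcal{U}_1$ or $\mathcal{U}_2$ separately — otherwise the class would be trivial for a trivial reason and Lemma~\ref{lem:condition-vanish} would not apply. This amounts to controlling the Laurent expansion of $f_1$ and of $\beta$ (or $\alpha$) simultaneously in both charts, which is delicate precisely near the point where $X_1^E$ is tangent to $D$; the genericity of $S$ should be what rules out degenerate configurations there. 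Once the residue is secured, the conclusion is immediate: triviality of $\overline{\mathfrak{B}}$ gives the vanishing hypothesis of Lemma~\ref{lem:condition-vanish} with $k=\overline{n}$, whence $\overline{n}\geq\nu(S)$, i.e.\ $2\mathfrak{s}(S)+2\geq\nu(S)$ (dicritical, giving $\mathfrak{s}(S)\geq\lceil\nu(S)/2\rceil-1$) or $2\mathfrak{s}(S)+1\geq\nu(S)$ (non-dicritical, giving $\mathfrak{s}(S)\geq\lfloor\nu(S)/2\rfloor$). Matching the dicritical case with the radial-type condition of the relevant definition completes the proof.
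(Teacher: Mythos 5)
Your plan reproduces the paper's argument in the dicritical case, where it does work: placing $T=\frac{x_{1}}{y_{1}}\partial_{y_{1}}$ at a point where $X_{1}^{E}$ is transverse to $D$ and $f_{1}\neq0$, the operator $\frac{1}{f_{1}}\mathfrak{B}$ produces a residue with coefficient $\frac{u^{2}\left(0,0\right)}{f_{1}\left(0,0\right)}$, automatically nonzero, and Lemma \ref{lem:condition-vanish} gives $2\mathfrak{s}\left(S\right)+2\geq\nu\left(S\right)$. The gap is in your claim that the non-dicritical case is ``handled symmetrically'' from the formula $\mathfrak{B}\left(T\right)=\left(u^{2}x_{1}^{2\nu\left(X_{1}\right)+1}\partial_{y_{1}}\alpha\right)dx_{1}\wedge dy_{1}$ of Lemma \ref{lemma.inclusion.sheaves}. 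That coefficient is a $\partial_{y_{1}}$-derivative of a Laurent series, and such a derivative never has a $y_{1}^{-1}$ term; so at a point where $u$ is a unit, no choice of meromorphic section $T$ can create the required residue. The paper instead works at a \emph{singular} point of $X_{1}^{E}$, writing $\Omega=x_{1}^{\nu\left(X_{1}\right)}y_{1}^{a}dx_{1}+x_{1}^{\nu\left(X_{1}\right)+1}\left(\cdots\right)$ and $f_{1}=y_{1}^{b}v\left(y_{1}\right)+x_{1}\left(\cdots\right)$, and takes $T=\frac{x_{1}}{y_{1}^{2a-b}}\partial_{x_{1}}$; the residue obtained is $\frac{2a-b}{v\left(0\right)}$, which is nonzero only when $2a\neq b$ at that point.

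This is not bookkeeping, and genericity of $S$ does not rescue you: the degenerate configuration where $2a=b$ at \emph{every} singular point of $X_{1}^{E}$ does occur for generic curves (it forces $\nu\left(S\right)$ to be even), and there the residue method yields nothing. This is precisely the case $\nu\left(S\right)$ even, $S$ not of radial type, with a priori $\nu\left(X_{1}\right)\leq\frac{\nu\left(S\right)}{2}-1$, and the paper spends the entire second half of its proof on it: starting from a Saito basis with $\nu\left(X_{1}\right)=\nu\left(X_{2}\right)$, it adjoins a generic smooth curve $l_{1}$ by the construction (\ref{ajouter.une.courbe}) and applies the already-established odd case of Theorem \ref{thm:LowerGenericBound} to $S\cup l_{1}$; it then adjoins a second curve $l_{2}$ and applies the theorem to $S\cup l_{1}\cup l_{2}$ (legitimate because that curve has a smooth branch, hence a singular point with $b=1$ odd); finally a determinant argument ($ad-bc\neq0$, followed by division by the equations of $l_{1}$ and $l_{2}$) produces two vector fields tangent to $S$ of multiplicities strictly bigger than $\frac{\nu\left(S\right)}{2}$ realizing the Saito criterion for $S$, contradicting (\ref{inegalite.fondamentale.saito}). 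Without this step, or a substitute for it, your argument proves the theorem only when $\nu\left(S\right)$ is odd, when $S$ is of radial type, or when some singular point of $X_{1}^{E}$ satisfies $2a\neq b$; the remaining even, non-radial case --- the hardest one --- is left unproved.
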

In the moduli space $\moduli,$ the lower bound above holds only for
the generic point. For instance, the Saito number of a union of any
number of germs of straight lines is 1, since the radial vector field
$x\partial x+y\partial y$ is in the Saito module, whereas the algebraic
multiplicity $\nu\left(S\right)$ goes to infinity with the number
of components. Even if the curve $S$ is irreducible, one cannot drop
the assumption of $S$ being generic in its moduli space, as it can
be seen in the following example due to M. Hernandes known as \emph{deformation
by socle} : let $S$ be the irreducible curve 
\[
\left\{ y^{p}-x^{q}+x^{q-2}y^{p-2}=0\right\} 
\]
with $p\wedge q=1$ and $4=p<q.$ Its algebraic multiplicity is equal
to $p$ whereas its Saito number $\mathfrak{s}\left(S\right)$ is
equal to $2$ regardless the value of $p$. Indeed, the vector field
$X_{1}$ written
\begin{align*}
X_{1} & =\left(y+\frac{\left(p-2\right)\left(q-2\right)}{pq}x^{q-4}y^{p-3}\right)\left(px\partial_{x}+qy\partial_{y}\right)\\
 & +\frac{\left(p-2\right)q-2p}{q}x^{q-2}\partial_{y}-\left(p-2\right)\frac{\left(p-2\right)q-2p}{pq}x^{p-3}y^{q-3}\partial_{x}
\end{align*}
is optimal for $S.$ 
\begin{proof}[Proof of Theorem \ref{thm:LowerGenericBound}]
Let $X_{1}$ be a generic optimal vector field for $S.$ Since we
assume $S$ generic in its moduli space, the operator $\overline{\mathfrak{B}}$
associated to $X_{1}$ and defined in Lemma \ref{lem:vanishing-map}
is trivial.

Suppose, first that $X_{1}$ is dicritical. Let us suppose that in
the coordinates $\left(x_{1},y_{1}\right)$, the vector field $X_{1}^{E}$
is transverse to $D$ at $\left(0,0\right)$ and that $f_{1}=\frac{f\circ E}{x_{1}^{\nu\left(S\right)}}$
does not vanish at $\left(0,0\right)$. We can suppose that, in these
coordinates, $\Omega$ is written
\[
\Omega=ux_{1}^{\nu\left(X_{1}\right)+1}\dd y_{1},\quad u\left(0\right)\neq0.
\]
The image of the vector field 
\[
T=\frac{x_{1}}{y_{1}}\partial_{y_{1}}
\]
 by $\frac{1}{f_{1}}\mathfrak{B}$ is written 
\[
\frac{1}{f_{1}}\mathfrak{B}\left(T\right)=\frac{1}{f_{1}}L_{T}\Omega\wedge\Omega=\frac{u^{2}\left(0,0\right)}{f_{1}\left(0,0\right)}x_{1}^{\overline{n}}\frac{1}{y_{1}}\dd x_{1}\wedge\dd y_{1}+x_{1}^{\overline{n}+1}\left(\cdots\right).
\]
This meromorphic $2-$form considered as a cocycle in $Z^{1}\left(\left\{ \mathcal{U}_{1},\mathcal{U}_{2}\right\} ,\Omega^{2}\left(-\overline{n}D-S^{E}\right)\right)$
has to be trivial in cohomology according to Lemma \ref{lem:vanishing-map}.
Thus, Lemma \ref{lem:condition-vanish} ensures that $\overline{n}=2\nu\left(X_{1}\right)+2\geq\nu\left(S\right),$
which is also written 
\[
\mathfrak{s}\left(S\right)=\nu\left(X_{1}\right)\geq\frac{\nu\left(S\right)}{2}-1.
\]
Therefore, if $X_{1}$ is dicritical the theorem is proved.

Suppose now that $X_{1}$ is not dicritical. Let us suppose that $\left(0,0\right)$
is a singular point of $X_{1}^{E}$. Locally around $\left(0,0\right)$,
$\Omega$ can be written 
\[
\Omega=x_{1}^{\nu\left(X_{1}\right)}y_{1}^{a}\dd x_{1}+x_{1}^{\nu\left(X_{1}\right)+1}\left(\cdots\right)
\]
where $a$ is some positive integer. Let us write
\[
f_{1}=y_{1}^{b}v\left(y_{1}\right)+x_{1}\left(\cdots\right),\quad v\left(0\right)\neq0
\]
where $b$ is some positive integer. Considering the meromorphic vector
field 
\[
T=\frac{x_{1}}{y_{1}^{2a-b}}\partial_{x_{1}},
\]
we apply the operator $\frac{1}{f_{1}}\mathfrak{B}$ and obtain 
\[
\frac{1}{f_{1}}\mathfrak{B}\left(T\right)=\frac{\left(2a-b\right)}{v\left(0\right)}\frac{x_{1}^{\overline{n}}}{y_{1}}\dd x_{1}\wedge\dd y_{1}+x_{1}^{\overline{n}+1}\left(\cdots\right).
\]
Suppose that there exists a singular point of $X_{1}^{E}$ such that
$2a\neq b.$ Then, Lemma \ref{lem:condition-vanish} ensures that
$\overline{n}=2\nu\left(X_{1}\right)+1\geq\nu\left(S\right),$ which
is written 
\begin{equation}
\mathfrak{s}\left(S\right)=\nu\left(X_{1}\right)\geq\frac{\nu\left(S\right)-1}{2}.\label{inegalite.bien}
\end{equation}
If the equality $2a=b$ is true for any singular points, then $\nu\left(S\right)$
is even. Thus, the theorem is proved when
\begin{itemize}
\item $\nu\left(S\right)$ is odd
\item or $\nu\left(S\right)$ is even and for some singular points of $X_{1}^{E}$,
one has $b\neq2a.$
\item or if $S$ is radial.
\end{itemize}
Finally, suppose that $\nu\left(S\right)$ is even and $S$ is not
radial. Consider a Saito basis $\left\{ X_{1},X_{2}\right\} $ for
$S$ with $\nu\left(X_{1}\right)=\nu\left(X_{2}\right)$. If $\nu\left(X_{1}\right)=\frac{\nu\left(S\right)}{2}$
then the property is proved. Therefore, assume that $\nu\left(X_{1}\right)\leq\frac{\nu\left(S\right)}{2}-1$.
Let $l_{1}$ be a generic smooth curve. Using the construction introduced
at (\ref{ajouter.une.courbe}), we obtain a Saito basis for $S\cup l_{1}$
of the form 
\[
\left\{ X_{1}+\phi_{1}X_{2},L_{1}X_{2}\right\} ,\ \phi_{1}\left(0\right)\neq0\textup{ and }l_{1}=\left\{ L_{1}=0\right\} 
\]
If $\nu\left(X_{1}+\phi_{1}X_{2}\right)=\nu\left(X_{1}\right)$ then
\[
\nu\left(X_{1}+\phi_{1}X_{2}\right)\leq\frac{\nu\left(S\right)}{2}-1<\frac{\nu\left(S\cup l_{1}\right)-1}{2}
\]
which contradicts Theorem \ref{thm:LowerGenericBound} applied to
$S\cup l_{1}$, the valuation $\nu\left(S\cup l_{1}\right)$ being
odd. Therefore, $\nu\left(X_{1}+\phi_{1}X_{2}\right)\geq\frac{\nu\left(S\right)}{2}$
and since $\nu\left(L_{1}X_{2}\right)\geq\frac{\nu\left(S\right)}{2}$
and $\nu\left(X_{2}\right)\leq\frac{\nu\left(s\right)}{2}-1$, considering
if necessary $X_{1}+\phi_{1}X_{2}+L_{1}X_{2},$ we obtain a basis
of Saito for $S\cup l_{1}$ written
\begin{equation}
\left\{ X_{1}+\phi_{1}X_{2},L_{1}X_{2}\right\} ,\qquad\phi_{1}\left(0\right)\ne0\label{eq:premiere.base}
\end{equation}
both of these vector fields being non dicritical and of multiplicity
$\frac{\nu\left(S\right)}{2}$. Using again the construction (\ref{ajouter.une.courbe}),
we add one more generic curve $l_{2}$ and obtain a basis of Saito
of the form
\[
\left\{ \underbrace{L_{2}\left(X_{1}+\phi_{1}X_{2}\right)}_{Y_{1}},\underbrace{L_{1}X_{2}+\phi_{2}\left(X_{1}+\phi_{1}X_{2}\right)}_{Y_{2}}\right\} ,~\phi_{2}\left(0\right)\neq0\textup{ and }l_{2}=\left\{ L_{2}=0\right\} .
\]
We can apply Theorem \ref{thm:LowerGenericBound} to $S\cup l_{1}\cup l_{2}$
since the latter curve has a smooth component for which $b=1$ is
not even. Therefore, the two above vector fields are of multiplicity
$\frac{\nu}{2}+1$ and not dicritical. According to the Saito criterion
applied to (\ref{eq:premiere.base}), one has
\[
\left(X_{1}+\phi_{1}X_{2}\right)\wedge L_{1}X_{2}=uL_{1}f,\ u\left(0\right)\neq0.
\]
Therefore, $L_{1}$ cannot divide $X_{1}+\phi_{1}X_{2}.$ Now consider
any couple of non vanishing functions $\alpha$ and $\beta$. Writing
\begin{equation}
\alpha Y_{1}+\beta Y_{2}=\left(\beta\phi_{2}+\alpha L_{2}\right)\left(X_{1}+\phi_{1}X_{2}\right)+\beta L_{1}X_{2}\label{eq:crit.div}
\end{equation}
ensures that $\alpha Y_{1}+\beta Y_{2}$ cannot be divided by $L_{1}L_{2}$.
Fix some coordinates $\left(x,y\right)$ such that $L_{1}=x$ and
$L_{2}=y$. Taking a suitable linear combination of $Y_{1}$ and $Y_{2}$
we can suppose that they are written 
\begin{align*}
Y_{1} & =a\left(x\right)x^{p}\partial_{x}+b\left(y\right)y^{q}\partial_{y}+xy\left(\cdots\right)\\
Y_{2} & =c\left(x\right)x^{p}\partial_{x}+d\left(y\right)y^{q}\partial_{y}+xy\left(\cdots\right)
\end{align*}
where $a,b,c$ and $d$ are non-vanishing germs of functions and $p$
and $q$ some integers bigger than $\frac{\nu}{2}+1$. Dividing $Y_{1}$
and $Y_{2}$ respectively by $b$ and $d$, and making a suitable
change of coordinates of the form $\left(x,y\right)\mapsto\left(u\left(x\right),y\right)$,
we can suppose that $Y_{1}$ and $Y_{2}$ are written 
\begin{align*}
Y_{1} & =ax^{p}\partial_{x}+by^{q}\partial_{y}+xy\left(\cdots\right)\\
Y_{2} & =c\left(x\right)x^{p}\partial_{x}+dy^{q}\partial_{y}+xy\left(\cdots\right)
\end{align*}
where $a,b$ and $d$ belongs to $\mathbb{C}\setminus\left\{ 0\right\} $.
Finally, considering the vector field 
\[
Y_{2}-\frac{\left(c\left(x\right)-c\left(0\right)\right)}{a}Y_{1},
\]
we can write
\begin{align*}
Y_{1} & =ax^{p}\partial_{x}+by^{q}\partial_{y}+xy\left(\cdots\right)\\
Y_{2} & =cx^{p}\partial_{x}+dy^{q}\partial_{y}+xy\left(\cdots\right)
\end{align*}
where $a,b,c$ and $d$ are non vanishing complex numbers. Now, the
Saito criterion written
\[
Y_{1}\wedge Y_{2}=uxyf
\]
where $u$ is a unit ensures that
\[
\left(cY_{1}-aY_{2}\right)\wedge\left(dY_{1}-bY_{2}\right)=\left(ad-bc\right)Y_{1}\wedge Y_{2}=u\left(ad-bc\right)xyf.
\]
If $ad-bc=0$ then considering $\left(\begin{array}{c}
\alpha\\
\beta
\end{array}\right)$ in the kernel of the matrix $\left(\begin{array}{cc}
a & c\\
b & d
\end{array}\right)$ yields a linear combination written
\begin{equation}
\alpha Y_{1}+\beta Y_{2}=xy\left(\cdots\right)\label{eq:division}
\end{equation}
Since neither $Y_{1}$ nor $Y_{2}$ can be divided by $xy$, one has
$\alpha\neq0$ and $\beta\neq0.$ According to (\ref{eq:crit.div}),
$\alpha Y_{1}+\beta Y_{2}$ cannot be divided by $xy$ too. That is
a contradiction with (\ref{eq:division}). Therefore, $ad-bc\neq0$
and the expression 
\[
\frac{\left(cY_{1}-aY_{2}\right)}{y}\wedge\frac{\left(dY_{1}-bY_{2}\right)}{x}=u\left(ad-bc\right)f
\]
is the Saito criterion for the curve $S.$ However, both vector fields
in the product above have mutiplicities bigger than $\frac{\nu\left(S\right)}{2}$
which is a contradiction with the initial assumption.
\end{proof}

\subsection{Generic Saito basis.}

The generic lower bound of Theorem \ref{thm:LowerGenericBound} induces
some properties for a Saito basis of a generic curve. In this section,
we explore some of them. 

To do so, we are going to use frequently the following lemma that
is a direct consequence of the criterion of Saito.
\begin{lem}
\label{lem:0or-1}Let $\left\{ X_{1},X_{2}\right\} $ be a Saito basis
$S.$ Then
\begin{enumerate}
\item if $\nu\left(X_{1}\right)+\nu\left(X_{2}\right)<\nu\left(S\right)$
and $X_{1}$ is dicritical then $X_{2}$ is dicritical.
\item if $\nu\left(X_{1}\right)+\nu\left(X_{2}\right)=\nu\left(S\right)$
and $X_{1}$ is dicritical then $X_{2}$ is not dicritical.
\item If $S$ is generic
\item in its moduli space, then one can suppose that 
\[
\nu\left(S\right)-1\leq\nu\left(X_{1}\right)+\nu\left(X_{2}\right)\leq\nu\left(S\right).
\]
\end{enumerate}
\end{lem}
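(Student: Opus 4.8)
The three assertions all flow from reading the criterion of Saito (\ref{Saito.fondamental.wedge}) at the level of initial forms. Write $m_i=\nu\left(X_i\right)$ and let $X_i^{(m_i)}$ denote the homogeneous initial part of $X_i$. The plan is to exploit that the lowest-degree homogeneous part of $X_1\wedge X_2$ is exactly $X_1^{(m_1)}\wedge X_2^{(m_2)}$, a homogeneous polynomial of degree $m_1+m_2$. Since the criterion gives $\nu\left(X_1\wedge X_2\right)=\nu\left(f\right)=\nu\left(S\right)$ while $\nu\left(X_1\wedge X_2\right)\geq m_1+m_2$ always holds, one obtains the dichotomy: $m_1+m_2=\nu\left(S\right)$ exactly when $X_1^{(m_1)}\wedge X_2^{(m_2)}\neq0$, and $m_1+m_2<\nu\left(S\right)$ exactly when $X_1^{(m_1)}\wedge X_2^{(m_2)}\equiv0$. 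I will also use the description of dicriticality recalled in the subsection on curves of radial type: $X$ is dicritical precisely when $X^{(\nu(X))}=R\left(x\partial_x+y\partial_y\right)$ for some nonzero homogeneous $R$.

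For item (1), assume $m_1+m_2<\nu\left(S\right)$, so $X_1^{(m_1)}\wedge X_2^{(m_2)}\equiv0$; writing the dicritical field as $X_1^{(m_1)}=R_1\left(x\partial_x+y\partial_y\right)$ and $X_2^{(m_2)}=a\partial_x+b\partial_y$ with $a,b$ homogeneous of degree $m_2$, a one-line determinant computation gives $X_1^{(m_1)}\wedge X_2^{(m_2)}=R_1\left(xb-ya\right)$. As $\mathbb{C}\left[x,y\right]$ is a domain and $R_1\not\equiv0$, this forces $xb-ya\equiv0$, i.e. $X_2^{(m_2)}$ is again a multiple of $x\partial_x+y\partial_y$, so $X_2$ is dicritical. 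For item (2), assume $m_1+m_2=\nu\left(S\right)$, so $X_1^{(m_1)}\wedge X_2^{(m_2)}\neq0$. If $X_2$ were also dicritical, both initial parts would be multiples of $x\partial_x+y\partial_y$, whose self-wedge vanishes, contradicting non-vanishing; hence $X_2$ is not dicritical.

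For item (3) the upper bound $m_1+m_2\leq\nu\left(S\right)$ is simply (\ref{inegalite.fondamentale.saito}). For the lower bound I would fix a basis with $X_1$ optimal, so that $m_1=\mathfrak{s}\left(S\right)\leq m_2$, and invoke Theorem \ref{thm:LowerGenericBound}. If $S$ is not of radial type then $m_1\geq\left\lfloor \nu\left(S\right)/2\right\rfloor $, whence $m_1+m_2\geq2\left\lfloor \nu\left(S\right)/2\right\rfloor \geq\nu\left(S\right)-1$; if $S$ is of radial type with $\nu\left(S\right)$ odd then $m_1\geq\left\lceil \nu\left(S\right)/2\right\rceil -1=\left(\nu\left(S\right)-1\right)/2$ and again $m_1+m_2\geq\nu\left(S\right)-1$. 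The remaining case, $S$ of radial type with $\nu\left(S\right)$ even, is where the naive estimate yields only $m_1+m_2\geq\nu\left(S\right)-2$, and I expect this to be the main obstacle. To overcome it I would add a generic smooth curve $l$, so that $S\cup l$ is still generic in its moduli space with $\nu\left(S\cup l\right)=\nu\left(S\right)+1$ odd; applying Theorem \ref{thm:LowerGenericBound} to $S\cup l$ gives $\mathfrak{s}\left(S\cup l\right)\geq\nu\left(S\right)/2$ whether or not $S\cup l$ is radial. Feeding a minimal Saito basis of $S\cup l$, taken with equal valuations $\mathfrak{s}\left(S\cup l\right)$, into the division construction of the subsection on Saito bases for $S$ and $S\cup l$ (see Proposition \ref{division.saito.process} and (\ref{ajouter.une.courbe})) would then produce a Saito basis $\left\{ X_1^{\prime},X_2^{\prime}\right\} $ of $S$ with $\nu\left(X_1^{\prime}\right)\geq\mathfrak{s}\left(S\cup l\right)-1\geq\nu\left(S\right)/2-1$ and $\nu\left(X_2^{\prime}\right)=\mathfrak{s}\left(S\cup l\right)\geq\nu\left(S\right)/2$, so that $\nu\left(X_1^{\prime}\right)+\nu\left(X_2^{\prime}\right)\geq\nu\left(S\right)-1$, as required. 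In this way the even radial case reduces to the odd one through the very adding-a-line mechanism already employed in the proof of Theorem \ref{thm:LowerGenericBound}.
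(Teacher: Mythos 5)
Your proposal is correct and follows essentially the same route as the paper: items (1)--(2) rest on the initial-part remark $X_{1}^{\left(\nu\left(X_{1}\right)\right)}\wedge X_{2}^{\left(\nu\left(X_{2}\right)\right)}\equiv0\Longleftrightarrow\nu\left(X_{1}\right)+\nu\left(X_{2}\right)<\nu\left(S\right)$ (which you prove in detail where the paper merely states it), and item (3) combines Theorem \ref{thm:LowerGenericBound}, the bound (\ref{inegalite.fondamentale.saito}), and the add-a-generic-line/division mechanism of Proposition \ref{division.saito.process}, exactly as in the paper. The only cosmetic difference is your case split (radiality first, then parity), which confines the line-addition trick to the radial-even case while the non-radial even case is handled by direct numerics, whereas the paper splits by parity and uses the trick for all even $\nu\left(S\right)$; both arguments rely on the same implicit assumption, used throughout the paper, that $S\cup l$ remains generic in its moduli space.
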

\begin{proof}
Properties $\left(1\right)$ and $\left(2\right)$ are consequences
of the following remark
\[
X_{1}^{\left(\nu\left(X_{1}\right)\right)}\wedge X_{2}^{\left(\nu\left(X_{2}\right)\right)}\equiv0\Longleftrightarrow\nu\left(X_{1}\right)+\nu\left(X_{2}\right)<\nu\left(S\right).
\]
For $\left(3\right),$ first, we recall that the sum $\nu\left(X_{1}\right)+\nu\left(X_{2}\right)$
cannot exceed $\nu\left(S\right)$ as noticed in (\ref{inegalite.fondamentale.saito}).
If $\nu\left(S\right)$ is odd, Theorem \ref{thm:LowerGenericBound}
gives the inequalities 
\[
\nu\left(X_{i}\right)\geq\frac{\nu\left(S\right)-1}{2},\ i=1,2
\]
and thus $\nu\left(X_{1}\right)+\nu\left(X_{2}\right)\ge\nu\left(S\right)-1$,
which is the lemma.

If $\nu\left(S\right)$ is even, adding a generic line $l$ to $S$
yields a Saito basis of $S\cup l$ for which, in view of the previous
arguments - $\nu\left(S\cup l\right)$ is odd - , one has
\[
\nu\left(X_{1}\right)+\nu\left(X_{2}\right)\geq\nu\left(S\cup l\right)-1=\nu\left(S\right).
\]
By the process described in Proposition \ref{division.saito.process},
the induced Saito basis $\left\{ X_{1}^{\prime},X_{2}\right\} $ of
$S$ satisfies
\[
\nu\left(X_{1}^{\prime}\right)+\nu\left(X_{2}\right)\geq\nu\left(X_{1}\right)+\nu\left(X_{2}\right)-1\geq\nu\left(S\right)-1,
\]
which ends the proof of the lemma.
\end{proof}
The next lemma ensures somehow that both inequalities identified in
Theorem \ref{thm:LowerGenericBound} cannot be reached at the same
time.
\begin{lem}
\label{lemma-cannot-happen-both}Let $S$ be a generic curve of radial
type. Then there is no non dicritical vector field $X$ in $\textup{Der}\left(\log S\right)$
with $\nu\left(X\right)=\left\lfloor \frac{\nu\left(S\right)}{2}\right\rfloor $.
\end{lem}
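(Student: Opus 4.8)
The plan is to argue by contradiction, exploiting the two-sided control on the Saito number that we already have: by (\ref{saito.moitie.de.la.courbe}) one always has $\mathfrak{s}(S)\le\nu(S)/2$, while Theorem \ref{thm:LowerGenericBound} for a generic curve of radial type gives $\mathfrak{s}(S)\ge\lceil\nu(S)/2\rceil-1$. I would first treat the cases in which $\lfloor\nu(S)/2\rfloor$ already equals $\mathfrak{s}(S)$: there, a field $X$ with $\nu(X)=\lfloor\nu(S)/2\rfloor$ is by definition optimal, hence dicritical by the very definition of radial type, contradicting the hypothesis. When $\nu(S)$ is odd this disposes of everything, since $\lceil\nu(S)/2\rceil-1=\lfloor\nu(S)/2\rfloor$ and the two inequalities pin $\mathfrak{s}(S)=\lfloor\nu(S)/2\rfloor$ exactly. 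When $\nu(S)=2m$ is even, the same remark settles the subcase $\mathfrak{s}(S)=m$, so only $\mathfrak{s}(S)=m-1$ remains.

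For that remaining case I would fix a Saito basis $\{X_1,X_2\}$ furnished by Lemma \ref{lem:0or-1}(3), so that $\nu(X_1)+\nu(X_2)\ge\nu(S)-1=2m-1$. Since $\min(\nu(X_1),\nu(X_2))=\mathfrak{s}(S)=m-1$, the two generators cannot both be optimal (their valuations would sum to $2m-2$), so I may order the basis with $\nu(X_1)=m-1$ and hence $m\le\nu(X_2)\le m+1$ by the Saito inequality (\ref{inegalite.fondamentale.saito}). As $\nu(X_1)=\mathfrak{s}(S)$, the field $X_1$ is optimal and therefore dicritical; its initial part is $X_1^{(m-1)}=R_1(x\partial_x+y\partial_y)$ with $R_1\not\equiv0$ homogeneous. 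This is precisely where Lemma \ref{lem:0or-1}(3) is essential: it rules out the configuration in which both generators are optimal, a configuration in which the degree-$m$ parts of $X_1$ and $X_2$ would remain uncontrolled.

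Finally I would expand the hypothetical non-dicritical $X$ as $X=aX_1+bX_2$ and read off its homogeneous part of degree $m$. Since $\nu(X_2)\ge m$, the coefficient $a$ cannot be a unit, for otherwise the degree-$(m-1)$ part $a(0)X_1^{(m-1)}$ would be nonzero and force $\nu(X)=m-1$; thus $a(0)=0$, which annihilates the uncontrolled term $a(0)X_1^{(m)}$. The surviving contributions to $X^{(m)}$ are $a^{(1)}X_1^{(m-1)}$ together with $b(0)X_2^{(m)}$ when $\nu(X_2)=m$ (and nothing from $X_2$ when $\nu(X_2)=m+1$). In the former case $\nu(X_1)+\nu(X_2)=2m-1<\nu(S)$ with $X_1$ dicritical, so Lemma \ref{lem:0or-1}(1) makes $X_2$ dicritical too and $X_2^{(m)}=R_2(x\partial_x+y\partial_y)$. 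In every case $X^{(m)}$ is a homogeneous multiple of the radial field $x\partial_x+y\partial_y$; it cannot vanish, since that would give $\nu(X)>m$, so it is the genuine initial part of $X$ and $X$ is dicritical, contradicting the choice of $X$. I expect the only real delicacy to be the bookkeeping that confines $\nu(X_2)$ to $\{m,m+1\}$ and guarantees $a(0)=0$, both of which rest on Lemma \ref{lem:0or-1} and the Saito inequality; once these are in place the radiality of the relevant initial parts does the rest.
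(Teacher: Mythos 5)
Your proof is correct and follows essentially the same route as the paper: the easy cases (odd $\nu(S)$, or even $\nu(S)$ with $\mathfrak{s}(S)=\nu(S)/2$) are dispatched by noting that such an $X$ would be optimal and hence dicritical by radiality, and the remaining case $\nu(S)=2m$, $\mathfrak{s}(S)=m-1$ is handled by a Saito basis with a dicritical optimal generator $X_{1}$ and an analysis of initial parts resting on Lemma \ref{lem:0or-1}. Your use of Lemma \ref{lem:0or-1}(3) to pin $\nu(X_{2})\in\{m,m+1\}$ and the explicit expansion $X=aX_{1}+bX_{2}$ with $a(0)=0$ simply make precise the step the paper compresses into ``In any case, the lemma follows,'' which is a welcome (but not structurally different) completion of the same argument.
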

\begin{proof}
Consider an optimal dicritical vector field $X_{1}$ and $X_{2}$
a vector field such that $\left\{ X_{1},X_{2}\right\} $ is a Saito
basis of S. If $\nu\left(S\right)$ is even, then Theorem \ref{eq:operatorD}
ensures that $\nu\left(X_{1}\right)\geq\frac{\nu\left(S\right)}{2}-1$.
If $\nu\left(X_{1}\right)=\frac{\nu\left(S\right)}{2}$ then the lemma
follows from the definition of $S$ being radial. If $\nu\left(X_{1}\right)=\frac{\nu\left(S\right)}{2}-1$
then either $X_{1}^{\left(\nu\left(X_{1}\right)\right)}\wedge X_{2}^{\left(\nu\left(X_{2}\right)\right)}=0$
or $\nu\left(X_{2}\right)\geq\frac{\nu\left(S\right)}{2}+1.$ In any
case, the lemma follows. Finally, if $\nu\left(S\right)$ is odd and
$S$ radial, by definition, every vector field of multiplicity $\left\lfloor \frac{\nu\left(S\right)}{2}\right\rfloor =\frac{\nu\left(S\right)-1}{2}$
is dicritical.
\end{proof}
In the proposition below, we are going to identify precisely the type
of Saito basis that may occur for a generic curve. In the statement
of the theorem, we introduce some notations for the identified classes.
\begin{thm}
\label{SaitoBasisForm}Let $S$ be a curve generic in its moduli space.
Then there exists a Saito basis $\left\{ X_{1},X_{2}\right\} $ for
$S$ with one of the following forms
\begin{itemize}
\item if $\nu\left(S\right)$ is even
\begin{itemize}
\item[$\left(\mathfrak{E}\right)$] : $\nu\left(X_{1}\right)=\nu\left(X_{2}\right)=\frac{\nu\left(S\right)}{2}$,
$X_{1}$ and $X_{2}$ are non dicritical.
\item[$\left(\mathfrak{E}_{d}\right)$] : $\nu\left(X_{1}\right)=\nu\left(X_{2}\right)-1=\frac{\nu\left(S\right)}{2}-1$,
$X_{1}$ and $X_{2}$ are dicritical.
\item[$\left(\mathfrak{E}_{d}^{\prime}\right)$] : $\nu\left(X_{1}\right)=\nu\left(X_{2}\right)-2=\frac{\nu\left(S\right)}{2}-1$,
$X_{1}$ is dicritical but not $X_{2}.$
\end{itemize}
\item if $\nu\left(S\right)$ is odd
\begin{itemize}
\item[$\left(\mathfrak{O}\right)$] : $\nu\left(X_{1}\right)=\nu\left(X_{2}\right)-1=\frac{\nu-1}{2}$,
$X_{1}$ and $X_{2}$ are non dicritical.
\item[$\left(\mathfrak{O}_{d}\right)$] : $\nu\left(X_{1}\right)=\nu\left(X_{2}\right)=\frac{\nu-1}{2}$,
$X_{1}$ and $X_{2}$ are dicritical.
\item[$\left(\mathfrak{O}_{d}^{\prime}\right)$] : $\nu\left(X_{1}\right)=\nu\left(X_{2}\right)-1=\frac{\nu-1}{2}$,
$X_{1}$ is dicritical but not $X_{2}.$
\end{itemize}
\end{itemize}
Moreover, if $\left\{ X_{1},X_{2}\right\} $ is a generic Saito basis
for $S$ then there exists an holomorphic function $h$ such that
\[
\left\{ X_{1},X_{2}-hX_{1}\right\} 
\]
 has one of the above type.
\end{thm}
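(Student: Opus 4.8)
The plan is to run a case analysis organized by the parity of $\nu\left(S\right)$ and by whether or not $S$ is of radial type, using Theorem \ref{thm:LowerGenericBound} to pin down the exact value of $\mathfrak{s}\left(S\right)$ and then reading off the admissible valuations of a Saito basis. First I would fix a \emph{generic} optimal vector field $X_{1}$, so that $\nu\left(X_{1}\right)=\mathfrak{s}\left(S\right)$; since $S$ is of radial type exactly when \emph{all} optimal vector fields are dicritical, a generic such $X_{1}$ is non-dicritical precisely when $S$ is not of radial type, and dicritical otherwise. Using the proposition asserting that an optimal vector field extends to a Saito basis, I complete $X_{1}$ to a Saito basis $\left\{X_{1},X_{2}\right\}$. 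The fundamental inequality \eqref{inegalite.fondamentale.saito} gives $\nu\left(X_{2}\right)\leq\nu\left(S\right)-\mathfrak{s}\left(S\right)$, while Lemma \ref{lem:0or-1}(3) gives $\nu\left(X_{1}\right)+\nu\left(X_{2}\right)\geq\nu\left(S\right)-1$; together these confine $\nu\left(X_{2}\right)$ to at most two consecutive values, and the pair $\left(\text{parity},\text{radial type}\right)$ will single out the forms: even non-radial gives $\left(\mathfrak{E}\right)$, even radial gives $\left(\mathfrak{E}_{d}\right)$ or $\left(\mathfrak{E}_{d}^{\prime}\right)$, odd non-radial gives $\left(\mathfrak{O}\right)$, and odd radial gives $\left(\mathfrak{O}_{d}\right)$ or $\left(\mathfrak{O}_{d}^{\prime}\right)$.

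Determining $\mathfrak{s}\left(S\right)$ is the first substantive step. When $\nu\left(S\right)$ is odd one has $\left\lfloor\nu\left(S\right)/2\right\rfloor=\left\lceil\nu\left(S\right)/2\right\rceil-1$, so Theorem \ref{thm:LowerGenericBound} and \eqref{saito.moitie.de.la.courbe} force $\mathfrak{s}\left(S\right)=\left(\nu\left(S\right)-1\right)/2$ in both the radial and non-radial cases. When $\nu\left(S\right)$ is even and $S$ is not radial, the bound $\mathfrak{s}\left(S\right)\geq\nu\left(S\right)/2$ combined with \eqref{saito.moitie.de.la.courbe} yields $\mathfrak{s}\left(S\right)=\nu\left(S\right)/2$. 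When $\nu\left(S\right)$ is even and $S$ is radial, I would rule out $\mathfrak{s}\left(S\right)=\nu\left(S\right)/2$: in that situation $\nu\left(X_{2}\right)=\nu\left(S\right)/2=\mathfrak{s}\left(S\right)$ would be forced, so $X_{2}$ would be optimal hence dicritical by radiality, while $\nu\left(X_{1}\right)+\nu\left(X_{2}\right)=\nu\left(S\right)$ together with Lemma \ref{lem:0or-1}(2) would force $X_{2}$ non-dicritical, a contradiction (equivalently, invoke Lemma \ref{lemma-cannot-happen-both}). Hence $\mathfrak{s}\left(S\right)=\nu\left(S\right)/2-1$ in the even radial case.

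With $\mathfrak{s}\left(S\right)$ fixed, the dicritical behaviour of $X_{2}$ is decided by the sum $\nu\left(X_{1}\right)+\nu\left(X_{2}\right)$: Lemma \ref{lem:0or-1}(1) propagates diciriticality from $X_{1}$ to $X_{2}$ when the sum is strictly below $\nu\left(S\right)$, Lemma \ref{lem:0or-1}(2) forbids $X_{2}$ from being dicritical when the sum equals $\nu\left(S\right)$ and $X_{1}$ is dicritical, and in the even radial case Lemma \ref{lemma-cannot-happen-both} additionally forces any valuation-$\nu\left(S\right)/2$ field to be dicritical. Tabulating the two possible values of $\nu\left(X_{2}\right)$ against these rules produces $\left(\mathfrak{E}_{d}\right)$ or $\left(\mathfrak{E}_{d}^{\prime}\right)$, and $\left(\mathfrak{O}_{d}\right)$ or $\left(\mathfrak{O}_{d}^{\prime}\right)$, in the radial cases. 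In the non-radial cases the only remaining point is to arrange that $X_{2}$ is \emph{also} non-dicritical. Here I would replace $X_{2}$ by $X_{2}-hX_{1}$, which leaves the Saito criterion intact since $X_{1}\wedge\left(X_{2}-hX_{1}\right)=X_{1}\wedge X_{2}$. The key elementary fact is that if $X_{1}^{\left(\mathfrak{s}\left(S\right)\right)}=A\partial_{x}+B\partial_{y}$ is non-dicritical, i.e. $yA-xB\not\equiv0$, then $\ell\cdot X_{1}^{\left(\mathfrak{s}\left(S\right)\right)}$ is non-dicritical for every nonzero form $\ell$, because $yA-xB$ is not a zero divisor. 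Choosing $h$ to be a nonzero constant (when $\nu\left(X_{2}\right)=\nu\left(X_{1}\right)$) or with a prescribed linear leading term (when $\nu\left(X_{2}\right)=\nu\left(X_{1}\right)+1$) then moves the homogeneous leading part of $X_{2}$ off the linear subspace of dicritical fields without lowering its valuation, delivering $\left(\mathfrak{E}\right)$ and $\left(\mathfrak{O}\right)$.

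The final assertion is handled by the same construction: starting from an arbitrary generic Saito basis $\left\{X_{1},X_{2}\right\}$, where genericity of $X_{1}$ makes it non-dicritical if and only if $S$ is non-radial, one maximizes $\nu\left(X_{2}-hX_{1}\right)$ over holomorphic $h$, which by \eqref{inegalite.fondamentale.saito} and Lemma \ref{lem:0or-1}(3) lands in one of the two admissible valuations, and then applies the constant or linear correction above. All these operations have the permitted shape $X_{2}\mapsto X_{2}-hX_{1}$, so the normalized basis is one of the six listed types. I expect the main obstacle to be the bookkeeping guaranteeing non-diciriticality of $X_{2}$ in the cases $\left(\mathfrak{E}\right)$ and $\left(\mathfrak{O}\right)$: one must verify that the correction by $-hX_{1}$ escapes the dicritical locus \emph{without} dropping $\nu\left(X_{2}\right)$, which is precisely where the non-zero-divisor property of $yA-xB$ is essential. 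The exclusion of $\mathfrak{s}\left(S\right)=\nu\left(S\right)/2$ in the even radial case, via the clash between radiality and Lemma \ref{lem:0or-1}(2), is the other delicate point.
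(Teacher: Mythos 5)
Your scaffolding (the parity-times-radial-type case analysis, pinning down $\mathfrak{s}\left(S\right)$ via Theorem \ref{thm:LowerGenericBound}, and the exclusion of $\mathfrak{s}\left(S\right)=\nu\left(S\right)/2$ in the even radial case by playing radiality against Lemma \ref{lem:0or-1}(2)) is sound, but two steps do not hold up. First, Lemma \ref{lem:0or-1}(3) is an existence statement: the paper proves it, when $\nu\left(S\right)$ is even, by adjoining a generic line and applying the division process, which \emph{replaces} $X_{1}$ by a different vector field. It is therefore not a property of every Saito basis, and in particular not of the basis you obtain by completing your chosen optimal $X_{1}$. Concretely, let $S$ be the union of four generic lines, $f$ a reduced equation, $X_{1}=x\partial_{x}+y\partial_{y}$ and $\sharp df=\partial_{x}f\partial_{y}-\partial_{y}f\partial_{x}$; then $\left\{ X_{1},X_{1}+\sharp df\right\} $ is a Saito basis (Euler's identity gives $X_{1}\wedge\left(X_{1}+\sharp df\right)=4f$) containing the optimal field $X_{1}$, with $\nu\left(X_{1}\right)+\nu\left(X_{2}\right)=2=\nu\left(S\right)-2$, so your claimed confinement of $\nu\left(X_{2}\right)$ to two consecutive values fails for the basis you are actually working with.

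Second, and more seriously, your correction $X_{2}\mapsto X_{2}-hX_{1}$ does not dispose of the hardest case, which is precisely where the paper spends most of its proof: $\nu\left(S\right)$ odd, $X_{1}$ non-dicritical, and $\nu\left(X_{2}\right)=\frac{\nu\left(S\right)-1}{2}$. Here the sum equals $\nu\left(S\right)-1$, so none of your inequalities excludes it, yet the configuration is not of type $\left(\mathfrak{O}\right)$. Your constant correction keeps $\nu\left(X_{2}-hX_{1}\right)=\frac{\nu\left(S\right)-1}{2}$ and at best makes it non-dicritical, producing two non-dicritical fields of equal valuation, which is not a listed type; while raising the valuation to $\frac{\nu\left(S\right)+1}{2}$ would require the initial part of $X_{2}$ to be a \emph{constant} multiple of that of $X_{1}$. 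But the vanishing of the wedge of the initial parts only yields $X_{i}^{\left(\nu\left(X_{i}\right)\right)}=\delta_{i}X_{0}$ with $\delta_{1},\delta_{2}$ coprime homogeneous polynomials of the same, possibly positive, degree, and in that situation no choice of $h$ changes the type. The paper resolves exactly this case by adjoining a generic smooth curve $l$ (so that $\nu\left(S\cup l\right)$ is even), classifying the adapted basis of $S\cup l$ by the even case already established, and transporting it back through the division process of Proposition \ref{division.saito.process}, the sub-cases $\left(\mathfrak{E}_{d}\right)$ and $\left(\mathfrak{E}_{d}^{\prime}\right)$ being eliminated by contradiction with the existence of your non-dicritical $X_{1}$. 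Without this argument, or a substitute for it, the proposal does not prove the theorem; and the same difficulty recurs in your even radial case, where the initial parts $R_{1}\left(x\partial_{x}+y\partial_{y}\right)$ and $R_{2}\left(x\partial_{x}+y\partial_{y}\right)$ need not satisfy $R_{1}\mid R_{2}$, so maximizing $\nu\left(X_{2}-hX_{1}\right)$ over $h$ need not reach the range of valuations you require.
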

If the Saito basis of $S$ has one of the forms given by Theorem \ref{SaitoBasisForm},
we will say that the basis is \emph{adapted.}
\begin{rem}
Notice that if the Saito basis $\left\{ X_{1},X_{2}\right\} $ of
$S$ is of type $\left(\mathfrak{E}_{d}^{\prime}\right)$ or $\left(\mathfrak{O}_{d}^{\prime}\right)$
then for any function $L$ with a non trivial linear part, the family
\[
\left\{ X_{1},X_{2}+LX_{1}\right\} ,\quad\left\{ \begin{array}{c}
L\left(0\right)\neq0\textup{ if \ensuremath{S} is of type \ensuremath{\left(\mathfrak{O}_{d}^{\prime}\right)}}\\
L\left(0\right)=0\textup{ if \ensuremath{S} is of type \ensuremath{\left(\mathfrak{E}_{d}^{\prime}\right)}}
\end{array}\right.
\]
 is a Saito basis for $S$ of type $\left(\mathfrak{E}_{d}\right)$
or $\left(\mathfrak{O}_{d}\right)$. In some sense, the bases of type
$\left(\mathfrak{E}_{d}^{\prime}\right)$ or $\left(\mathfrak{O}_{d}^{\prime}\right)$
are exceptional among the one of type $\left(\mathfrak{E}_{d}\right)$
or $\left(\mathfrak{O}_{d}\right)$.
\end{rem}
\begin{rem}
\label{rem:EDprime}The curves of type $\left(\mathfrak{E}_{d}^{\prime}\right)$
are the only curves for which there exists a Saito basis $\left\{ X_{1},X_{2}\right\} $
with 
\[
\left|\nu\left(X_{1}\right)-\nu\left(X_{2}\right)\right|\geq2
\]
\end{rem}
\begin{rem}
\textcolor{red}{\label{relevement.base.adaptee} }\textcolor{black}{One
of the interest of adapted Saito bases is their behaviour with respect
to the blowing-up. For instance, suppose that $S$ has an adapted
Saito basis $\left\{ X_{1},X_{2}\right\} $ of type $\left(\mathfrak{E}_{d}\right).$
Then, blowing-up the Saito criterion (\ref{Saito.fondamental.wedge})
yields the relation
\[
X_{1}^{E}\wedge X_{2}^{E}=u\circ E\frac{f\circ E}{x_{1}^{\nu\left(S\right)}}.
\]
Therefore, according to the Saito criterion, the family $\left\{ \left(X_{1}^{E}\right)_{c},\left(X_{2}^{E}\right)_{c}\right\} $
is a Saito basis for $\left(S^{E}\right)_{c}$ for any $c\in D$ -
but not necessarly }\textcolor{black}{\emph{adapted}}\textcolor{black}{.
It is a simple matter to check that the latter proprety holds for
any above type of Saito bases.}
\end{rem}
\begin{proof}[Proof of Theorem \ref{SaitoBasisForm}]
Let us consider a Saito basis $\left\{ X_{1},X_{2}\right\} $ of
$S$ and suppose that $\nu\left(X_{1}\right)\leq\nu\left(X_{2}\right).$
\begin{casenv}
\item Suppose first $\nu\left(S\right)$ even. If $X_{1}$ is not dicritical
then according to Theorem \ref{thm:LowerGenericBound} and (\ref{inegalite.fondamentale.saito}),
$\nu\left(X_{1}\right)=\nu\left(X_{2}\right)=\frac{\nu\left(S\right)}{2}$.
Considering if necessary $X_{2}+\alpha X_{1}$ for a generic value
$\alpha\in\mathbb{C}$, one has 
\[
\left(\mathfrak{E}\right)\qquad\begin{array}{l}
\nu\left(X_{1}\right)=\nu\left(X_{2}\right)=\frac{\nu\left(S\right)}{2}\\
X_{1}\text{ and }X_{2}\textup{ are non-dicritical.}
\end{array}
\]
Assume $X_{1}$ is dicritical. If $\nu\left(X_{1}\right)=\frac{\nu\left(S\right)}{2}$
then Lemma \ref{lem:0or-1} ensures that $X_{2}$ is not dicritical
and the Saito basis $\left\{ X_{1}+X_{2},X_{2}\right\} $ is of type
$\left(\mathfrak{E}\right).$ If $\nu\left(X_{1}\right)=\frac{\nu\left(S\right)}{2}-1$,
following Lemma \ref{lem:0or-1}, one can suppose that
\[
\nu\left(X_{2}\right)=\frac{\nu\left(S\right)}{2}\textup{ or }\frac{\nu\left(S\right)}{2}+1.
\]
If $\nu\left(X_{2}\right)=\frac{\nu\left(S\right)}{2}+1$ then $X_{2}$
is not dicritical. Thus, one has a basis of the form
\[
\left(\mathfrak{E}_{d}^{\prime}\right)\qquad\begin{array}{l}
\nu\left(X_{1}\right)=\nu\left(X_{2}\right)-2=\frac{\nu\left(S\right)}{2}-1\\
X_{1}\text{ is dicritical but not }X_{2}.
\end{array}
\]
If $\nu\left(X_{2}\right)=\frac{\nu\left(S\right)}{2}$, then $X_{2}$
is dicritical, and thus 
\[
\left(\mathfrak{E}_{d}\right)\qquad\begin{array}{l}
\nu\left(X_{1}\right)=\nu\left(X_{2}\right)-1=\frac{\nu\left(S\right)}{2}-1\\
X_{1}\text{ and }X_{2}\textup{ are both dicritical.}
\end{array}
\]
\item Suppose now $\nu\left(S\right)$ odd. In any case, $\nu\left(X_{1}\right)=\frac{\nu\left(S\right)-1}{2}.$
Suppose $X_{1}$ dicritical. If $\nu\left(X_{2}\right)=\frac{\nu\left(S\right)-1}{2}$
then $X_{2}$ is dicritical, and thus 
\[
\left(\mathfrak{O}_{d}\right)\qquad\begin{array}{l}
\nu\left(X_{1}\right)=\nu\left(X_{2}\right)=\frac{\nu\left(S\right)-1}{2}\\
X_{1}\text{ and }X_{2}\textup{ are dicritical.}
\end{array}
\]
If $\nu\left(X_{2}\right)=\frac{\nu\left(S\right)+1}{2}$ then $X_{2}$
is not dicritical, and therefore the basis satifies
\[
\left(\mathfrak{O}_{d}^{\prime}\right)\qquad\begin{array}{l}
\nu\left(X_{1}\right)=\nu\left(X_{2}\right)-1=\frac{\nu\left(S\right)-1}{2}\\
X_{1}\text{ is dicritical and }X_{2}\textup{ is non-dicritical.}
\end{array}
\]
Finally, suppose that $X_{1}$ is not dicritical. If $\nu\left(X_{2}\right)=\frac{\nu\left(S\right)+1}{2}$
then the basis satifies 
\[
\left(\mathfrak{O}\right)\qquad\begin{array}{l}
\nu\left(X_{1}\right)=\nu\left(X_{2}\right)-1=\frac{\nu\left(S\right)-1}{2}\\
X_{1}\text{ and }X_{2}\textup{ are non-dicritical.}
\end{array}
\]
It remains to study the case in which $X_{1}$ is not dicritical and
$\nu\left(X_{2}\right)=\frac{\nu\left(S\right)-1}{2}$. To do so,
consider a generic line $l$. The multiplicity of $S\cup l$ is even,
thus we can apply the results above to reach the description of the
possible bases for $S$.
\begin{enumerate}
\item Suppose first that the Saito basis $\left\{ X_{1}^{l},X_{2}^{l}\right\} $
of $S\cup l$ has the form $\left(\mathfrak{E}\right)$ ;
\[
\nu\left(X_{1}^{l}\right)=\nu\left(X_{2}^{l}\right)=\frac{\nu\left(S\right)+1}{2},
\]
none of these vector fields being dicritical. Let us consider some
coordinates in which $l=\left\{ x=0\right\} $ and let us write
\[
X_{i}^{l}=xA_{i}\partial_{x}+\left(y^{\alpha_{i}}b_{i}\left(y\right)+xB_{i}\right)\partial_{y}
\]
with $b_{i}\left(0\right)\neq0.$ By symmetry, one can suppose $\alpha_{1}\leq\alpha_{2}.$
Thus, the family 
\[
\left\{ X_{1}^{l},\overline{X}_{2}^{l}=\frac{1}{x}\left(X_{2}-y^{\alpha_{2}-\alpha_{1}}\frac{b_{2}}{b_{1}}X_{1}\right)\right\} 
\]
is a Saito basis for $S$ such that 
\[
\nu\left(X_{1}^{l}\right)=\frac{\nu\left(S\right)+1}{2}\textup{ and }\nu\left(\overline{X_{2}}^{l}\right)=\frac{\nu\left(S\right)-1}{2}.
\]
$\overline{X_{2}}^{l}$ has to be not dicritical since $X_{1}$ is
not dicritical. Therefore, $S$ admits a basis of the form $\left(\mathfrak{O}\right).$
\item Suppose that the Saito basis $\left\{ X_{1}^{l},X_{2}^{l}\right\} $
of $S\cup l$ has the form $\left(\mathfrak{E}_{d}\right)$
\[
\nu\left(X_{1}^{l}\right)=\nu\left(X_{2}^{l}\right)-1=\frac{\nu\left(S\right)+1}{2}-1
\]
both vector fields being dicritical. As before, let us consider some
coordinates in which $l=\left\{ x=0\right\} $ and let us written
\[
X_{i}^{l}=xA_{i}\partial_{x}+\left(y^{\alpha_{i}}b_{i}\left(y\right)+xB_{i}\right)\partial_{y}
\]
with $b_{i}\left(0\right)\neq0.$ If $\alpha_{1}\leq\alpha_{2}$ then
the induced Saito basis $\left\{ X_{1}^{l},\overline{X}_{2}^{l}\right\} $
for $S$ satisfies $\nu\left(X_{1}^{l}\right)=\frac{\nu\left(S\right)-1}{2}.$
Therefore, 
\[
\nu\left(\overline{X_{2}}^{l}\right)=\frac{\nu\left(S\right)-1}{2}\textup{ or }\frac{\nu\left(S\right)+1}{2}.
\]
In any case of the alternative above, there is no non dicritical vector
fields of multiplicity $\frac{\nu\left(S\right)-1}{2}$ in the Saito
module of $S,$ which is a contradiction with the property of $X_{1}.$
Therefore, $\alpha_{1}>\alpha_{2}$ . In the induced basis $\left\{ \overline{X}_{1}^{l},X_{2}^{l}\right\} $,
the vector field $\overline{X}_{1}^{l}$ is write
\[
\overline{X}_{1}^{l}=\frac{1}{x}\left(X_{1}^{l}-y^{\alpha_{1}-\alpha_{2}}\frac{b_{1}\left(y\right)}{b_{2}\left(y\right)}X_{2}^{l}\right).
\]
Therefore, $\overline{X}_{1}^{l}$ is dicritical since $\nu\left(X_{2}^{l}\right)>\nu\left(X_{1}^{l}\right).$
But since $X_{1}$ is not dicritical, it is a contradiction.
\item Finally, suppose that the Saito basis of $S\cup l$ has the form $\left(\mathfrak{E}_{d}^{\prime}\right)$
\[
\nu\left(X_{1}^{l}\right)=\nu\left(X_{2}^{l}\right)-2=\frac{\nu\left(S\right)+1}{2}-1
\]
with $X_{1}^{l}$ dicritical and $X_{2}^{l}$ not dicritical. Then
for any linear function $L$ the Saito basis for $S\cup l$ 
\[
\left\{ X_{1}^{l},X_{2}^{l}+LX_{1}^{l}\right\} 
\]
is of type $\left(\mathfrak{E}_{d}\right)$ and we are reduced to
the previous case.
\end{enumerate}
\end{casenv}
\end{proof}
\begin{table}[h]
\begin{tabular}{ccccc}
\addlinespace
{\small{}$S$} & {\scriptsize{}$f=x$} & {\scriptsize{}$f=xy$} & {\scriptsize{}$f=xy\left(x+y\right)$} & {\scriptsize{}$f=xy\left(x^{2}-y^{2}\right)$}\tabularnewline\addlinespace
\midrule
\addlinespace
{\small{}$\nu\left(S\right)$} & {\scriptsize{}$1$} & {\scriptsize{}$2$} & {\scriptsize{}$3$} & {\scriptsize{}$4$}\tabularnewline\addlinespace
\midrule
\addlinespace
{\small{}$X_{1},\ X_{2}$} & {\scriptsize{}$\partial_{x},\ x\partial_{y}$} & {\scriptsize{}$x\partial_{x},\ y\partial_{y}$} & {\scriptsize{}$x\partial_{x}+y\partial_{y},~\sharp f$} & {\scriptsize{}$x\partial_{x}+y\partial_{y},~\sharp f$}\tabularnewline\addlinespace
\midrule
\addlinespace
{\small{}$\nu\left(X_{1}\right),\ \nu\left(X_{2}\right)$} & {\scriptsize{}$0,\ 1$} & {\scriptsize{}$1,\ 1$} & {\scriptsize{}$1,\ 2$} & {\scriptsize{}$1,\ 3$}\tabularnewline\addlinespace
\midrule
\addlinespace
{\small{}Type} & {\scriptsize{}$\left(\mathfrak{O}\right)$} & {\scriptsize{}$\left(\mathfrak{E}\right)$} & {\scriptsize{}$\left(\mathfrak{O}_{d}^{\prime}\right)$} & {\scriptsize{}$\left(\mathfrak{E}_{d}^{\prime}\right)$}\tabularnewline\addlinespace
\end{tabular}

\begin{tabular}{ccc}
\noalign{\vskip\doublerulesep}
{\small{}$S$} & {\scriptsize{}$f=xy\left(x^{3}-y^{3}+\cdots\right)$} & {\scriptsize{}$f=xy\left(x^{2}-y^{2}\right)\left(x+2y+\cdots\right)\left(x+3y+\cdots\right)$}\tabularnewline[\doublerulesep]
\hline 
\noalign{\vskip\doublerulesep}
{\small{}$\nu\left(S\right)$} & {\scriptsize{}5} & {\scriptsize{}6}\tabularnewline[\doublerulesep]
\hline 
\noalign{\vskip\doublerulesep}
{\small{}$X_{1},\ X_{2}$} & {\scriptsize{}$\begin{array}{c}
x\left(x\partial_{x}+y\partial_{y}\right)+\cdots\\
y\left(x\partial_{x}+y\partial_{y}\right)+\cdots
\end{array}$} & {\scriptsize{}$\begin{array}{c}
\left(x+\frac{29}{15}y\right)\left(x\partial_{x}+y\partial_{y}\right)+\cdots\\
x^{2}\left(x\partial_{x}+y\partial_{y}\right)+\cdots
\end{array}$}\tabularnewline[\doublerulesep]
\hline 
\noalign{\vskip\doublerulesep}
{\small{}$\nu\left(X_{1}\right),\ \nu\left(X_{2}\right)$} & {\scriptsize{}$2,\ 2$} & {\scriptsize{}$2,\ 3$}\tabularnewline[\doublerulesep]
\hline 
\noalign{\vskip\doublerulesep}
{\small{}Type} & {\scriptsize{}$\left(\mathfrak{O}_{d}\right)$ - $1$ free point} & {\scriptsize{}$\left(\mathfrak{E}_{d}\right)$ - $1$ free point}\tabularnewline[\doublerulesep]
\end{tabular}\vspace{0.6cm}

\caption{\label{tab:Exemples-of-different}Examples of different types of Saito
bases.}
\end{table}

\subsection{Base of type $\left(\mathfrak{E}_{d}^{\prime}\right)$ and $\left(\mathfrak{O}_{d}^{\prime}\right)$.}

Beyond Example \ref{Example.5.droites}, the curve $S$ defined by
\[
S=\left\{ y^{5}+x^{5}+x^{6}=0\right\} 
\]
belongs to the generic component of the moduli space of five smooth
and transversal curves. An optimal vector field $X_{1}$ for $S$
can be written 
\[
X_{1}=\left(\frac{1}{5}xy-\frac{1}{25}x^{2}y+\frac{6}{125}x^{3}y+\frac{36}{125}x^{4}y\right)\partial_{x}+\left(\frac{1}{5}y^{2}+\frac{216}{625}x^{3}y^{2}\right)\partial_{y}
\]
whose initial part is
\begin{equation}
\frac{y}{5}\left(x\partial_{x}+y\partial_{y}\right).\label{initial.part}
\end{equation}
Thus $X_{1}$ is dicritical and of multiplicity $2$. However, after
one blowing-up, $X_{1}^{E}$ is not transverse to $D$ at every point
: indeed, following (\ref{initial.part}), it is tangent to $D$ at
the point corresponding to the direction $y=0.$ To formalize these
remarks, let us recall the following definition
\begin{defn}
Let $D$ be a divisor and $X$ a vector field defined in a neighborhood
of $D$ that does not leave invariant $D.$ The locus of tangency
between $X$ and $D$ is the common zeros of $F$ and $X\cdot F$
where $F$ is any local equation of $D.$ It is denoted by 
\[
\textup{Tan\ensuremath{\left(X,D\right)}.}
\]
\end{defn}
By definition, the locus of tangency between $D$ and $X$ contains
the singular points of $X$ which are on $D.$ In the example (\ref{initial.part}),
we have 
\[
\textup{Tan}\left(X_{1}^{E},D\right)=\left\{ \left(x_{1}=0,y_{1}=0\right)\right\} \neq\textup{Tan}\left(S^{E},D\right)=\emptyset.
\]
This leads us to introduce the following notion.
\begin{defn}
A curve $S$ of radial type is said to be of \emph{pure radial} \emph{type}
if for any optimal vector field $X_{1}$ the following equality holds
\[
\textup{Tan}\left(X_{1}^{E},D\right)=\textup{Tan}\left(S^{E},D\right).
\]
If $S$ is not pure radial, then the non empty set
\[
\textup{Tan}\left(X_{1}^{E},D\right)\setminus\textup{Tan}\left(S^{E},D\right)
\]
is called the set of \emph{free points of $X_{1}$}
\end{defn}
Notice that by construction of $X_{1}$, in any case, the inclusion
\[
\textup{Tan}\left(S^{E},D\right)\subset\textup{Tan}\left(X_{1}^{E},D\right)
\]
holds. The main feature of this definition relies on the fact that
it allows to state a characterization of the curves admitting a basis
of type $\left(\mathfrak{E}_{d}^{\prime}\right)$ or $\left(\mathfrak{O}_{d}^{\prime}\right)$.
\begin{thm}
\label{thm:pure.radial}The following properties are equivalent :
\begin{enumerate}
\item $S$ is of pure radial type.
\item $S$ admits a Saito basis of type $\left(\mathfrak{E}_{d}^{\prime}\right)$
or $\left(\mathfrak{O}_{d}^{\prime}\right)$.
\end{enumerate}
\end{thm}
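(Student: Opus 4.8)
The plan is to pass to the blow-up $E$ and read both conditions off the position of $X_1^E$ with respect to $D$. By Remark~\ref{relevement.base.adaptee}, for every dicritical type the blown-up pair is a Saito basis for the strict transform, namely
\[
X_1^E\wedge X_2^E=\left(u\circ E\right)f_1,\qquad f_1=\frac{f\circ E}{x_1^{\nu\left(S\right)}}.
\]
The primed types $\left(\mathfrak{E}_{d}^{\prime}\right)$, $\left(\mathfrak{O}_{d}^{\prime}\right)$ are precisely those with $X_2$ non dicritical, i.e. with $X_2^E$ leaving $D$ invariant, while the optimal field $X_1^E$, being dicritical, is transverse to $D$ off the finite set $\textup{Tan}\left(X_1^E,D\right)$. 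I would prove the sharper local assertion: \emph{for a fixed optimal dicritical $X_1$, $S$ has a primed Saito basis $\left\{ X_1,X_2\right\}$ if and only if $\textup{Tan}\left(X_1^E,D\right)=\textup{Tan}\left(S^E,D\right)$}, and then deduce the theorem.

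For $(2)\Rightarrow(1)$, suppose $\left\{ X_1,X_2\right\}$ is primed, so $X_2^E$ is everywhere tangent to $D$. Let $c\in\textup{Tan}\left(X_1^E,D\right)$ and assume $c$ were a free point, i.e. $S^E$ is not tangent to $D$ at $c$. In coordinates with $D=\left\{ x_1=0\right\}$ and $c$ at the origin, tangency of $X_1^E$ reads $P_1\left(0,0\right)=0$ with $P_1=X_1^E\cdot x_1$, whereas $D$-invariance of $X_2^E$ makes its $\partial_{x_1}$-component divisible by $x_1$. Evaluating $X_1^E\wedge X_2^E$ at $c$ then shows it vanishes strictly more than $f_1$: if $S^E$ avoids $c$ the wedge is zero although $f_1\left(c\right)\neq0$, and if $S^E$ crosses $D$ transversally at $c$ the wedge picks up an order-$2$ zero against the simple zero of $f_1$. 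This contradicts the displayed identity, so $\textup{Tan}\left(X_1^E,D\right)\subset\textup{Tan}\left(S^E,D\right)$; the reverse inclusion is automatic (noted after the definition of free points). Finally, since $\nu\left(X_2\right)>\nu\left(X_1\right)$ in both primed types, every optimal field $aX_1+bX_2$ has initial part $a\left(0\right)X_1^{\left(\nu\left(X_1\right)\right)}$, hence the same dicritical coefficient up to a scalar and the same tangency locus; thus $S$ is of pure radial type.

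For $(1)\Rightarrow(2)$, assume $S$ pure radial and fix an optimal $X_1$, so that $\textup{Tan}\left(X_1^E,D\right)=\textup{Tan}\left(S^E,D\right)$. I would complete $X_1^E$ into a Saito basis of $S^E$ by a field $X_2^E$ leaving $D$ invariant. Off $\textup{Tan}\left(X_1^E,D\right)$, $X_1^E$ is transverse to $D$ and any $D$-invariant field tangent to $S^E$ completes it. At $c\in\textup{Tan}\left(S^E,D\right)$ the tangency of $S^E$ to $D$ forces $f_1|_{D}$ to vanish to order $\geq2$ while $P_1|_{D}$ has a simple zero, so the quotient defining the $\partial_{y_1}$-component of $X_2^E$ stays holomorphic and one solves $X_1^E\wedge X_2^E=\left(\textup{unit}\right)f_1$ with $X_2^E$ tangent to $D$; it is exactly the absence of free points that keeps this quotient holomorphic. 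Blowing $X_2^E$ down produces a holomorphic non dicritical $X_2\in\textup{Der}\left(\log S\right)$, and descending the wedge gives $X_1\wedge X_2=\left(\textup{unit}\right)f$, so $\nu\left(X_1\right)+\nu\left(X_2\right)=\nu\left(S\right)$. By Lemma~\ref{lem:0or-1} the pair is of type $\left(\mathfrak{E}_{d}^{\prime}\right)$ or $\left(\mathfrak{O}_{d}^{\prime}\right)$ according to the parity of $\nu\left(S\right)$.

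The hard part is the \emph{global} completion in $(1)\Rightarrow(2)$. Two local $D$-invariant completions differ by $g\,X_1^E$ with $g\in\left(x_1\right)$ (since $X_1^E$ is not tangent to $D$), so gluing them over a neighbourhood of $D\cong\mathbb{P}^1$ meets a potential $H^1$-obstruction. The role of pure radiality is to suppress exactly the free-point contributions at which a $D$-invariant completion fails to exist, and I expect the obstruction to vanish by an argument parallel to the cohomological triviality of Lemma~\ref{lem:vanishing-map}, or by a global Saito division in the spirit of the proof of Theorem~\ref{Flat.saito.basis}. Handling this patching, together with the verification that the descended $X_2$ has the claimed multiplicity, is the principal difficulty.
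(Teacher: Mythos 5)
Your direction $(2)\Rightarrow(1)$ is correct and is essentially the paper's own argument: restrict the blown-up Saito identity $X_{1}^{E}\wedge X_{2}^{E}=\left(u\circ E\right)f_{1}$ to a point $c\in D\setminus\textup{Tan}\left(S^{E},D\right)$, split into the cases where $c$ lies off $S^{E}$ or where $S^{E}$ crosses $D$ transversally at $c$, and observe that tangency of $X_{1}^{E}$ at $c$ together with $D$-invariance of $X_{2}^{E}$ would force the wedge to vanish to higher order than $f_{1}$. You even make explicit a step the paper leaves implicit, namely that since $\nu\left(X_{2}\right)>\nu\left(X_{1}\right)$ in the primed types, every optimal field has initial part proportional to that of $X_{1}$, hence the same tangency locus, so the equality holds for \emph{all} optimal fields as the definition of pure radiality demands.

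The genuine gap is in $(1)\Rightarrow(2)$, and you have named it yourself without closing it. Expecting the gluing obstruction to vanish \emph{by an argument parallel to} Lemma \ref{lem:vanishing-map} is not a proof: that lemma concerns the operator $\mathfrak{B}$ with values in $\Omega^{2}\left(-\overline{n}D-S^{E}\right)$ and says nothing about the torsor of local $D$-invariant Saito completions of $X_{1}^{E}$; moreover that torsor is not governed by an additive sheaf (two local completions differ by a unit rescaling \emph{in addition to} a multiple $gX_{1}^{E}$, so the obstruction does not naively sit in $H^{1}$ of a line bundle on $D\simeq\mathbb{P}^{1}$), and even the local solvability at $c\in\textup{Tan}\left(S^{E},D\right)$ is asserted via vanishing orders ($P_{1}|_{D}$ having a \emph{simple} zero) that pure radiality, being a set-theoretic equality of tangency loci, does not give you. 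Finally, the blow-down step is not free: the valuation of the descended $X_{2}$ -- which is precisely what distinguishes $\left(\mathfrak{E}_{d}^{\prime}\right)$ and $\left(\mathfrak{O}_{d}^{\prime}\right)$ from the other types -- is controlled by the vanishing order of $X_{2}^{E}$ along $D$, which your construction never pins down, so writing $X_{1}\wedge X_{2}=\left(\textup{unit}\right)f$ downstairs presupposes the very bookkeeping at issue. The paper avoids all of this with a much more elementary route: for $\nu\left(S\right)$ odd, it writes both generators of an adapted basis as $X_{i}=h_{i}\left(x\partial_{x}+y\partial_{y}\right)+\cdots$ and applies pure radiality to every combination $\alpha X_{1}+\beta X_{2}$ (each optimal) to force the dicritical coefficients to be proportional, $h_{2}=uh_{1}$, whence $\left\{ X_{1},X_{2}-uX_{1}\right\} $ is of type $\left(\mathfrak{O}_{d}^{\prime}\right)$; for $\nu\left(S\right)$ even, it adjoins a generic smooth curve $l$ attached to a point of $\textup{Tan}\left(S^{E},D\right)$, rules out types $\left(\mathfrak{O}\right)$ and non-primed $\left(\mathfrak{O}_{d}\right)$ for $S\cup l$ (the latter by invoking the already-proved odd case of the theorem applied to $S\cup l$), and then uses the division process of Proposition \ref{division.saito.process} to turn a type $\left(\mathfrak{O}_{d}^{\prime}\right)$ basis of $S\cup l$ into a type $\left(\mathfrak{E}_{d}^{\prime}\right)$ basis of $S$. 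As written, your implication $(1)\Rightarrow(2)$ is not established; to salvage your strategy you would have to identify the sheaf actually carrying the gluing obstruction and prove its $H^{1}$ vanishes under pure radiality, together with the blow-down valuation count.
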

\begin{proof}
We begin by proving $\left(2\right)\Longrightarrow\left(1\right)$.
Assume that $S$ admits an adapted Saito basis of type $\left(\mathfrak{E}_{d}^{\prime}\right)$
or $\left(\mathfrak{O}_{d}^{\prime}\right)$. According to Remark
\ref{relevement.base.adaptee}, for any point $c\in D,$ the family
\[
\left\{ \left(X_{1}^{E}\right)_{c},\left(X_{2}^{E}\right)_{c}\right\} 
\]
is a Saito basis of the germ of curve $\left(S^{E}\right)_{c}$. Let
$c\in D\setminus\textup{Tan}\left(S^{E},D\right).$ Suppose first
that $c\notin\textup{Sing}\left(E^{-1}\left(S\right)\right)$. Then
following Remark \ref{relevement.base.adaptee}, the product $X_{1}^{E}\wedge X_{2}^{E}$
is a unity at $c.$ Now, $X_{1}$ is dicritical and $X_{2}$ is not.
Thus in local coordinates $\left(x,y\right)$ at $c$ in which $x=0$
is local equation of $D,$ we can write 
\begin{align*}
X_{1}^{E}\wedge X_{2}^{E} & =\left(u\partial x+v\partial y\right)\wedge\left(ax\partial x+b\partial y\right),\qquad u,v,a,b\in\mathbb{C}\left\{ x,y\right\} \\
 & =avx-bu.
\end{align*}
Therefore $u$ is a unity and $X_{1}^{E}$ is transverse to $D.$
Suppose now that $c\in\textup{Sing}\left(E^{-1}\left(S\right)\right).$
Since $c\in D\setminus\textup{Tan}\left(S^{E},D\right)$ then $S^{E}$
is regular and transverse to $D$. Now, considering local coordinates
$\left(x,y\right)$ in which $xy=0$ is a local equation of $E^{-1}\left(S\right)$,
we obtain 
\begin{align*}
X_{1}^{E}\wedge X_{2}^{E} & =\left(u\partial x+vy\partial y\right)\wedge\left(ax\partial x+by\partial y\right)\qquad u,v,a,b\in\mathbb{C}\left\{ x,y\right\} \\
 & =avxy-buy
\end{align*}
which has to be of the form $\left(\textup{unity}\right)\times y$
according to the criterion of Saito. Therefore, $u$ is a unity and
$X_{1}^{E}$ is still transverse to $D$, which completes the proof
of the equality 
\[
\textup{Tan}\left(X_{1}^{E},D\right)=\textup{Tan}\left(S^{E},D\right).
\]
We now proceed to the proof of $\left(1\right)\Longrightarrow\left(2\right)$.
Let $\left\{ X_{1},X_{2}\right\} $ be an adapted Saito basis for
$S.$ The curve $S$ being radial, let us write
\begin{equation}
X_{1}=h_{1}\left(x\partial x+y\partial y\right)+\cdots.\label{ecriture.de.X1}
\end{equation}
The hypothesis is equivalent to assume that the tangent cone of $h_{1}$
coincide with the locus of tangency $\textup{Tan}\left(S^{E},D\right)$
for any optimal vector field $X_{1}$. Assume first that $\nu\left(S\right)$
is odd. According to Proposition (\ref{SaitoBasisForm}), the valuation
of $X_{1}$ is 
\[
\nu\left(X_{1}\right)=\frac{\nu-1}{2}.
\]
If $X_{2}$ is not dicritical, then $\nu\left(X_{2}\right)=\frac{\nu+1}{2}$.
Therefore, the basis $\left\{ X_{1},X_{2}\right\} $ is of type $\left(\mathfrak{O}_{d}^{\prime}\right)$
and the proposition is proved. Assume $X_{2}$ is dicritical and $\nu\left(X_{2}\right)=\frac{\nu-1}{2}$.
As in (\ref{ecriture.de.X1}), we write 
\[
X_{2}=h_{2}\left(x\partial x+y\partial y\right)+\cdots
\]
and
\begin{align*}
h_{2} & =q_{2}\cdot\overline{h_{2}}
\end{align*}
where the tangent cone of $q_{2}$ does not meet $\textup{Tan\ensuremath{\left(S^{E},D\right)}. }$
For any value of $\alpha$ and $\beta$, the initial part of $\alpha X_{1}+\beta X_{2}$
is written 
\[
\left(\alpha h_{1}+\beta q_{2}\overline{h_{2}}\right)\left(x\partial_{x}+y\partial_{y}\right).
\]
The hypothesis ensures that the tangent cone of $\alpha h_{1}+\beta q_{2}\overline{h_{2}}$
is in $\textup{Tan}\left(S^{E},D\right)$. Since the tangent cone
of $h_{1}$ is in $\textup{Tan}\left(S^{E},D\right)$, it can be seen
that the function $q_{2}$ is constant and that there exists a constant
$u$ such that 
\[
h_{2}=uh_{1}
\]
Then the basis $\left\{ X_{1},X_{2}-uX_{1}\right\} $ is of type $\left(\mathfrak{O}_{d}^{\prime}\right).$

Assume finally that $\nu\left(S\right)$ is even and consider a smooth
curve $l$ which is attached to a point in $\textup{Tan}\left(S^{E},D\right)$
after on blowing-up. Let $\left\{ X_{1}^{l},X_{2}^{l}\right\} $ be
an adapted Saito basis for $S\cup l.$ Consider some coordinates in
which $l=\left\{ x=0\right\} $ and write 
\[
X_{i}^{l}=xa_{i}\partial_{x}+\left(y^{\alpha_{i}}b_{i}^{0}\left(y\right)+xb_{i}^{1}\right)\partial_{y}
\]
with $b_{i}\left(0\right)\neq0.$ Since $\nu\left(S\cup l\right)$
is odd, a few cases may occur :
\begin{enumerate}
\item Assume the basis is of type $\left(\mathfrak{O}\right)$. Then, we
can suppose that $\nu\left(X_{1}^{l}\right)=\nu\left(X_{2}^{l}\right)=\frac{\nu\left(S\right)}{2}$
and $\alpha_{1}=\alpha_{2}$. The family 
\[
\left\{ X_{1}^{l},\overline{X}_{2}^{l}=\frac{1}{x}\left(X_{2}^{l}-\frac{b_{2}^{0}}{b_{1}^{0}}X_{1}^{l}\right)\right\} 
\]
is a Saito basis for $S$ with 
\[
\nu\left(X_{1}^{l}\right)=\frac{\nu}{2}\qquad\textup{and}\qquad\nu\left(\overline{X}_{2}^{l}\right)\geq\frac{\nu}{2}-1.
\]
If $\nu\left(\overline{X}_{2}^{l}\right)\geq\frac{\nu}{2}$ then $\mathfrak{s}\left(S\right)\geq\frac{\nu\left(S\right)}{2}$
which is impossible. Thus $\nu\left(\overline{X}_{2}^{l}\right)=\frac{\nu}{2}-1$.
But then, following Lemma \ref{lem:0or-1} $\overline{X}_{2}^{l}$
cannot be dicritical which contradicts the radiality of $S.$ Finally,
the Saito basis of $S\cup l$ cannot be of type $\left(\mathfrak{O}\right).$
\item Assume it is of type $\left(\mathfrak{O}_{d}\right)$ but not of type
$\left(\mathfrak{O}_{d}^{\prime}\right)$. Applying Theorem \ref{thm:pure.radial}
to this case for which $\nu\left(S\cup l\right)$ is odd ensures that
$S\cup l$ is not pure radial. Thus up to some changes of basis, $\textup{Tan}\left(\left(X_{1}^{l}\right)^{E},S\cup l\right)$
and $\textup{Tan}\left(\left(X_{2}^{l}\right)^{E},S\cup l\right)$
contains some points out of $\textup{Tan}\left(\left(S\cup l\right)^{E},D\right)=\textup{Tan}\left(S^{E},D\right).$
Therefore, we obtain a Saito basis for $S$ of the form 
\[
\left\{ X_{1}^{l},\overline{X}_{2}^{l}\right\} 
\]
where the tangent cone of $\overline{X}_{2}^{l}$ is not contained
in $\textup{Tan}\left(S^{E},D\right)$, which contradicts the assumption
of $S$ being pure radial. Therefore, the Saito basis of $S\cup l$
cannot be of type $\left(\mathfrak{O}_{d}\right)$ but not of type
$\left(\mathfrak{O}_{d}^{\prime}\right)$.
\item Finally, $S\cup l$ admits a Saito basis $\left\{ X_{1}^{l},X_{2}^{l}\right\} $
of type $\left(\mathfrak{O}_{d}^{\prime}\right)$ with $\nu\left(X_{1}^{l}\right)=\nu\left(X_{2}^{l}\right)-1=\frac{\nu}{2}.$
If $\alpha_{2}>\alpha_{1}$ then 
\[
\left\{ X_{1}^{l},\overline{X}_{2}^{l}=\frac{1}{x}\left(X_{2}^{l}-y^{\alpha_{2}-\alpha_{1}}\frac{b_{2}^{0}}{b_{1}^{0}}X_{1}^{l}\right)\right\} 
\]
is a Saito basis for $S$ with 
\[
\nu\left(X_{1}^{l}\right)=\frac{\nu}{2}\qquad\textup{and}\qquad\nu\left(\overline{X}_{2}^{l}\right)\geq\frac{\nu}{2}
\]
which is impossible. Thus $\alpha_{2}\leq\alpha_{1}$ and 
\[
\left\{ \overline{X}_{1}^{l}=\frac{1}{x}\left(X_{1}^{l}-y^{\alpha_{1}-\alpha_{2}}\frac{b_{1}^{0}}{b_{2}^{0}}X_{2}^{l}\right),X_{2}^{l}\right\} 
\]
is a Saito basis for $S$ of type $\left(\mathfrak{E}_{d}^{\prime}\right)$.
\end{enumerate}
\end{proof}
Finally, from the proof above we deduce the following
\begin{lem}
\label{Lemme.O.E}If $S$ is of type $\left(\mathfrak{O}\right)$
then $S\cup l$ is of type $\left(\mathfrak{E}\right).$ If $S$ is
of type $\left(\mathfrak{E}_{d}\right)$ or $\left(\mathfrak{E}_{d}^{\prime}\right)$
then $S\cup l$ is of type $\left(\mathfrak{O}_{d}\right)$ or $\left(\mathfrak{O}_{d}^{\prime}\right)$.
\end{lem}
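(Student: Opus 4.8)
The plan is to produce the asserted Saito bases of $S\cup l$ explicitly, by running the reversible construction of (\ref{ajouter.une.courbe}) on an \emph{adapted} basis $\left\{ X_{1},X_{2}\right\}$ of $S$ and then deciding the type from the initial homogeneous parts of the two generators produced. I fix a generic smooth curve $l=\left\{ L=0\right\}$, coordinates in which $L=x$, and a parametrization $\gamma$ of $l$. Putting $\rho=\frac{X_{2}\left(\gamma\right)\wedge\gamma'}{X_{1}\left(\gamma\right)\wedge\gamma'}$, the field $X_{2}-\rho X_{1}$ is tangent to $l$, and the determinant identity $\left(X_{2}-\rho X_{1}\right)\wedge LX_{1}=-uLf$ shows, through the Saito criterion, that $\left\{ X_{2}-\rho X_{1},\,LX_{1}\right\}$ is a Saito basis of $S\cup l$. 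The only difference with (\ref{ajouter.une.courbe}) is that for an adapted basis the valuations $\nu\left(X_{1}\right),\nu\left(X_{2}\right)$ differ by $1$, so $\rho$ vanishes at the origin rather than being a unit; this forces us to multiply the smaller–valuation field $X_{1}$ by $L$ (the symmetric choice would introduce a pole), but is otherwise harmless.

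For the first assertion take $\left\{ X_{1},X_{2}\right\}$ of type $\left(\mathfrak{O}\right)$, so $\nu\left(X_{1}\right)=m$, $\nu\left(X_{2}\right)=m+1$ with $m=\frac{\nu\left(S\right)-1}{2}$, both non dicritical. Since both fields are non dicritical, $X_{i}\left(\gamma\right)\wedge\gamma'$ has $t$–valuation $\nu\left(X_{i}\right)$, hence $\rho$ vanishes to order exactly $1$ and $LX_{1}$ has valuation $m+1=\frac{\nu\left(S\cup l\right)}{2}$; the fundamental inequality (\ref{inegalite.fondamentale.saito}) then forces $\nu\left(X_{2}-\rho X_{1}\right)=m+1$ as well. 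It remains to see that both generators are non dicritical. The field $LX_{1}=xX_{1}$ has initial part $x\,X_{1}^{\left(m\right)}$, which is radial only if $X_{1}^{\left(m\right)}$ is, so it is not dicritical; and $X_{2}-\rho X_{1}$ has initial part $X_{2}^{\left(m+1\right)}-c\,x\,X_{1}^{\left(m\right)}$ (with $\rho=cx+\cdots$, $c\neq0$), whose non radial component is an affine, non constant function of $c$. After replacing $X_{2}-\rho X_{1}$ by $X_{2}-\rho X_{1}+\lambda LX_{1}$ for a generic constant $\lambda$ — which merely translates $c$ — this initial part is non dicritical. Thus $\mathfrak{s}\left(S\cup l\right)=\frac{\nu\left(S\cup l\right)}{2}$ is realised by a non dicritical field, so $S\cup l$ is not of radial type and its basis is of type $\left(\mathfrak{E}\right)$.

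For the second assertion I may, by the remark following Theorem \ref{SaitoBasisForm}, assume $\left\{ X_{1},X_{2}\right\}$ is of type $\left(\mathfrak{E}_{d}\right)$, since a curve of type $\left(\mathfrak{E}_{d}^{\prime}\right)$ also carries such a basis; thus $\nu\left(X_{1}\right)=m-1$, $\nu\left(X_{2}\right)=m$ with $m=\frac{\nu\left(S\right)}{2}$, both dicritical. Now $X_{i}\left(\gamma\right)\wedge\gamma'$ has $t$–valuation $\nu\left(X_{i}\right)+1$, so $\rho$ again vanishes to order $1$ and, by (\ref{inegalite.fondamentale.saito}), both $X_{2}-\rho X_{1}$ and $LX_{1}$ have valuation $m=\frac{\nu\left(S\cup l\right)-1}{2}$ for generic $l$. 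Writing the dicritical initial parts as $X_{1}^{\left(m-1\right)}=R_{1}\left(x\partial_{x}+y\partial_{y}\right)$ and $X_{2}^{\left(m\right)}=R_{2}\left(x\partial_{x}+y\partial_{y}\right)$, the initial parts of $LX_{1}$ and of $X_{2}-\rho X_{1}$ are $xR_{1}\left(x\partial_{x}+y\partial_{y}\right)$ and $\left(R_{2}-c\,xR_{1}\right)\left(x\partial_{x}+y\partial_{y}\right)$, both radial; hence both generators remain dicritical. So $S\cup l$ carries a radial basis with both valuations equal to $\frac{\nu\left(S\cup l\right)-1}{2}$, that is, a basis of type $\left(\mathfrak{O}_{d}\right)$, refined to $\left(\mathfrak{O}_{d}^{\prime}\right)$ exactly when $S\cup l$ is pure radial by Theorem \ref{thm:pure.radial}. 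This is the second assertion.

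The algebra of the construction is routine; the genuine content is the control of dicriticalness, since the valuation count alone only fixes the numerical type and cannot separate $\left(\mathfrak{E}\right)$ from $\left(\mathfrak{E}_{d}\right)$ nor $\left(\mathfrak{O}_{d}\right)$ from $\left(\mathfrak{O}\right)$. The delicate step is the first assertion, where one must certify that the combination $X_{2}-\rho X_{1}$ can be kept non dicritical; this is exactly where the freedom of adding $\lambda LX_{1}$ and the genericity of $l$ enter, and it is the reverse of the case analysis already carried out in the proof of Theorem \ref{thm:pure.radial}.
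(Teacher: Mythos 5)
Your route is genuinely different from the paper's. The paper obtains this lemma by elimination: it is extracted from the case analyses already carried out in the proofs of Theorem \ref{SaitoBasisForm} and Theorem \ref{thm:pure.radial}, where one assumes $S\cup l$ has one of the forbidden types, divides the basis by $L$, and contradicts the hypothesis on $S$. You instead construct a Saito basis of $S\cup l$ directly from an adapted basis of $S$ by the reverse process (\ref{ajouter.une.courbe}) and read the type off the initial parts; this is more explicit and, when it works, also exhibits the basis. Your odd case is essentially sound, up to a bookkeeping slip: if $L=x$ and $\gamma$ parametrizes $l=\left\{ x=0\right\} $, then $\rho$ is a function of $y$, not of $x$, so adding $\lambda LX_{1}$ does not literally ``translate $c$''. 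The correct justification is the one your own argument suggests: if $X_{2}-\rho X_{1}+\lambda LX_{1}$ had radial initial part for two distinct values of $\lambda$, the difference would force $xX_{1}^{\left(m\right)}$, hence $X_{1}^{\left(m\right)}$, to be radial, contradicting the non-dicriticality of $X_{1}$; so at most one $\lambda$ is bad, which is all you need.

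The genuine gap is in the even case. You claim that (\ref{inegalite.fondamentale.saito}) forces $\nu\left(X_{2}-\rho X_{1}\right)=m$. It does not: since $\nu\left(LX_{1}\right)=m$ and $\nu\left(S\cup l\right)=2m+1$, the inequality also allows $\nu\left(X_{2}-\rho X_{1}\right)=m+1$. Concretely, writing $X_{1}^{\left(m-1\right)}=R_{1}\left(x\partial_{x}+y\partial_{y}\right)$, $X_{2}^{\left(m\right)}=R_{2}\left(x\partial_{x}+y\partial_{y}\right)$ and $\ell_{\rho}$ for the linear part of the chosen extension of $\rho$, the degree-$m$ part of $X_{2}-\rho X_{1}$ is $\left(R_{2}-\ell_{\rho}R_{1}\right)\left(x\partial_{x}+y\partial_{y}\right)$, which vanishes exactly when $R_{2}=\ell_{\rho}R_{1}$. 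This is not a vacuous worry: when $S$ is of type $\left(\mathfrak{E}_{d}^{\prime}\right)$, your own reduction to type $\left(\mathfrak{E}_{d}\right)$ (replacing $X_{2}$ by $X_{2}+LX_{1}$ with $L\left(0\right)=0$) manufactures precisely a basis in which $R_{2}$ is a linear form times $R_{1}$, and your appeal to genericity of $l$ does nothing to exclude the cancellation. The conclusion of the lemma survives, and the repair is one line, in either of two flavours: (i) as in your odd case, replace $X_{2}-\rho X_{1}$ by $X_{2}-\rho X_{1}+\lambda LX_{1}$ for generic $\lambda$; two bad values of $\lambda$ would give $L^{\left(1\right)}R_{1}\equiv0$, which is impossible, so for generic $\lambda$ the degree-$m$ part is a nonzero multiple of $x\partial_{x}+y\partial_{y}$ and the basis is of type $\left(\mathfrak{O}_{d}\right)$; or (ii) accept the jump: if $\nu\left(X_{2}-\rho X_{1}\right)=m+1$, the valuations sum to $\nu\left(S\cup l\right)$ and $LX_{1}$ is dicritical, so Lemma \ref{lem:0or-1} (2) makes $X_{2}-\rho X_{1}$ non dicritical and the basis is of type $\left(\mathfrak{O}_{d}^{\prime}\right)$, still within the statement. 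As written, however, your even case rests on a false intermediate claim and silently skips exactly the configuration that produces $\left(\mathfrak{O}_{d}^{\prime}\right)$.
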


\subsection{Cohomology of $\Theta_{S}$}

As we will explain in the next section, the cohomology of the sheaf
$\Theta_{S}$ computes the generic dimension of $\moduli$. The associated
formula depends on the type of Saito basis of $S.$
\begin{prop}
\label{dimension.coho}The dimension of the cohomology group $H^{1}\left(D,\Theta_{S}\right)$
can be obtained from the multiplicities of an adapted Saito basis
of $S$ the following way
\begin{enumerate}
\item If $\nu\left(X_{1}\right)+\nu\left(X_{2}\right)=\nu\left(S\right)$
then 
\[
\dim H^{1}\left(D,\Theta_{S}\right)=\frac{\left(\nu_{1}-1\right)\left(\nu_{1}-2\right)}{2}+\frac{\left(\nu_{2}-1\right)\left(\nu_{2}-2\right)}{2}
\]
\item If $\nu\left(X_{1}\right)+\nu\left(X_{2}\right)=\nu\left(S\right)-1$
then 
\[
\dim H^{1}\left(D,\Theta_{S}\right)=\frac{\left(\nu_{1}-1\right)\left(\nu_{1}-2\right)}{2}+\frac{\left(\nu_{2}-1\right)\left(\nu_{2}-2\right)}{2}+\nu\left(S\right)-2-\nu_{0}
\]
where $\text{\ensuremath{\nu_{i}}}=\nu\left(X_{i}\right),\ i=1,2$
and $\nu_{0}=\nu\left(\gcd\left(X_{1}^{\left(\nu\left(X_{1}\right)\right)},X_{2}^{\left(\nu\left(X_{2}\right)\right)}\right)\right).$
\end{enumerate}
\end{prop}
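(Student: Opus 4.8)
The plan is to read $H^{1}\left(D,\Theta_{S}\right)$ as the cohomology of the germ along the exceptional divisor $D\cong\mathbb{P}^{1}$ of the rank two locally free sheaf $\Theta_{S}=\Der\left(\log\left(S^{E}\cup D\right)\right)$, computed on the two charts $U_{1},U_{2}$ fixed at the beginning of this section, which form an acyclic covering for the coherent sheaves involved by Siu's theorem on Stein neighbourhoods, exactly as in Section 1. The whole computation rests on one model: for the line bundle $\mathcal{O}\left(kD\right)$ the exact sequence \[0\to\mathcal{O}\left(\left(k-1\right)D\right)\to\mathcal{O}\left(kD\right)\to\mathcal{O}_{\mathbb{P}^{1}}\left(-k\right)\to0,\] together with $\dim H^{1}\left(\mathbb{P}^{1},\mathcal{O}_{\mathbb{P}^{1}}\left(-k\right)\right)=k-1$, $\mathcal{O}\left(D\right)|_{D}=\mathcal{O}_{\mathbb{P}^{1}}\left(-1\right)$ and the vanishing of $H^{2}$ on the germ, gives by induction \[\dim H^{1}\left(D,\mathcal{O}\left(kD\right)\right)=\binom{k}{2}=\frac{k\left(k-1\right)}{2}.\] In particular $\dim H^{1}\left(D,\mathcal{O}\left(\left(\nu_{i}-1\right)D\right)\right)=\frac{\left(\nu_{i}-1\right)\left(\nu_{i}-2\right)}{2}$, which is the shape of the terms to be produced.

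Next I would lift the adapted Saito basis to $\mathcal{M}$. Blowing up the criterion of Saito $X_{1}\wedge X_{2}=uf$ and dividing each field by the maximal power of a local equation of $D$ gives, in the chart $U_{1}$, \[X_{1}^{E}\wedge X_{2}^{E}=\left(u\circ E\right)f_{1}\,x_{1}^{m},\qquad m=\nu\left(S\right)-\nu_{1}-\nu_{2}+1-d_{1}-d_{2},\] where $d_{i}=1$ if $X_{i}$ is dicritical and $0$ otherwise, and similarly with $f_{2},y_{2}$ in $U_{2}$. Running through Theorem \ref{SaitoBasisForm} one finds $m=1$ exactly for the non dicritical types $\left(\mathfrak{E}\right),\left(\mathfrak{O}\right)$ and $m=0$ otherwise; moreover the two hypotheses of the proposition correspond to $\nu_{1}+\nu_{2}=\nu\left(S\right)$ and $\nu_{1}+\nu_{2}=\nu\left(S\right)-1$. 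When $m=1$, $x_{1}f_{1}$ (resp. $y_{2}f_{2}$) is a reduced equation of $E^{-1}\left(S\right)=S^{E}\cup D$, so by the criterion of Saito $\left\{ X_{1}^{E},X_{2}^{E}\right\} $ freely generates $\Theta_{S}$ over each chart; when $m=0$ it freely generates $\Der\left(\log S^{E}\right)$ over each chart instead.

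The decisive observation is that the two chart frames do not glue to a trivialisation: the normalisation divides $E^{\star}X_{i}$ by $x_{1}^{k_{i}}$ in $U_{1}$ and by $y_{2}^{k_{i}}$ in $U_{2}$, with $k_{i}=\nu_{i}-1$ in the non dicritical case and $k_{i}=\nu_{i}$ in the dicritical one, so the two determinations differ on $U_{1}\cap U_{2}$ by $\left(x_{1}/y_{2}\right)^{k_{i}}=x_{2}^{k_{i}}$, that is, by the transition cocycle of $\mathcal{O}\left(k_{i}D\right)$. Hence the chart frames exhibit a splitting into line bundles. For the non dicritical types $\left(\mathfrak{E}\right),\left(\mathfrak{O}\right)$ this yields \[\Theta_{S}\cong\mathcal{O}\left(\left(\nu_{1}-1\right)D\right)\oplus\mathcal{O}\left(\left(\nu_{2}-1\right)D\right),\] whose determinant $\mathcal{O}\left(\left(\nu_{1}+\nu_{2}-2\right)D\right)=\mathcal{O}\left(\left(\nu\left(S\right)-2\right)D\right)$ matches $\det\Theta_{S}=K_{\mathcal{M}}^{-1}\left(-S^{E}-D\right)$ as a consistency check, and the model computation then gives the first formula. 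Since by Remark \ref{rem:EDprime} the types $\left(\mathfrak{E}_{d}^{\prime}\right),\left(\mathfrak{O}_{d}^{\prime}\right)$ (the only Case 1 types carrying a dicritical field) occur for pure radial curves, which by Theorem \ref{thm:pure.radial} also admit a basis of type $\left(\mathfrak{E}_{d}\right)$ or $\left(\mathfrak{O}_{d}\right)$, their value is delivered by the second formula applied to that other basis; a direct check (using $\nu_{0}=\nu_{1}$ for the basis produced in the Remark following Theorem \ref{SaitoBasisForm}) shows the two formulas agree, so no separate argument is needed.

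It remains to treat $\nu_{1}+\nu_{2}=\nu\left(S\right)-1$, i.e. the both dicritical types $\left(\mathfrak{E}_{d}\right),\left(\mathfrak{O}_{d}\right)$. Here the same splitting argument, applied now to the reduced equation of $S^{E}$, gives \[\Der\left(\log S^{E}\right)\cong\mathcal{O}\left(\nu_{1}D\right)\oplus\mathcal{O}\left(\nu_{2}D\right),\qquad\dim H^{1}\left(D,\Der\left(\log S^{E}\right)\right)=\binom{\nu_{1}}{2}+\binom{\nu_{2}}{2}.\] I would then descend to $\Theta_{S}$ through the short exact sequence \[0\to\Theta_{S}\to\Der\left(\log S^{E}\right)\to Q\to0,\] where $Q$, supported on $D$, records the $D$-transverse component of a field tangent to $S^{E}$. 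Because $\nu_{1}+\nu_{2}<\nu\left(S\right)$ forces $X_{1}^{\left(\nu_{1}\right)}\wedge X_{2}^{\left(\nu_{2}\right)}\equiv0$ by Lemma \ref{lem:0or-1}, the two initial parts are proportional with common factor of degree $\nu_{0}=\nu\left(\gcd\left(X_{1}^{\left(\nu_{1}\right)},X_{2}^{\left(\nu_{2}\right)}\right)\right)$; this $\nu_{0}$ counts the common tangencies of $X_{1}^{E},X_{2}^{E}$ with $D$ and hence controls the torsion of $Q$. The long exact sequence reduces the statement to the identity $\dim\operatorname{coker}\left[H^{0}\left(\Der\left(\log S^{E}\right)\right)\to H^{0}\left(Q\right)\right]-\dim H^{1}\left(Q\right)=1-\nu_{0}$, after which $\dim H^{1}\left(D,\Theta_{S}\right)=\binom{\nu_{1}}{2}+\binom{\nu_{2}}{2}+1-\nu_{0}$ rearranges into the second formula using $\nu_{1}+\nu_{2}=\nu\left(S\right)-1$. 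The hard part is precisely this computation of $Q$: one must analyse the inclusion locally at each singular point of $E^{-1}\left(S\right)$ and along the common tangency locus, and identify the colength of the image ideal generated by the dehomogenised cofactors with $\nu_{0}$; the global cohomological bookkeeping, once $Q$ is understood, is routine from the model computation.
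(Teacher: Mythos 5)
Your treatment of case (1) for the types $\left(\mathfrak{E}\right)$ and $\left(\mathfrak{O}\right)$ is correct and genuinely different from the paper, which simply cites \cite{YoyoBMS} for that formula: blowing up the Saito criterion shows that $\left\{ X_{1}^{E},X_{2}^{E}\right\} $ is a chart frame for $\Theta_{S}$ with \emph{diagonal} transition $\textup{diag}\left(x_{2}^{\nu_{1}-1},x_{2}^{\nu_{2}-1}\right)$, whence $\Theta_{S}\cong\mathcal{O}\left(\left(\nu_{1}-1\right)D\right)\oplus\mathcal{O}\left(\left(\nu_{2}-1\right)D\right)$, and your model computation $\dim H^{1}\left(D,\mathcal{O}\left(kD\right)\right)=\binom{k}{2}$ is right (I checked both the exponent bookkeeping $m=\nu\left(S\right)-\nu_{1}-\nu_{2}+1-d_{1}-d_{2}$ and the Cech count). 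The reduction of the types $\left(\mathfrak{E}_{d}^{\prime}\right),\left(\mathfrak{O}_{d}^{\prime}\right)$ to formula (2) applied to the modified basis $\left\{ X_{1},X_{2}+LX_{1}\right\} $, with $\nu_{0}=\nu_{1}$ there, is legitimate and the two formulas do agree numerically; but note that this makes your case (1) for those types logically dependent on case (2).

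Case (2) is where the proposal stops short of a proof, and it is precisely the part the paper actually proves. Your exact sequence $0\to\Theta_{S}\to\Der\left(\log S^{E}\right)\to Q\to0$ is the right object --- it is in fact what the paper's argument manipulates: on a chart, sections of $\Der\left(\log S^{E}\right)$ are $\phi_{1}X_{1}^{E}+\phi_{2}X_{2}^{E}$, and the paper's condition $\sum_{i}\phi_{i}^{1,0}\delta_{i}^{1}=0$ cutting out $\Theta_{S}$ is exactly the vanishing of the image in your $Q$. You also state the needed identity correctly: granted $\dim H^{1}\left(\Der\left(\log S^{E}\right)\right)=\binom{\nu_{1}}{2}+\binom{\nu_{2}}{2}$, the second formula is equivalent to $\dim\operatorname{coker}\left[H^{0}\left(\Der\left(\log S^{E}\right)\right)\to H^{0}\left(Q\right)\right]-\dim H^{1}\left(Q\right)=1-\nu_{0}$. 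But you then declare the determination of $Q$ ``the hard part'' and do not carry it out, and this step is not routine bookkeeping: it is the entire content of the proposition. Concretely, one must use that the initial parts factor as $X_{i}^{\left(\nu_{i}\right)}=\delta_{i}X_{0}$ with $\delta_{1},\delta_{2}$ coprime, parametrize the degree-zero layer of the kernel by a single function via $\phi_{1}^{0}=\dot{\phi}\,\delta_{2}$, $\phi_{2}^{0}=-\dot{\phi}\,\delta_{1}$, and then count obstructions: the paper's Laurent-expansion analysis shows this layer contributes exactly $\nu_{1}+\nu\left(\delta_{2}\right)-1=\nu\left(S\right)-2-\nu_{0}$ obstructions (this is where $\nu\left(\delta_{2}\right)=\nu_{2}-\nu_{0}$, i.e.\ the common factor of the initial parts, enters), while the layers of order $\geq1$ in $x_{1}$ contribute the two terms $\binom{\nu_{i}-1}{2}$. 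Equivalently, in your language one must compute the torsion of $Q$ at the common zeros of $\delta_{1},\delta_{2}$ on $D$ and the image of $H^{0}\left(\Der\left(\log S^{E}\right)\right)$ in $H^{0}\left(Q\right)$; until that is done, formula (2) --- and with it your case (1) for the types $\left(\mathfrak{E}_{d}^{\prime}\right),\left(\mathfrak{O}_{d}^{\prime}\right)$ --- remains unproven.
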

\begin{proof}
The proof of the first equality is in \cite{YoyoBMS}. Below, we only
give a proof of the second equality. Let us consider the standard
system of coordinates defined in a neighborhood of $D$ and introduced
in section \ref{subsection.Generic.value}.

One can compute the cohomology using the associated covering and thus
\begin{equation}
H^{1}\left(D,\Theta_{S}\right)=H^{1}\left(\left\{ U_{1},U_{2}\right\} ,\Theta_{S}\right)=\frac{H^{0}\left(U_{1}\cap U_{2},\Theta_{S}\right)}{H^{0}\left(U_{1},\Theta_{S}\right)\oplus H^{0}\left(U_{2},\Theta_{S}\right)}.\label{Equation.homologique}
\end{equation}

The task is now to describe in coordinates each $H^{0}$ involved
in the quotient above. To deal with $H^{0}\left(U_{1},\Theta_{S}\right)$,
we start with the criterion of Saito 
\begin{equation}
X_{1}\wedge X_{2}=uf.\label{Saito.relation}
\end{equation}
As $\nu_{1}+\nu_{2}=\nu\left(S\right)-1$, blowing-up the criterion
of Saito in the first chart $\left(x_{1},y_{1}\right)$ yields
\[
\underbrace{\frac{E^{\star}X_{1}}{x_{1}^{\nu_{1}-1}}}_{X_{1}^{1}}\wedge\underbrace{\frac{E^{\star}X_{2}}{x_{1}^{\nu_{2}-1}}}_{X_{2}^{1}}=u\circ E\cdot x_{1}^{2}\frac{f\circ E}{x_{1}^{\nu\left(S\right)}},
\]
Let $Y$ be a section of $\Theta_{S}$ on $U_{1}.$ By definition,
there exists $g_{1}\in\mathcal{O}\left(U_{1}\right)$ such that 
\[
Y\wedge X_{1}^{1}=g_{1}x_{1}\frac{f\circ E}{x_{1}^{\nu\left(S\right)}}.
\]
Hence, one has 
\[
\left(x_{1}Y-g_{1}\frac{1}{u\circ E}X_{2}^{1}\right)\wedge X_{1}^{1}=0.
\]
Assume $X_{1}$ is not dicritical. Then, $X_{1}^{1}$ has only isolated
singularities and there exists $h_{1}\in\mathcal{O}\left(U_{1}\right)$
such that 
\[
x_{1}Y=g_{1}\frac{1}{u\circ E}X_{2}^{1}+h_{1}X_{1}^{1}.
\]
If now $X_{1}$ is dicritical, then $\frac{X_{1}^{1}}{x_{1}}$ extends
analytically along $D$ and has only isolated singularities. Therefore,
there still exists $h_{1}\in\mathcal{O}\left(U_{1}\right)$ such that
\[
x_{1}Y=k_{1}\frac{1}{u\circ E}X_{2}^{1}+\frac{h_{1}}{x_{1}}X_{1}^{1}.
\]
Since, $x_{1}Y$ and $X_{2}^{1}$ are tangent to $D,$ $x_{1}$ divides
$h_{1}.$ Thus we get 
\[
H^{0}\left(U_{1},\Theta_{S}\right)=\left\{ \left.Y=\frac{1}{x_{1}}\left(\phi_{1}^{1}X_{1}^{1}+\phi_{2}^{1}X_{2}^{1}\right)\right|\begin{array}{l}
\left(1\right)~\phi_{i}^{1}\in\mathcal{O}\left(U_{1}\right)\\
\left(2\right)\ \textup{\ensuremath{Y} extends analytically along \ensuremath{D}}
\end{array}\right\} .
\]
We now proceed to analyse the second condition highlighted above :
let us write the expansion of $X_{i}$ in homogeneous components 
\[
X_{i}=X_{i}^{\left(\nu_{i}\right)}+X_{i}^{\left(\nu_{i}+1\right)}+\cdots
\]
The relation $\nu_{1}+\nu_{2}=\nu-1$ implies that 
\[
X_{1}^{\left(\nu_{1}\right)}\wedge X_{2}^{\left(\nu_{2}\right)}=0
\]
and we can write 
\[
X_{i}^{\left(\nu_{i}\right)}=\delta_{i}X_{0}
\]
where $X_{0}=\gcd\left(X_{1}^{\left(\nu_{1}\right)},X_{2}^{\left(\nu_{2}\right)}\right)$
and $\delta_{i},i=1,2$ are homogeneous functions such that 
\[
\delta_{1}\wedge\delta_{2}=1.
\]
The expression of $Y$ can be expanded with respect to $x_{1}$ in
\begin{align*}
Y & =\frac{1}{x_{1}}\sum_{i=1,2}\underbrace{\left(\phi_{i}^{1,0}\left(y_{1}\right)+x_{1}\left(\cdots\right)\right)}_{\phi_{i}^{1}}\underbrace{\left(\delta_{i}^{1}X_{0}^{1}+x_{1}\left(\ldots\right)\right)}_{X_{i}^{1}},\qquad\delta_{i}^{1}=\frac{\delta_{i}\circ E}{x_{1}^{\nu\left(\delta_{i}\right)}}
\end{align*}
Thus the condition $Y$ being extendable along $D$ reduces to 
\[
\sum_{i=1,2}\phi_{i}^{1,0}\delta_{i}^{1}=0.
\]
We proceed analogously for the open sets $U_{2}$ and $U_{1}\cap U_{2}$
and obtain the following description where the exponent $2$ refers
to the second chart $\left(x_{2},y_{2}\right)$
\begin{align}
H^{0}\left(U_{1},\Theta_{S}\right) & =\left\{ \left.Y^{1}=\frac{1}{x_{1}}\sum_{i=1,2}\phi_{i}^{1}X_{i}^{1}\right|\begin{array}{l}
\phi_{i}^{1}\in\mathcal{O}\left(U_{1}\right)\\
\sum_{i=1,2}\phi_{i}^{1,0}\delta_{i}^{1}=0
\end{array}\right\} ,\nonumber \\
H^{0}\left(U_{2},\Theta_{S}\right) & =\left\{ \left.Y^{2}=\frac{1}{y_{2}}\sum_{i=1,2}\phi_{i}^{2}X_{i}^{2}\right|\begin{array}{l}
\phi_{i}^{2}\in\mathcal{O}\left(U_{2}\right)\\
\sum_{i=1,2}\phi_{i}^{2,0}\delta_{i}^{2}=0
\end{array}\right\} ,\label{cohomologie.calcul}\\
H^{0}\left(U_{1}\cap U_{2},\Theta_{S}\right) & =\left\{ \left.Y^{12}=\frac{1}{x_{1}}\sum_{i=1,2}\phi_{i}^{12}X_{i}^{1}\right|\begin{array}{l}
\phi_{i}^{12}\in\mathcal{O}\left(U_{1}\cap U_{2}\right)\\
\sum_{i=1,2}\phi_{i}^{12,0}\delta_{i}^{1}=0
\end{array}\right\} .\nonumber 
\end{align}
We may now compute the number of obstructions involved in the cohomological
equation describing the quotient (\ref{Equation.homologique}), namely,
\[
Y^{12}=Y^{2}-Y^{1}.
\]
In view of the description above, the cohomological equation splits
into the system
\[
\phi_{i}^{12}=\frac{\phi_{i}^{2}}{y_{1}^{\nu_{i}}}-\phi_{i}^{1},\ i=1,2
\]
which we filter with respect to $x_{1}$ obtaining
\begin{align}
\phi_{i}^{12,0} & =\frac{\phi_{i}^{2,0}}{y_{1}^{\nu_{i}}}-\phi_{i}^{1,0},\ i=1,2\label{cohomologie.0}\\
\phi_{i}^{12,1} & =\frac{\phi_{i}^{2,1}}{y_{1}^{\nu_{i}}}-\phi_{i}^{1,1},\ i=1,2\label{cohomologie.1}
\end{align}
where 
\[
\phi_{i}^{\star}=\phi_{i}^{\star,0}+x_{1}\phi_{i}^{\star,1},
\]
with $\star=1,\ 2,\ 12$. Let us analyse the system (\ref{cohomologie.0}).
Since the functions $\delta_{i}$ are relatively prime, the conditions
involved in the description of the cohomological spaces (\ref{cohomologie.calcul})
ensures that there exist analytical functions $\dot{\phi}^{\star,0}$
such that 
\[
\phi_{1}^{\star,0}=\dot{\phi}^{\star,0}\delta_{2}^{\star}\textup{ and }\phi_{2}^{\star,0}=-\dot{\phi}^{\star,0}\delta_{1}^{\star}.
\]
for $\star=1,\ 2,\ 12$. Thus, the system (\ref{cohomologie.0}) reduces
to the sole equation
\[
\dot{\phi}^{12,0}=\frac{\dot{\phi}^{2,0}}{y_{1}^{\nu_{1}+\nu\left(\delta_{2}\right)}}-\dot{\phi}^{1,0}.
\]

Writing the Laurent expansions of the above functions yields the relation
\[
\sum_{k\in\mathbb{Z}}\dot{\phi}_{k}^{12}y_{1}^{k}=\sum_{k\in\mathbb{N}}\dot{\phi}_{k}^{2}y_{1}^{-k-\nu_{1}-\nu\left(\delta_{2}\right)}-\sum_{k\in\mathbb{N}}\dot{\phi}_{k}^{1}y_{1}^{k}
\]
which implies $\dot{\phi}_{k}^{12}=0$ for $-\nu_{1}-\nu\left(\delta_{2}\right)+1\leq k\leq-1.$
These conditions provide the sole $-\nu_{1}-\nu\left(\delta_{2}\right)+1$
obstructions to the cohomological equation (\ref{Equation.homologique}).
The system (\ref{cohomologie.1}) involves two independant cohomological
equations. We can proceed analogously to identify $\frac{\left(\nu_{i}-1\right)\left(\nu_{i}-2\right)}{2}$
obstructions $i=1,2$ , respectively, for the equation $i=1,2.$ Finally,
the formula $\left(2\right)$ of the proposition follows from the
relation 
\[
\nu\left(\delta_{2}\right)=\nu_{2}-\nu_{0}.
\]
\end{proof}
Notice that the second case of Proposition \ref{division.saito.process}
may occur when $S$ is of type $\left(\mathfrak{O}_{d}\right)$ or
$\left(\mathfrak{E}_{d}\right)$. In that case, it can be seen that
$\nu\left(X_{1}\right)-\nu_{0}$ is the number of free points of $X_{1}.$

\section{Dimension of the moduli space of a singular regular point.}

In this section, we intend to apply the previous results to compute
the generic dimension of $\moduli$ for
\[
S=\left\{ x^{n}+y^{n}=0\right\} ,
\]
and thus, to recover a classical result due to Granger \cite{Granger}.

To achieve this, we have to identify precisely the topology of the
generic optimal vector field for $S.$ First, we are going to improve
somehow the optimality of the generic optimal vector field studying
when this optimality is preserved after one blowing-up.

\subsection{Optimality after one blowin-up.}
\begin{prop}
\label{optimal.avant.apr=0000E8s}Let $S$ be a generic curve in its
moduli space. Let $c\in D$ a point in the exceptional divisor of
the single blowing-up $E$. Assume that
\begin{quote}
$\left(\star\right)\ $ there exists a germ of regular curve $l$
such that $\left(S^{E}\right)_{c}\cup\left(l^{E}\right)_{c}$ has
no Saito basis of type $\left(\mathfrak{E}_{d}^{\prime}\right)$.
\end{quote}
Then there exists a vector field $X$ optimal for $S$ such that $\left(X^{E}\right)_{c}$
is optimal for $\left(S^{E}\right)_{c}$.
\end{prop}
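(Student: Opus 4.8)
The plan is to start from an adapted Saito basis $\left\{ X_{1},X_{2}\right\}$ of $S$, as furnished by Theorem \ref{SaitoBasisForm}, ordered so that $\nu\left(X_{1}\right)\leq\nu\left(X_{2}\right)$; then $X_{1}$ is optimal for $S$. Since the basis is adapted, Remark \ref{relevement.base.adaptee} lets me blow up the Saito criterion and obtain, at the point $c\in D$, a Saito basis of $\left(S^{E}\right)_{c}$ built from the germs $\left(X_{1}^{E}\right)_{c},\left(X_{2}^{E}\right)_{c}$ — directly whenever one of the two fields is dicritical, and, for the types $\left(\mathfrak{E}\right)$ and $\left(\mathfrak{O}\right)$ where both blown-up fields leave $D$ invariant, after dividing out the local equation $x_{1}=0$ of $D$ by the construction of Proposition \ref{division.saito.process} (here $D$ plays, near $c$, the role of the smooth curve $l$). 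In all cases I end with a Saito basis $\left\{ Y_{1},Y_{2}\right\}$ of $\left(S^{E}\right)_{c}$, $Y_{i}$ coming from $X_{i}^{E}$, so that $\mathfrak{s}\left(\left(S^{E}\right)_{c}\right)=\min\left(\nu\left(Y_{1}\right),\nu\left(Y_{2}\right)\right)$.

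Next I would isolate the single obstruction. Writing a generic optimal combination $X=uX_{1}+vX_{2}$ with $u\left(0\right)\neq0$ and blowing it up, its germ at $c$ equals $u\left(c\right)Y_{1}$ plus a term of valuation strictly larger than $\nu\left(Y_{1}\right)$ (the two generators being lifted with different powers of $x_{1}$ when $\nu\left(X_{1}\right)<\nu\left(X_{2}\right)$). Hence an optimal field of $S$ lifts to an optimal field of $\left(S^{E}\right)_{c}$ precisely when $\nu\left(Y_{1}\right)\leq\nu\left(Y_{2}\right)$. When $\nu\left(X_{1}\right)=\nu\left(X_{2}\right)$ (types $\left(\mathfrak{E}\right)$, $\left(\mathfrak{O}_{d}\right)$) both $X_{1},X_{2}$ are optimal and I simply keep the one with the smaller germ at $c$, so the statement is immediate. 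The only dangerous configuration is $\nu\left(X_{1}\right)<\nu\left(X_{2}\right)$ together with an \emph{inversion at }$c$, namely $\nu\left(Y_{2}\right)<\nu\left(Y_{1}\right)$: a direct valuation count then shows that every optimal field of $S$ blows up to valuation $>\mathfrak{s}\left(\left(S^{E}\right)_{c}\right)$. So the whole proof reduces to excluding this inversion.

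The core of the argument is to rule out the inversion using hypothesis $\left(\star\right)$. Geometrically, an inversion means that the optimal field $X_{1}$ acquires, at the single point $c$, an extra order of vanishing after blowing up, i.e. $c$ is a \emph{free point} of $X_{1}$ in the sense of the subsection on $\left(\mathfrak{E}_{d}^{\prime}\right)$ and $\left(\mathfrak{O}_{d}^{\prime}\right)$; by Theorem \ref{thm:pure.radial} this forces a pure radial, type-$\left(\mathfrak{E}_{d}^{\prime}\right)$-or-$\left(\mathfrak{O}_{d}^{\prime}\right)$ behaviour localized at $c$. I would then feed in the regular curve $l$ supplied by $\left(\star\right)$: passing to $S\cup l$ downstairs, whose blow-up at $c$ is $\left(S^{E}\right)_{c}\cup\left(l^{E}\right)_{c}$, I use the add-a-smooth-curve formula (\ref{ajouter.une.courbe}) and the type-transfer Lemma \ref{Lemme.O.E} together with the gap dichotomy of Remark \ref{rem:EDprime} (a valuation gap $\geq2$ characterizes $\left(\mathfrak{E}_{d}^{\prime}\right)$) and Lemma \ref{lem:0or-1} (controlling $\nu\left(Y_{1}\right)+\nu\left(Y_{2}\right)$). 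The persistence of the inversion through this operation would make $\left(S^{E}\right)_{c}\cup\left(l^{E}\right)_{c}$ of type $\left(\mathfrak{E}_{d}^{\prime}\right)$ for every admissible $l$, contradicting $\left(\star\right)$. With the inversion excluded one gets $\nu\left(Y_{1}\right)\leq\nu\left(Y_{2}\right)$, and the optimal combination of the second paragraph lifts to an optimal field of $\left(S^{E}\right)_{c}$, as desired.

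The hard part will be the bookkeeping and the commutation. On the one hand I must track, across the six types of Theorem \ref{SaitoBasisForm} and according to the dicriticality of the lifted fields \emph{at the specific point} $c$, whether the blow-up yields a Saito basis of $\left(S^{E}\right)_{c}$ or of $\left(S^{E}\cup D\right)_{c}$, and check that removing $D$ by Proposition \ref{division.saito.process} never manufactures a spurious valuation gap. On the other hand I must verify that removing the smooth curve $l$ commutes with the blow-up at $c$ — so that the field obtained downstairs (optimal for $S$) is exactly the blow-up of the field obtained upstairs (optimal for $\left(S^{E}\right)_{c}$), with matching $0$-or-$1$ drops in Saito number on both sides. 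Establishing precisely this translation between a local jump of $X_{1}^{E}$ at $c$ and the global type $\left(\mathfrak{E}_{d}^{\prime}\right)$ of $\left(S^{E}\right)_{c}\cup\left(l^{E}\right)_{c}$, for which the choice of $l$ in $\left(\star\right)$ is exactly calibrated, is where the real work lies, rather than in the valuation estimates themselves.
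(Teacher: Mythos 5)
You have correctly identified the paper's contradiction mechanism — an inversion of valuations at $c$ should produce a Saito basis of $\left(S^{E}\right)_{c}\cup\left(l^{E}\right)_{c}$ with valuation gap at least $2$, hence of type $\left(\mathfrak{E}_{d}^{\prime}\right)$ by Remark \ref{rem:EDprime}, contradicting $\left(\star\right)$ — but your route to it has a genuine gap, and it sits exactly in the step you postpone as ``bookkeeping''. Your reduction in the first two paragraphs breaks down precisely in the main case of the proposition, namely type $\left(\mathfrak{O}\right)$ (both generators non dicritical, unequal valuations). There, dividing $D$ out of the blown-up basis via Proposition \ref{division.saito.process} produces, after the preliminary change of basis that this process requires, a generator of the form $Y_{1}=\frac{1}{x_{1}}\left(X_{1}^{E}-\psi X_{2}^{E}\right)$: it mixes the two lifts, is in general transverse to $D$, and is not the blow-up of any element of $\textup{Der}\left(\log S\right)$ — whereas in this case one may assume every optimal vector field of $S$ is non dicritical (otherwise $S$ admits an equal-valuation basis and the easy case applies), so every lift $\left(X^{E}\right)_{c}$ is tangent to $D$. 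Hence your identity ``$\left(X^{E}\right)_{c}=u\left(c\right)Y_{1}+\left(\textup{higher valuation}\right)$'' and the equivalence ``a lift is optimal iff $\nu\left(Y_{1}\right)\leq\nu\left(Y_{2}\right)$'' are unjustified: the danger is not only an inversion inside your basis, but the possibility that $\mathfrak{s}\left(\left(S^{E}\right)_{c}\right)=\mathfrak{s}\left(\left(S^{E}\cup D\right)_{c}\right)-1$ is attained only by fields which are not lifts at all. Moreover the bridge you propose for the core step does not exist as stated: free points are defined only for dicritical optimal fields, and Theorem \ref{thm:pure.radial} is a global statement about $S$ (existence of a basis of type $\left(\mathfrak{E}_{d}^{\prime}\right)$ or $\left(\mathfrak{O}_{d}^{\prime}\right)$); there is no ``pure radial behaviour localized at $c$'' to invoke.

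The paper closes this gap by reversing your order of operations, and that reversal is what makes your ``commutation'' problem disappear. It first adjoins the curve $l$ of $\left(\star\right)$ \emph{downstairs}, takes an adapted Saito basis of $S\cup l$ — which by Lemma \ref{Lemme.O.E} is of type $\left(\mathfrak{E}\right)$, resp.\ $\left(\mathfrak{O}_{d}\right)$ or $\left(\mathfrak{O}_{d}^{\prime}\right)$ — and divides by $l$ downstairs, obtaining a Saito basis $\left\{ \overline{X}_{1}^{l},X_{2}^{l}\right\} $ of $S$ whose optimal member is calibrated to $l$: the field $X_{2}^{l}$ is tangent to $l$ by construction, and $L\left(\overline{X}_{1}^{l}\right)^{E}$ is tangent to $l^{E}$ for free, $L$ being a local equation of $l^{E}$. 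Only then does one blow up: if $\nu_{c}\left(\left(\overline{X}_{1}^{l}\right)^{E}\right)\geq\nu_{c}\left(\left(X_{2}^{l}\right)^{E}\right)+1$, the family $\left\{ L\left(\overline{X}_{1}^{l}\right)^{E},\left(X_{2}^{l}\right)^{E}\right\} $ is a Saito basis of $\left(S^{E}\cup l^{E}\right)_{c}$ with gap at least $2$, hence of type $\left(\mathfrak{E}_{d}^{\prime}\right)$, the desired contradiction with $\left(\star\right)$; otherwise $X=\overline{X}_{1}^{l}$ answers the proposition. Starting, as you do, from an arbitrary adapted basis of $S$, the optimal field bears no tangency relation to $l$, and the family $\left\{ X_{1}-\phi X_{2},LX_{2}\right\} $ produced by (\ref{ajouter.une.courbe}) does not blow up onto your divided basis $\left\{ Y_{1},Y_{2}\right\} $ in any controlled way; no valuation estimate substitutes for this structural input. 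To complete the argument you should adopt the paper's ordering rather than attempt to prove the commutation you flagged.
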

\begin{proof}
Let $\left\{ X_{1},X_{2}\right\} $ be an adapted Saito basis for
$S.$ If $\nu\left(X_{1}\right)=\nu\left(X_{2}\right)$ which is satisfy
when the basis is of type $\left(\mathfrak{E}\right)$, $\left(\mathfrak{O}_{d}\right)$
then for $\alpha$ and $\beta$ generic one has
\[
\alpha X_{1}^{E}+\beta X_{2}^{E}=\left(\alpha X_{1}+\beta X_{2}\right)^{E}.
\]
According to Remark (\ref{relevement.base.adaptee}), $\left\{ \left(X_{1}^{E}\right)_{c},\left(X_{2}^{E}\right)_{c}\right\} $
is a Saito basis for $\left(S^{E}\right)_{c},$ therefore at $c$
one has 
\[
\nu_{c}\left(\left(\alpha X_{1}+\beta X_{2}\right)^{E}\right)=\mathfrak{s}\left(\left(S^{E}\right)_{c}\right)
\]
Thus, in that case, choosing $X=\alpha Y_{1}+\beta Y_{2}$ yields
the lemma.

Now, assume that $\nu\left(X_{1}\right)<\nu\left(X_{2}\right).$ Suppose
first that $\nu\left(S\right)$ is odd then $S$ is of type $\left(\mathfrak{O}\right).$
Let us consider a curve $l$ satisfying the hypothesis of the lemma.
According to Lemma \ref{Lemme.O.E}, an adapated Saito basis $\left\{ X_{1}^{l},X_{2}^{l}\right\} $
is of type $\left(\mathfrak{E}\right)$ with 
\[
\text{\ensuremath{\nu\left(X_{1}^{l}\right)=\nu\left(X_{2}^{l}\right)=\frac{\nu\left(S\right)+1}{2}}}.
\]
Applying the process of division, we are lead to an adapted Saito
basis $\left\{ \overline{X}_{1}^{l},X_{2}^{l}\right\} $ for $S$
with 
\begin{equation}
\nu\left(\overline{X}_{1}^{l}\right)=\frac{\nu\left(S\right)-1}{2}<\nu\left(X_{2}^{l}\right)=\frac{\nu\left(S\right)+1}{2}\label{equation.premiere.inegalite}
\end{equation}
The blow-up family $\left\{ \left(\overline{X}_{1}^{l}\right)_{c}^{E},\left(X_{2}^{l}\right)_{c}^{E}\right\} $
is a basis for $\left(S^{E}\right)_{c}.$ Now, suppose that 
\[
\nu_{c}\left(\left(\overline{X}_{1}^{l}\right)^{E}\right)\geq\nu_{c}\left(\left(X_{2}^{l}\right)^{E}\right)+1
\]
therefore,
\[
\nu_{c}\left(L\left(\overline{X}_{1}^{l}\right)^{E}\right)\geq\nu_{c}\left(\left(X_{2}^{l}\right)^{E}\right)+2
\]
where $L$ is a local equation of $l^{E}.$ The family $\left\{ L\left(\overline{X}_{1}^{l}\right)^{E},\left(X_{2}^{l}\right)^{E}\right\} $
is a Saito basis for $S^{E}\cup l^{E}$ at $c$ and following Remark
\ref{rem:EDprime} it is of type $\left(\mathfrak{E}_{d}^{\prime}\right)$.
That is impossible. Hence, 
\begin{equation}
\nu_{c}\left(\left(\overline{X}_{1}^{l}\right)^{E}\right)\leq\nu_{c}\left(\left(X_{2}^{l}\right)^{E}\right)\label{equation.seconde.inegalite}
\end{equation}
and, according to (\ref{equation.premiere.inegalite}) and (\ref{equation.seconde.inegalite}),
$X=\overline{X}_{1}^{l}$ satisfies the conclusion of the lemma. Finally,
if $\nu\left(S\right)$ is even then $S$ is of type $\left(\mathfrak{E}_{d}\right)$.
Therefore, $S\cup l$ is of type $\left(\mathfrak{O}_{d}\right)$
and the arguments are similar.
\end{proof}
\begin{cor}
\label{optimal.avant.apres.coro} If any component of $S^{E}$ satisfies
the hypothesis $\left(\star\right)$ of Proposition \ref{optimal.avant.apr=0000E8s},
then there exists a vector field $X$ optimal for $S$ such that,
for any $c,$ $\left(X^{E}\right)_{c}$ is optimal for $\left(S^{E}\right)_{c}$.
\end{cor}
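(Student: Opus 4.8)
The plan is to promote the pointwise existence statement of Proposition~\ref{optimal.avant.apr=0000E8s} to a \emph{simultaneous} one by a finite–dimensional genericity argument. First I would fix an adapted Saito basis $\left\{ X_{1},X_{2}\right\} $ of $S$, whose existence is granted by Theorem~\ref{SaitoBasisForm}, and reduce the claim to the finitely many points $c_{1},\dots,c_{k}$ at which the components of $S^{E}$ meet $D$; this is exactly the set to which the hypothesis attaches the condition $\left(\star\right)$, so each $c_{j}$ satisfies $\left(\star\right)$. The guiding remark is that for every $X\in\Der\left(\log S\right)$ the blown–up field $\left(X^{E}\right)_{c_{j}}$ lies in $\Der\left(\log\left(S^{E}\right)_{c_{j}}\right)$, whence $\nu_{c_{j}}\left(X^{E}\right)\geq\mathfrak{s}\left(\left(S^{E}\right)_{c_{j}}\right)$, and the desired optimality at $c_{j}$ is precisely the equality. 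The point is that this equality is an \emph{open} condition: its failure is the vanishing of finitely many jet coefficients of $X^{E}$ at $c_{j}$, which depend polynomially on the parameter we are allowed to move.

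I would then split according to the valuations of the adapted basis. In the balanced situation $\nu\left(X_{1}\right)=\nu\left(X_{2}\right)$ (types $\left(\mathfrak{E}\right)$ and $\left(\mathfrak{O}_{d}\right)$) the blow–up commutes with constant combinations, $\left(\alpha X_{1}+\beta X_{2}\right)^{E}=\alpha X_{1}^{E}+\beta X_{2}^{E}$, and Remark~\ref{relevement.base.adaptee} says that $\left\{ \left(X_{1}^{E}\right)_{c_{j}},\left(X_{2}^{E}\right)_{c_{j}}\right\} $ is a Saito basis of $\left(S^{E}\right)_{c_{j}}$. Since a generic constant combination of a Saito basis is optimal, for each $j$ the bad parameters form a proper Zariski–closed subset of $\mathbb{P}^{1}$; the finite union over $j$ is still proper, so any $\left(\alpha:\beta\right)$ outside it yields one field $X=\alpha X_{1}+\beta X_{2}$ that works at every $c_{j}$. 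This case needs only the genericity of $S$ and not $\left(\star\right)$.

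The genuinely delicate case is $\nu\left(X_{1}\right)<\nu\left(X_{2}\right)$, where every optimal field equals $uX_{1}+bX_{2}$ with $u$ a unit, hence up to scaling $X=X_{1}+hX_{2}$ for some $h\in\mathcal{O}$, and all of these share the same initial part $X_{1}^{\left(\nu\left(X_{1}\right)\right)}$; what distinguishes them at $c_{j}$ is carried only by the higher jets governed by $h$, through $\left(X_{1}+hX_{2}\right)^{E}=X_{1}^{E}+\left(h\circ E\right)x_{1}^{\nu\left(X_{2}\right)-\nu\left(X_{1}\right)}X_{2}^{E}$. Here I would work in a finite jet space $J^{N}\!\left(\mathcal{O}\right)\cong\mathbb{C}^{M}$, with $N$ large enough that the jet of $X^{E}$ at each $c_{j}$ up to order $\mathfrak{s}\left(\left(S^{E}\right)_{c_{j}}\right)$ depends only on the $N$–jet of $h$; then the non–optimal locus $B_{j}\subset\mathbb{C}^{M}$ is Zariski–closed, and Proposition~\ref{optimal.avant.apr=0000E8s}, applied at $c_{j}$ with the curve furnished by $\left(\star\right)$, exhibits one admissible $h$, so $B_{j}$ is \emph{proper}. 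As $\mathbb{C}^{M}$ is irreducible, the complement of $\bigcup_{j}B_{j}$ is a non–empty Zariski–open set, and any $h$ in it produces the required $X=X_{1}+hX_{2}$, still optimal for $S$ because it keeps the initial part of $X_{1}$. I expect the main obstacle to be the bookkeeping of this last case: one must verify that optimality of $\left(X^{E}\right)_{c_{j}}$ is genuinely a finite–jet condition on $h$ (which rests on the finiteness of $\mathfrak{s}\left(\left(S^{E}\right)_{c_{j}}\right)$ and on the explicit blow–up formula above) and that the properness of each $B_{j}$ is exactly what Proposition~\ref{optimal.avant.apr=0000E8s} delivers.
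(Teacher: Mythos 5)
Your argument is correct, and it rests on the same two ingredients as the paper's proof: Proposition \ref{optimal.avant.apr=0000E8s} applied at each of the finitely many points of $S^{E}\cap D$, and a genericity argument inside a finite-dimensional family of optimal vector fields. The implementation, however, is genuinely different. The paper's proof is a one-liner: for each tangent-cone point $c$ it takes the witness $X_{c}$ furnished by the Proposition and forms $X=\sum_{c}\alpha_{c}X_{c}$ with generic constants; since every $X_{c}$ is optimal for $S$ and every $\left(X_{c'}^{E}\right)_{c}$ is tangent to $\left(S^{E}\right)_{c}$, a generic choice of the $\alpha_{c}$ gives simultaneously $\nu\left(X\right)=\mathfrak{s}\left(S\right)$ and $\nu_{c}\left(X^{E}\right)=\min_{c'}\nu_{c}\left(X_{c'}^{E}\right)=\mathfrak{s}\left(\left(S^{E}\right)_{c}\right)$ at every $c$. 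You never combine the witnesses; instead you parametrize the whole set of optimal fields (the pencil $\alpha X_{1}+\beta X_{2}$ in the balanced case, the affine family $X_{1}+hX_{2}$ through finite jets of $h$ in the unbalanced case) and invoke the Proposition only to certify that each bad locus $B_{j}$ is a \emph{proper} Zariski-closed subset. This is longer, but it buys two things the paper leaves implicit: the failure of optimality at $c_{j}$ is a finite, affine-linear (hence closed) condition on the jets of $h$; and inside your family the initial part, hence the power of $x_{1}$ divided out under blow-up, is constant, which sidesteps the question of whether $\left(\sum_{c}\alpha_{c}X_{c}\right)^{E}=\sum_{c}\alpha_{c}X_{c}^{E}$ when the witnesses $X_{c}$ do not all share the same dicritical type. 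You also observe, correctly, that $\left(\star\right)$ is needed only in the unbalanced case. One slip to record: in $\left(X_{1}+hX_{2}\right)^{E}=X_{1}^{E}+\left(h\circ E\right)x_{1}^{N}X_{2}^{E}$ the exponent is $N=\nu\left(X_{2}\right)-\nu\left(X_{1}\right)$ only when $X_{1}$ and $X_{2}$ are both dicritical or both non-dicritical (types $\left(\mathfrak{O}\right)$ and $\left(\mathfrak{E}_{d}\right)$); for types $\left(\mathfrak{O}_{d}^{\prime}\right)$ and $\left(\mathfrak{E}_{d}^{\prime}\right)$, where $X_{1}$ is dicritical and $X_{2}$ is not, one has $N=\nu\left(X_{2}\right)-\nu\left(X_{1}\right)-1$. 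Since your argument uses only that $N$ is a fixed non-negative integer, nothing breaks.
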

\begin{proof}
Indeed, for any point $c$ in the tangent cone of $S,$ consider $X_{c}$
given by Proposition \ref{optimal.avant.apr=0000E8s} for the curve
$\left(S^{E}\right)_{c}$. Then for a generic family of complex numbers
$\left\{ \alpha_{c}\right\} $, the vector field
\[
X=\sum\alpha_{c}X_{c}
\]
satisfies the property.
\end{proof}

\subsection{Dimension of $\protect\moduli$ where $S=\left\{ x^{n}+y^{n}=0\right\} $}

The curve $S$ is desingularized by a single blowing-up. From \cite{MatQuasi},
the generic dimension of $\moduli$ is equal to 
\[
\dim H^{1}\left(D,\Theta_{S}\right).
\]
Following Proposition \ref{dimension.coho}, it can be computed from
some topological data associated to an adapted basis of Saito for
$S.$ Below, we are going to describe these bases according to the
value of $n.$

If $n=3$ then there are coordinates $\left(x,y\right)$ in which
$S=\left\{ f=xy\left(x+y\right)=0\right\} .$ The family 
\[
\left\{ X_{1}=x\partial_{x}+y\partial_{y},\ X_{2}=\sharp df=\partial_{x}f\partial_{y}-\partial_{y}f\partial_{x}\right\} 
\]
is a Saito basis for $S.$ Since $\nu\left(X_{1}\right)=\nu\left(X_{2}\right)-1=1$,
$X_{1}$ is dicritical but not $X_{2}$, $S$ is of type $\left(\mathfrak{O}_{d}^{\prime}\right).$
If $n=4$ then there are coordinates $\left(x,y\right)$ in which
$S=\left\{ f=xy\left(x+y\right)\left(x+ay\right)=0\right\} $ for
some $a\notin\left\{ 0,1\right\} .$ Hence, the family 
\[
\left\{ X_{1}=x\partial_{x}+y\partial_{y},\ X_{2}=\sharp df\right\} 
\]
is a Saito basis for $S.$ Since $\nu\left(X_{1}\right)=\nu\left(X_{2}\right)-2=1$,
$X_{1}$ is dicritical but not $X_{2}$, $S$ is of type $\left(\mathfrak{E}_{d}^{\prime}\right).$
In the latter case, the dimension of $\moduli$ is $1.$

Now suppose $n\geq5.$
\begin{prop}
\label{thm.one.blowing.up}The curve $S$ is of type $\left(\mathfrak{O}_{d}\right)$
or $\left(\mathfrak{E}_{d}\right)$. Moreover, the generic optimal
vector $X_{1}$ is completely regular after a single blowing-up and
has $\left\lceil \frac{n}{2}\right\rceil -2$ free points.
\end{prop}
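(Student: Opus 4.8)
The plan is to combine three ingredients: an elementary control of the homogeneous vector fields tangent to the tangent cone of $S$, which forces radiality; the optimality-after-blowing-up statement of Corollary~\ref{optimal.avant.apres.coro}, which pushes the tangencies of the optimal field away from $S^{E}$; and finally the dichotomy of Theorem~\ref{thm:pure.radial} to discard the primed types.

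First I would fix a generic $S$ in the topological class of $n$ pairwise transversal smooth branches, so that its tangent cone is a product $F=\prod_{i=1}^{n}L_{i}$ of $n$ distinct linear forms and $\nu\left(S\right)=n$; a single blowing-up $E$ resolves $S$, and $S^{E}$ consists of $n$ disjoint smooth curves meeting $D$ transversally, whence $\textup{Tan}\left(S^{E},D\right)=\emptyset$. Let $X$ be any optimal vector field. Taking the lowest-degree part of a relation $X\cdot f=gf$ shows that the initial form $X^{\left(\mathfrak{s}\left(S\right)\right)}$ lies in $\Der\left(\log F\right)$. Since $F$ is homogeneous and reduced, $\Der\left(\log F\right)$ is freely generated by the radial field $x\partial_{x}+y\partial_{y}$ of valuation $1$ and by $\sharp dF$ of valuation $n-1$; comparing valuations, every homogeneous element of $\Der\left(\log F\right)$ of valuation at most $n-2$ is a multiple of the radial field. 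As $\mathfrak{s}\left(S\right)\leq\nu\left(S\right)/2\leq n-2$ for $n\geq5$ by (\ref{saito.moitie.de.la.courbe}), the form $X^{\left(\mathfrak{s}\left(S\right)\right)}$ is a multiple of the radial field, hence $X$ is dicritical. Thus every optimal field is dicritical and $S$ is of radial type. By Theorem~\ref{SaitoBasisForm} an adapted basis of $S$ has one of the listed forms, and since the non-dicritical types $\left(\mathfrak{E}\right)$ and $\left(\mathfrak{O}\right)$ would exhibit a non-dicritical optimal field, the type is dicritical; together with Theorem~\ref{thm:LowerGenericBound} this forces $\mathfrak{s}\left(S\right)=\left\lceil n/2\right\rceil -1$ and confines the type to $\left(\mathfrak{O}_{d}\right),\left(\mathfrak{O}_{d}^{\prime}\right)$ when $n$ is odd and $\left(\mathfrak{E}_{d}\right),\left(\mathfrak{E}_{d}^{\prime}\right)$ when $n$ is even.

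Next I would apply Corollary~\ref{optimal.avant.apres.coro}. Each germ $\left(S^{E}\right)_{c}$ is a single smooth branch, and adjoining the strict transform of a regular curve produces at worst two transversal smooth branches, a configuration of multiplicity two which has no Saito basis of type $\left(\mathfrak{E}_{d}^{\prime}\right)$; hence hypothesis $\left(\star\right)$ holds for every component of $S^{E}$. The corollary then yields an optimal $X_{1}$ for $S$ whose blow-up $\left(X_{1}^{E}\right)_{c}$ is optimal, that is nonsingular, at each point $c\in S^{E}\cap D$. Being nonsingular and tangent to the smooth branch $\left(S^{E}\right)_{c}$, which is transverse to $D$, the field $X_{1}^{E}$ is itself transverse to $D$ at $c$; in particular no point of $S^{E}\cap D$ lies in $\textup{Tan}\left(X_{1}^{E},D\right)$. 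Reading the tangency locus from the initial form, I would write $X_{1}^{\left(\nu_{1}\right)}=R\left(x\partial_{x}+y\partial_{y}\right)$ with $R$ homogeneous of degree $\nu_{1}-1=\left\lceil n/2\right\rceil -2$; a local computation in the chart $E\left(x_{1},y_{1}\right)=\left(x_{1},x_{1}y_{1}\right)$ identifies $\textup{Tan}\left(X_{1}^{E},D\right)$ with the zero divisor $Z\left(R\right)\subset D\cong\mathbb{P}^{1}$, of degree $\left\lceil n/2\right\rceil -2$. By the previous observation this divisor avoids $S^{E}\cap D$, so all its points are free points of $X_{1}$, and $X_{1}$ has exactly $\left\lceil n/2\right\rceil -2$ free points. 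Since $n\geq5$ this number is positive, so $S$ is not of pure radial type, and Theorem~\ref{thm:pure.radial} excludes the types $\left(\mathfrak{O}_{d}^{\prime}\right)$ and $\left(\mathfrak{E}_{d}^{\prime}\right)$; combined with the radiality already established, $S$ is of type $\left(\mathfrak{O}_{d}\right)$ or $\left(\mathfrak{E}_{d}\right)$, which is the first assertion.

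The main obstacle I anticipate is the last assertion, complete regularity of the generic optimal $X_{1}$ after one blowing-up. Avoidance of $S^{E}\cap D$ by the tangencies is handled cleanly above, but one still has to show that for $X_{1}$ generic among optimal fields the form $R$ has simple roots and that the $D$-tangential component of $X_{1}^{E}$ does not vanish at these roots, so that $X_{1}^{E}$ has no singularity on $D$. This nondegeneracy genuinely requires a genericity argument on the miniversal deformation: one must verify that the loci where $R$ acquires a multiple root, or where $X_{1}^{E}$ becomes singular on $D$, form proper analytic subsets, which is precisely where the hypothesis that $S$ be generic in $\moduli$ is used; the flat Saito basis of Theorem~\ref{Flat.saito.basis} is what makes such a genericity statement meaningful.
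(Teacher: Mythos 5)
Your route to dicriticality and to the type classification is correct but genuinely different from the paper's. You read dicriticality off the tangent cone: the initial form of any $X\in\Der\left(\log S\right)$ lies in $\Der\left(\log F\right)$, where $F$ is a product of $n$ distinct linear forms, and since $\left\{ x\partial_{x}+y\partial_{y},\sharp dF\right\} $ is a free basis of $\Der\left(\log F\right)$ (Saito's criterion plus Euler's relation), every homogeneous logarithmic field of degree at most $n-2$ is a multiple of the radial field. The paper instead assumes the optimal field non-dicritical and derives a contradiction from Hertling's index formula $\nu\left(X_{1}\right)+1=\sum_{c\in D}\textup{ind}\left(X_{1}^{E},D,c\right)$, each of the $n$ points of the tangent cone contributing at least $1$ while $\nu\left(X_{1}\right)\leq n/2$. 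Your version is more elementary and self-contained. The remaining steps you take --- hypothesis $\left(\star\right)$ via the multiplicity-two observation, optimality after blowing-up from Corollary \ref{optimal.avant.apres.coro}, exclusion of $\left(\mathfrak{E}_{d}^{\prime}\right)$ and $\left(\mathfrak{O}_{d}^{\prime}\right)$ through Theorem \ref{thm:pure.radial}, and the count of free points as the degree of $Z\left(R\right)\subset D$ --- parallel the paper, which obtains the same count from the tangency formula $\nu\left(X_{1}\right)+1=2+\sum_{c\in D}\textup{tan}\left(X_{1}^{E},D,c\right)$, i.e.\ with the same multiplicities as your divisor $Z\left(R\right)$.

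There is, however, a genuine gap: the assertion that $X_{1}^{E}$ is completely regular after one blowing-up is never proved. You defer it to a genericity argument on the miniversal deformation (simple roots of $R$, nonvanishing of the $D$-tangential component at those roots) which you do not carry out --- and which is in fact unnecessary. The paper closes this point with a direct argument using nothing beyond what you already have: take $Y$ so that $\left\{ X_{1},Y\right\} $ is an adapted Saito basis and blow up the Saito criterion, obtaining
\[
X_{1}^{E}\wedge Y^{E}=u\circ E\cdot\frac{f\circ E}{x_{1}^{n}}.
\]
Away from the tangent cone of $S$ the function $\frac{f\circ E}{x_{1}^{n}}$ is a unit, so $X_{1}^{E}\wedge Y^{E}$ is a unit there and $X_{1}^{E}$ cannot vanish; in particular $X_{1}^{E}$ is nonsingular at every point of $Z\left(R\right)$, whether or not $R$ has multiple roots. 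At the points of the tangent cone, regularity is exactly the optimality-after-blowing-up you already obtained. Note also that your anticipated obstacle conflates two distinct issues: nonsingularity of $X_{1}^{E}$ along $D$ requires only that the $\partial_{y_{1}}$-component not vanish at the zeros of $R$, which the wedge relation gives for free, whereas simplicity of the roots of $R$ would only affect whether the $\left\lceil \frac{n}{2}\right\rceil -2$ free points are pairwise distinct --- a count the paper takes with tangency orders in any case. So the genericity of $S$ in $\moduli$ enters only through Theorems \ref{Flat.saito.basis}, \ref{thm:LowerGenericBound}, \ref{SaitoBasisForm} and Corollary \ref{optimal.avant.apres.coro}, not through any further nondegeneracy of $R$.
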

\begin{proof}
Following notations introduced in \cite[p. 657]{hertlingfor} for
a germ at $p$ of vector field $X$ and a germ of curve $S$ given
in coordinates by 
\[
X=a\left(x,y\right)\partial_{x}+b\left(x,y\right)\partial_{y}\qquad S=\left\{ x=0\right\} 
\]
 we recall the following definitions :
\begin{enumerate}
\item if $S$ is invariant by $X,$ the integer $\nu_{y}\left(b\left(0,y\right)\right)$
is called \emph{the index} of $X$ at $p$ with respect to $S$ and
it is denoted by 
\[
\textup{ind}\left(X,S,p\right).
\]
\item if $S$ is not invariant by $X,$ the integer $\nu_{y}\left(a\left(0,y\right)\right)$
is called \emph{the tangency order} of $X$ at $p$ with respect to
$S$ and it is denoted by
\[
\textup{tan}\left(X,S,p\right).
\]
\end{enumerate}
Suppose $X_{1}$ non dicritical, then according to \cite[Lemma 1]{hertlingfor},
one has
\begin{equation}
\nu\left(X_{1}\right)+1=\sum_{c\in D}\textup{ind}\left(X_{1}^{E},D,c\right).\label{equation.index}
\end{equation}
For any point $c$ in the tangent cone of $S$, the curve $\left(S^{E}\cup D\right)_{c}$
is a union of two transversal smooth curves. Therefore, the index
$\textup{ind}\left(X_{1}^{E},D,c\right)$ is at least $1$ since $\left(X_{1}^{E}\right)_{c}$
is singular. Therefore, one has 
\begin{equation}
\sum_{c\in D}\textup{ind}\left(X_{1}^{E},D,c\right)\geq\sharp\textup{tangent cone}=n.\label{inequality.index}
\end{equation}
On the other hand, the optimality of $X_{1}$ ensures that 
\begin{equation}
\nu\left(X_{1}\right)\leq\frac{n}{2}.\label{inequality.multiplicity}
\end{equation}
The equality \ref{equation.index} and the inequalities (\ref{inequality.index})
and (\ref{inequality.multiplicity}) are incompatible with $n\geq5$,
and thus $X_{1}$ is dicritical. Any component of $S^{E}$ is a regular
curve. Since the union of two curves is not of type $\left(\mathfrak{E}_{d}^{\prime}\right)$,
any component of $S^{E}$ satisfies the hypothesis $\left(\star\right)$
of Propostion \ref{optimal.avant.apr=0000E8s}. As a consequence,
we can consider $X_{1}$ to be not only optimal for $S$ but also
optimal after one blowing-up. Since any component of $S^{E}$ are
regular curve, whose Saito number are equal to $0,$ the vector field
$X_{1}^{E}$ is regular at the tangent cone of $S.$ Moreover, there
exists $Y$ such that $\left\{ X_{1},Y\right\} $ is an adapted Saito
basis. Thus, after one blowing-up, one can write 
\[
X_{1}^{E}\wedge Y^{E}=u\circ E\frac{f\circ E}{x_{1}^{n}}
\]
where $f=x^{n}+y^{n}$. Since out of the tangent cone of $S,$ the
function $\frac{f\circ E}{x_{1}^{n}}$ is a unit, $X_{1}^{E}$ is
finally regular at any point of $D$. 

Following again \cite[Lemma 1]{hertlingfor}, one has 
\[
\nu\left(X_{1}\right)+1=\left\lceil \frac{n}{2}\right\rceil =2+\sum_{c\in D}\textup{tan}\left(X_{1}^{E},D,c\right).
\]
The above relation concludes the proof of the proposition : $\textup{Tan}\left(S^{E},D\right)$
being empty, any tangency point between $X_{1}^{E}$ and $D$ is a
free point. 
\end{proof}
As a consequence of Proposition \ref{thm.one.blowing.up}, we recover
a classical result of Granger concerning the generic dimension of
the moduli space of $S$ \cite{Granger}. According to Theorem \ref{thm.one.blowing.up},
the Saito basis of $S$ satisfies 
\[
\nu\left(X_{1}\right)=\left\{ \begin{array}{ll}
\frac{n}{2}-1 & \textup{if \ensuremath{n} is even}\\
\frac{n-1}{2} & \textup{else}
\end{array}\right.\textup{ and }\nu\left(X_{2}\right)=\left\{ \begin{array}{ll}
\frac{n}{2} & \textup{if \ensuremath{n} is even}\\
\frac{n-1}{2} & \textup{else}
\end{array}\right..
\]
Moreover, by construction, the integer $\nu_{0}$ identified in Proposition
\ref{dimension.coho} satisfies 
\begin{align*}
\nu_{0} & =\nu\left(X_{1}\right)-\left(\textup{number of free points}\right)\\
 & =\left\{ \begin{array}{ll}
\frac{n}{2}-1 & \textup{if \ensuremath{n} is even}\\
\frac{n-1}{2} & \textup{else}
\end{array}\right.-\left(\left\lceil \frac{n}{2}\right\rceil -2\right)=1.
\end{align*}
Now, following Propostion \ref{dimension.coho}, the dimension of
$\moduli$ is equal to 
\[
{\displaystyle \left\{ \begin{array}{rlc}
\frac{1}{2}\left(\frac{n}{2}-2\right)\left(\frac{n}{2}-3\right)+\frac{1}{2}\left(\frac{n}{2}-1\right)\left(\frac{n}{2}-2\right)+n-3 & =\frac{\left(n-2\right)^{2}}{4} & \textup{ if \ensuremath{n} is even}\\
\left(\frac{n-1}{2}-1\right)\left(\frac{n-1}{2}-2\right)+n-3 & =\frac{\left(n-1\right)\left(n-3\right)}{4} & \textup{ if \ensuremath{n} is odd}
\end{array}\right.}
\]
which coincides with the results in \cite{Granger}.

\bibliographystyle{plain}
\bibliography{Bibliographie}

Genzmer, Yohann \\
yohann.genzmer@math.univ-toulouse.fr
\end{document}